\newcommand {\N} {\mathbb N}
\newcommand{\ga}{\gamma}
\newcommand{\la}{\lambda}
\newcommand{\de}{\delta}
\newcommand {\gr} {{\rm gph}}
\newcommand {\gph} {{\rm gph}}
\newcommand{\bx}{\bar x}
\newcommand{\by}{\bar y}
\newcommand{\bz}{\bar z}
\newcommand{\iv}{^{-1} }
\newcommand {\bd} {{\rm bd}\,}
\newcommand {\cone} {{\rm cone}\,}
\newcommand {\Limsup} {\mathop{{\rm Lim\,sup}\,}}
\newcommand {\Liminf} {\mathop{{\rm Lim\,inf}\,}}
\def\usc{upper semicontinuous}
\def\RHS{right-hand side}
\def\SVM{set-valued mapping}
\renewcommand{\ldots}{\dotsc}
\newcommand{\eps}{\varepsilon}
\newcommand {\R} {\mathbb R}
\newcommand {\dom} {{\rm dom}\,}
\newcommand {\Int} {{\rm int}\,}
\newtheorem{remark}[theorem]{\textit{Remark}}
\title{AN INDUCTION THEOREM AND NONLINEAR REGULARITY MODELS\thanks{The research was supported by the Australian Research Council, project DP110102011 and the National Foundation for Science and Technology Development (NAFOSTED) of Vietnam, project 101.01-2014.62.}}
\author{PHAN Q. KHANH\thanks{Department of Mathematics, International University, Vietnam National University Hochiminh City and Centre for Informatics and Applied Optimization, Faculty of Science and Technology, Federation University Australia (pqkhanh@hcmiu.edu.vn).}
\and ALEXANDER Y. KRUGER\thanks{Centre for Informatics and Applied
Optimization, Faculty of Science and Technology, Federation University Australia (a.kruger@federation.edu.au).}
\and NGUYEN H. THAO\thanks{Centre for Informatics and Applied Optimization, Faculty of Science and Technology, Federation University Australia (hieuthaonguyen@students.federation.edu.au) and
Institute for Numerical and Applied Mathematics, University of G\"ottingen (hieu-thao.nguyen@mathematik.uni-goettingen.de).}
}
\begin{document}
\maketitle
\newcommand{\slugmaster}{%
\slugger{siopt}{xxxx}{xx}{x}{}}

\begin{abstract}
A general nonlinear regularity model for a set-valued mapping $F:X\times\R_+\rightrightarrows Y$, where $X$ and $Y$ are metric spaces, is studied using special iteration procedures, going back to Banach, Schauder, Lyusternik and Graves.
Namely, we revise the \emph{induction theorem} from Khanh, \emph{J. Math. Anal. Appl.}, 118 (1986) and employ it to obtain basic estimates for exploring regularity/openness properties.
We also show that it can serve as a substitution of the Ekeland variational principle when establishing other regularity criteria.
Then, we apply the induction theorem and the mentioned estimates to establish criteria for both global and local versions of regularity/openness properties for our model and demonstrate how the definitions and criteria translate into the conventional setting of a set-valued mapping $F:X\rightrightarrows Y$.
An application to second-order necessary optimality conditions for a nonsmooth set-valued optimization problem with mixed constraints is provided.
\end{abstract}

\begin{keywords}
Metric regularity, induction theorem, Ekeland variational principle, optimality conditions
\end{keywords}

\begin{AMS}
47H04, 49J53, 90C31
\end{AMS}

\pagestyle{myheadings}
\thispagestyle{plain}
\markboth{PHAN Q. KHANH, ALEXANDER Y. KRUGER, NGUYEN H. THAO}{INDUCTION THEOREM AND NONLINEAR REGULARITY MODELS}

\section{Introduction}

Regularity properties of set-valued mappings lie at the core of variational analysis because of their importance for establishing stability of solutions to generalized equations (initiated by Robinson \cite{Rob76,Rob76.2} in the 1970s), optimization and variational problems, constraint qualifications, qualification conditions in coderivative/subdifferential calculus and convergence rates of numerical algorithms; cf. books and surveys \cite{DonRoc14,Mor06.1,AubFra90,BorZhu05, Iof00_,Iof10,Iof13,Pen13,RocWet98,Aze06,KlaKum02} and the references therein.

Among the variety of known regularity properties, the most recognized and widely used one is that of \emph{metric regularity}; cf. \cite{Aze06,Bor86,BorZhu05, Iof00_, KlaKum02, Pen89,Pen13, RocWet98, Mor06.1, DonRoc14}.
Recall that a set-valued mapping $F:X\rightrightarrows{Y}$ between metric spaces is (locally) metrically regular at a point
$(\bar{x},\bar{y})$ in its graph $\gph{F}:=\{(x,y)\in{X}\times{Y}\mid y\in{F}(x)\}$
with modulus $\kappa>0$ if
\begin{equation}\label{met_reg}
d(x,F^{-1}(y))\le\kappa d(y,F(x))
\quad\mbox{for all } x \mbox{ near } \bx,\;y \mbox{ near } \by.
\end{equation}
Here $F\iv:Y\rightrightarrows{X}$ is the \emph{inverse} mapping defined by $F\iv(y)=\{x\in X\mid y\in F(x)\}$.
The roots of this notion can be traced back to the classical Banach-Schauder \emph{open mapping theorem} and its subsequent generalization to nonlinear mappings known as \emph{Lyusternik-Graves theorem}, see the survey \cite{Iof00_} by Ioffe.

Inequality \eqref{met_reg} provides a linear \emph{error bound} estimate of metric type for the distance from $x$ to the solution set of the generalized equation $F(u)\ni y$ corresponding to the perturbed \RHS\ $y$ in a neighbourhood of the solution $\bx$ (corresponding to the \RHS\ $\by$).
Metric regularity is known to be equivalent to two other fundamental properties: the \emph{openness} (or \emph{covering}) at a linear rate and the \emph{Aubin property} (a kind of Lipschitz-like behaviour) of the inverse mapping;  cf. \cite{DmiMilOsm80_, Kru88, Mor06.1,AubEke84, RocWet98,Pen89,Iof00_,DonLewRoc03,DonRoc14, BorZhuang88,Pen13,KlaKum02}.
Several qualitative and quantitative characterizations of the metric regularity property have been established in terms of various primal and dual space derivative-like objects:
\emph{slopes}, \emph{graphical derivatives} (\emph{Aubin criterion}), \emph{subdifferentials} and \emph{code\-rivatives}; cf.~\cite{Iof00_, Kru88, Mor06.1,Aze06,AubFra90,RocWet98,DonRoc14, KlaKum02,Kum99,Pen13,NgaKruThe12}.

There have been many important developments of the metric regularity theory in recent years; among them
clarifying the connection of the metric regularity \emph{modulus} (the infimum of all $\kappa$ such that \eqref{met_reg} holds) to the \emph{radius of metric regularity}, cf.~\cite{DonLewRoc03,DonRoc14,CanGomPar08,Lop12,Mor06.1, GerMorNam09,Ude12.1}, and the interpretation of the regularity of the subdifferential mapping via second-order growth conditions, cf.~\cite{ArtGeo08,DruLew13,DruIof,LewZha13,ZheNg15}.

At the same time, it has been well recognized that many important variational problems do not posses conventional metric regularity.
This observation has led to a significant grows of attention to more subtle regularity properties.
This new development has mostly consisted in the relaxing or extension of the metric regularity property \eqref{met_reg} (and the other two equivalent properties) and its characterizations along the following three directions (and their appropriate combinations).

1) Relaxing of property \eqref{met_reg} by fixing one of the variables: either $y=\by$ or $x=\bx$ in it.
In the first case, one arrives at the very important for applications property of $F$ known as \emph{metric subregularity} (and respectively \emph{calmness} of $F\iv$); cf. \cite{Cla83, DonRoc14,HenJouOut02, HenOut01, Iof00_,IofOut08,ZheNg10,Kru15,Kru15.2,ApeDurStr13,DonRoc04}, while fixing the other variable (and usually also replacing $d(y,F(\bx))$ with $d(y,\by)$) leads to another type of relaxed regularity known as \emph{metric semiregularity} \cite{Kru09.1} (also referred to as \emph{metric hemiregularity} in \cite{ArtMor11}).

2) Considering nonlocal versions of \eqref{met_reg}, when $x$ and $y$ are restricted to certain subsets $U\subset X$ and $V\subset Y$, not necessarily neighbourhoods of $\bx$ and $\by$, respectively, or even a subset $W\subset X\times Y$; cf. \cite{Iof00_,Iof10,Iof11,Iof13}.
A nonlocal regularity (covering) setting was already studied in \cite{DmiMilOsm80_}.

3) Considering nonlinear versions of \eqref{met_reg}, when, instead of the constant modulus $\kappa$, a certain \emph{functional modulus} $\mu:\R_+\to\R_+$ is used in \eqref{met_reg}, i.e., $\kappa d(y,F(x))$ is replaced by $\mu(d(y,F(x)))$; cf. \cite{Iof00_,Iof13,BorZhuang88,Pen89}.
This allows treating more subtle regularity properties arising in applications when the conventional estimates fail.
The majority of researchers focus on the particular case of ``power nonlinearities'' when $\mu$ is of the type $\mu(t)=\lambda t^k$ with $\la>0$ and $0<k\le1$ \cite{Fra87,Fra89,FraQui12,Iof13,YenYaoKie08}.

Starting with Ioffe \cite{Iof79}, most proofs of various sufficient
regularity/open\-ness criteria are based on the application of the celebrated \emph{Ekeland variational principle} (Theorem \ref{Eke}); see \cite{BorZhu05,Mor06.1,Pen13,RocWet98,DonRoc14, Iof00_}.
On the other hand, as observed by Ioffe in \cite{Iof00_}, the original methods used by Banach, Schauder, Lyusternik and Graves had employed special iteration procedures.
This classical approach was very popular in the 1980s -- early 1990s
\cite{Pta82,Kha86,Kha88,Kha89,DmiMilOsm80_,Don96, Com90,Rob80}.
In particular, in the series of three articles \cite{Kha86,Kha88,Kha89},
using iteration techniques several basic statements were established which generalized many known by that time open mapping and closed graph theorems and theorems of the Lyusternik type and results on approximation and semicontinuity or their refinements.
We refer to \cite{Iof00_} for a thorough discussion and comparison of the two main techniques.

In this article, we demonstrate that the approach based on iteration procedures still possesses potential.
In particular, we show that the \emph{Induction theorem} \cite[Theorem~1]{Kha86} (see Lemma~\ref{L1} below), which was used as the main tool when proving the other results in \cite{Kha86}, implies also all the main results in the subsequent articles \cite{Kha88,Kha89}.
It can serve as a substitution of the Ekeland variational principle when establishing other regularity criteria.
Furthermore, the latter classical result can also be established as a consequence of the Induction theorem.
The sequences in the statement of this theorem as well as several other statements in Section~\ref{bas_est} expose the details of iterative procedures which are usually hidden in the proofs of regularity/openness properties.
This is important for the understanding of the roles played by different parameters and leaves some freedom of choice of the parameters defining iteration procedures; this can be helpful when constructing specific schemes as demonstrated in \cite{Kha86,Kha88,Kha89}.

We consider a general nonlocal nonlinear regularity model for a set-valued mapping $F:X\times\R_+\rightrightarrows Y$, where $X$ and $Y$ are metric spaces.
It obviously covers the case of a parametric family of set-valued mappings; cf. \cite{Kha88,Kha89}.
At the same time,
the conventional setting of a set-valued mapping $F:X\rightrightarrows Y$ between metric spaces can be imbedded into the model by defining a set-valued mapping $\mathcal{F}:X\times \R_+\rightrightarrows Y$ by the equality $\mathcal{F}(x,t):=B(F(x),t)=\cup_{y\in F(x)}B(y,t)$ (with the convention $B(y,0)=\{y\}$).
As observed by Ioffe \cite[p.~508]{Iof00_}, this scheme is convenient for deducing regularity/openness estimates.

To define an analogue of metric regularity in this general setting, the distance $d(y,F(x))$ in the image space in the \RHS\ of \eqref{met_reg} is replaced by the ``distance-like'' quantity
\begin{equation}\label{del}
\delta(y,F,x):=\inf\{t>0\mid y\in F(x,t)\}.
\end{equation}
This allows one to define also a natural analogue of the covering property
(but not the Aubin property!)
and establish equivalence of both properties and some sufficient criteria.
If $F(x,t)$ describes the set of positions of a dynamical system feasible at moment $t$ starting at the initial point $x$, then constant \eqref{del} solves the minimal time feasibility problem.


In our study of regularity properties of \SVM s, we follow a three-step procedure which, in our opinion, is important for understanding the roles of particular assumptions employed in the criteria and the origins of specific regularity estimates.

1) Deducing basic regularity estimates at a fixed point $(x,t,y)\in\gph F$.

2) ``Setting free'' variable $t$ in the basic regularity estimates obtained in step 1 and formulating the best (in terms of $t$) estimates.
This way, the ``distance-like'' quantity \eqref{del} comes into play.
The estimates are formulated at a fixed point $(x,y)\in X\times Y$.

3) ``Setting free'' variables $x$ and $y$ in the regularity estimates obtained in step~2, restricting them to a subset $W\subset X\times Y$ and formulating estimates holding for all $(x,y)\in W$.
For the motivations behind such settings we refer the reader to \cite{Iof11,Iof13}.
This way, we arrive at analogues of the metric regularity criteria.
Under the appropriate choice of the set $W$, one can study various local and nonlocal settings of this property and even weaker sub- and semi-regularity versions.
This line goes beyond the scope of the current article.
\medskip

The structure of the article is as follows.
In the next section, we give a short proof of a revised version of the Induction theorem \cite[Theorem~1]{Kha86} and then apply it to establish several basic regularity estimates for a set-valued mapping $F:X\times\R_+\rightrightarrows Y$ at a fixed point $(x,t,y)\in\gph F$.
As a consequence, we obtain the two main theorems from \cite{Kha89} which cover the other results in \cite{Kha86,Kha88}.
Next we discuss the relationship between the Induction theorem and the Ekeland variational principle.
As another consequence of the aforementioned regularity estimates, we deduce several `at a point' sufficient criteria for the regularity of $F$ in terms of quantity \eqref{del}.
Section~\ref{MET_SEC} is devoted to nonlinear \emph{regularity on a set} (and the corresponding openness property) being a direct analogue of metric regularity in the conventional setting.
We refrain from using the term ``metric'' because quantity \eqref{del} is not a distance in the image space.
In Section~\ref{CON}, we demonstrate how the definitions and criteria from Section~\ref{MET_SEC} translate into the conventional setting of a set-valued mapping $F:X\rightrightarrows Y$ taking the natural metric form.
In Section~\ref{S5}, our general nonlinear regularity model is applied to establishing second-order necessary optimality conditions for a general nonsmooth set-valued optimization problem with mixed constraints.
In line with the original idea of Lyusternik, the role of the regularity assumption is to allow handling of the
constraints.
This remains one of the major motivations for the development of the regularity theory.
The final Section~\ref{ConRem} contains some concluding remarks and a list of things to be done hopefully in not-so-distant future.

Our basic notation is standard; cf. \cite{Mor06.1,RocWet98,DonRoc14,Pen13,BorZhu05}.
$X$ and $Y$ are metric spaces.
Metrics in all spaces are denoted by the same symbol
$d(\cdot,\cdot)$.
If $x$ and $C$ are a point and a subset of a metric space, then
$d(x,C):=\inf_{c\in{C}}d(x,c)$ is the point-to-set distance from
$x$ to $C$, while
$\overline{C}$ and $\bd C$ denote the closure and the boundary of $C$.
$B(x,r)$ and $\overline{B}(x,r)$ stand for the open and closed balls of radius $r>0$ centered at $x$, respectively. We use the convention that $B(x,0)=\{x\}$.
If $C$ is a subset of a linear space, then $\cone C:=\{\lambda x\mid  \lambda>0,\; x\in C\}$ is the cone generated by $C$.

\section{Regularity at a point}\label{bas_est}
This section prepares the tools for the study of regularity properties of set-valued mappings in the rest of the article.

\subsection{Basic estimates}\label{BE}

The next technical lemma is a revised version of the \emph{Induction theorem} \cite[Theorem~1]{Kha86}
and contains the core arguments used in the main results of \cite{Kha86,Kha88,Kha89}.
For simplicity, it is formulated for mappings between metric spaces.
(Most of the results in \cite{Kha86,Kha88,Kha89} are formulated in the more general setting of quasimetric spaces.)

Recall that a set-valued mapping $F:X\rightrightarrows Y$ between metric spaces is called outer semicontinuous \cite{RocWet98}
at $\bx\in X$ if
\begin{equation*}
\Limsup_{x\to\bx}F(x):=\{y\in Y\mid \liminf_{x\to\bx}d(y,F(x))=0\}\subset F(\bx).
\end{equation*}

\begin{lemma}\label{L1}
Let $X$ be a complete metric space, $\Phi:\R_+\rightrightarrows X$, $t>0$ and $x\in\Phi(t)$.
Suppose that $\Phi$ is
outer semicontinuous at $0$
and there are sequences of positive numbers $(a_n)$ and $(b_n)$ such that
\begin{gather}\label{A4}
\sum_{n=0}^{\infty}b_n<\infty,
\\\label{A2}
a_0=t
\quad\mbox{and}\quad
a_n\downarrow0\mbox{ as } n\to\infty,
\\\label{A3}
d(u,\Phi(a_{n+1}))< b_n
\quad\mbox{for all}\quad
u\in \Phi(a_n)\cap U_n\quad (n=0,1,\ldots),
\end{gather}
where $U_0:=\{x\}$, $U_n:=B(x,\sum_{i=0}^{n-1}b_{i})$ $(n=1,2,\ldots)$.
Then,
$d(x,\Phi(0))<\sum_{n=0}^{\infty}b_n$.
\end{lemma}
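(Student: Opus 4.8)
The plan is to construct, by induction, a sequence $(x_n)$ with $x_n\in\Phi(a_n)\cap U_n$ starting from $x_0:=x$, such that consecutive terms are close in the sense dictated by \eqref{A3}, and then pass to the limit using completeness of $X$ together with outer semicontinuity of $\Phi$ at $0$.

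First I would set $x_0:=x$; since $x\in\Phi(t)=\Phi(a_0)$ and $U_0=\{x\}$, we have $x_0\in\Phi(a_0)\cap U_0$. Suppose $x_n\in\Phi(a_n)\cap U_n$ has been chosen. By \eqref{A3} applied to $u=x_n$, we have $d(x_n,\Phi(a_{n+1}))<b_n$, so there exists $x_{n+1}\in\Phi(a_{n+1})$ with $d(x_n,x_{n+1})<b_n$. The point to check is that $x_{n+1}\in U_{n+1}$: indeed, $d(x,x_{n+1})\le\sum_{i=0}^{n}d(x_i,x_{i+1})<\sum_{i=0}^{n}b_i$, which is exactly the defining radius of $U_{n+1}$. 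This closes the induction and produces the whole sequence. The estimate $d(x_n,x_{n+1})<b_n$ together with \eqref{A4} shows that $(x_n)$ is Cauchy (the tails of $\sum b_n$ control $d(x_m,x_n)$ for $m<n$), so by completeness $x_n\to x_*$ for some $x_*\in X$. Letting $n\to\infty$ in $d(x,x_n)\le\sum_{i=0}^{n-1}b_i$ gives $d(x,x_*)\le\sum_{i=0}^{\infty}b_i$; to get the strict inequality claimed in the lemma, I would note that $d(x,x_1)<b_0$ strictly and the remaining terms add at most $\sum_{i=1}^{\infty}b_i$, so $d(x,x_*)<\sum_{i=0}^{\infty}b_i$. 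It remains to identify the limit: since $x_n\in\Phi(a_n)$ and $a_n\downarrow0$ by \eqref{A2}, we have $\liminf_{a\to0}d(x_*,\Phi(a))=0$, hence $x_*\in\Limsup_{a\to0}\Phi(a)\subset\Phi(0)$ by outer semicontinuity. Therefore $d(x,\Phi(0))\le d(x,x_*)<\sum_{n=0}^{\infty}b_n$, which is the assertion.

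The main obstacle, such as it is, lies in two bookkeeping points rather than in any deep idea: first, verifying that each newly constructed $x_{n+1}$ lands in the correct set $U_{n+1}$ — this is where the nested structure of the balls $U_n$ with radii $\sum_{i=0}^{n-1}b_i$ is exactly calibrated to the step sizes $b_n$, and it is essential that \eqref{A3} is only required on $\Phi(a_n)\cap U_n$, not all of $\Phi(a_n)$; and second, being careful to extract a \emph{strict} inequality at the end, which is why one isolates the first step $d(x,x_1)<b_0$ and absorbs the convergent tail. The use of outer semicontinuity at $0$ is the only place where the target value $0$ of the parameter enters, and it converts the approximate membership $x_n\in\Phi(a_n)$ with $a_n\to0$ into genuine membership $x_*\in\Phi(0)$.
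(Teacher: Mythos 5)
Your proposal is correct and follows essentially the same argument as the paper: build the sequence $x_n\in\Phi(a_n)$ with $d(x_n,x_{n+1})<b_n$ via \eqref{A3}, use \eqref{A4} and completeness to get a limit, and invoke outer semicontinuity at $0$ to place the limit in $\Phi(0)$ with the strict distance estimate. Your explicit check that $x_{n+1}\in U_{n+1}$ (so that \eqref{A3} can be reapplied) is a detail the paper leaves implicit, but it is the same proof.
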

\begin{proof}
Putting $x_0:=x\in\Phi(a_0)\cap U_0$ and using \eqref{A3} repeatedly, we obtain a sequence $(x_n)$ satisfying $x_n\in\Phi(a_n)$ and
$$
d(x_n,x_{n+1})<b_n\quad (n=0,1,\ldots).
$$
The above inequalities together with \eqref{A4} imply that $(x_n)$ is a Cauchy sequence and, as $X$ is complete, converges to some point $z\in X$.
Note that
$$
d(z,x)\le\sum_{n=0}^{\infty} d(x_n,x_{n+1})<\sum_{n=0}^{\infty}b_{n}.
$$
Thanks to the outer semicontinuity of $\Phi$ at $0$ and \eqref{A2}, we have $z\in\Phi(0)$.
Hence, $d(x,\Phi(0))<\sum_{n=0}^{\infty}b_n$.
\end{proof}

\begin{remark}\label{R2.1}
{\rm 1}. The above lemma does not talk about regularity properties of set-valued mappings.
At the same time, we want the reader to observe certain similarity between the conclusion of Lemma~\ref{L1} and inequality \eqref{met_reg} (assuming that $\Phi(0)$ corresponds to the inverse of some set-valued mapping; this is going to be our next step).
The sequences in the statement of the lemma expose  iterative procedures employed in some traditional proofs of regularity properties which can be traced back to Banach and Schauder.

{\rm 2}. As it has been observed by many authors with regards to other regularity statements, with obvious changes, the proof of Lemma~\ref{L1} remains valid if instead of the outer semicontinuity of $\Phi$ and completeness of $X$ one assumes that $\gph\Phi$ is complete (in the product topology).
In fact, it is sufficient to assume that $\gph\Phi\cap (\R_+\times\overline{B}(x,\sum_{n=0}^{\infty}b_{i}))$ is complete.

{\rm 3}. In some applications, a ``restricted'' version of Lemma~\ref{L1} can be useful.
Given a subset $U$ of $X$ and a point $x\in\Phi(t)\cap U$, condition \eqref{A3} can be replaced with the following ``restricted'' one:
\begin{equation*}
d(u,\Phi(a_{n+1})\cap U)< b_n
\quad\mbox{for all}\quad
u\in \Phi(a_n)\cap U_n\quad (n=0,1,\ldots),
\end{equation*}
where $U_0:=\{x\}$, $U_n:=U\cap B(x,\sum_{i=0}^{n-1}b_{i})$ $(n=1,2,\ldots)$.

{\rm 4}. The conclusion of Lemma~\ref{L1} can be equivalently rewritten as
$$
\Phi(0)\cap B\left(x,\sum_{n=0}^{\infty}b_n\right)\ne\emptyset.
$$
\end{remark}

From now on, we consider a set-valued mapping $F:X\times\R_+\rightrightarrows Y$, where $X$ and $Y$ are metric spaces, $X$ is complete.
Given a $t\in\R_+$, we denote $F_t:=F(\cdot,t):X\rightrightarrows Y$.

The purpose of this two-variable model is twofold.
Firstly, if the second variable is interpreted as a parameter, it allows us to cover the case of a parametric family of set-valued mappings; cf. \cite{Kha88,Kha89}.
Secondly, when studying regularity properties of a standard set-valued mapping $F:X\rightrightarrows Y$ between metric spaces, it can sometimes be convenient to consider its two-variable extension $(x,t)\to B(F(x),t)$; cf. Ioffe \cite{Iof00_}.
This model will be explored in Section~\ref{CON}.
In this subsection we focus on the case of a parametric family of set-valued mappings and demonstrate that the main `iterative' results of \cite{Kha88,Kha89} follow easily from Lemma~\ref{L1}.

The next two theorems contain the core arguments of
\cite[Theorems~3 and 4]{Kha89}, respectively.

\begin{theorem}\label{Khanh+}
Let $t>0$ and $(x,t,y)\in\gph F$.
Suppose that the mapping $\tau\mapsto\Phi(\tau):=F_\tau\iv(y)$ on $\R_+$ is outer semicontinuous at $0$
and there are sequences of positive numbers $(b_n)$ and $(c_n)$ and a function $m:(0,\infty)\to(0,\infty)$ such that condition \eqref{A4} holds true and
\begin{gather}\label{B3}
m(\tau)\downarrow0\mbox{ as }\tau\downarrow0
\quad\mbox{and}\quad
c_n\downarrow0\mbox{ as }n\to\infty,
\\\label{B4+}
d(x,F_{m(c_{1})}^{-1}(y))<b_0,
\\\label{B5+}
d(u,F_{m(c_{n+1})}^{-1}(y))< b_n
\;\mbox{for all}\;
u\in F_{m(c_n)}^{-1}(y)\cap B\left(x,\sum_{i=0}^{n-1}b_{i}\right)\; (n=1,2,\ldots).
\end{gather}
Then,
$d(x,F_0\iv(y))<\sum_{n=0}^{\infty}b_n$.
\end{theorem}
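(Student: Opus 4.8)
The plan is to derive Theorem~\ref{Khanh+} as a straightforward specialization of the Induction theorem, Lemma~\ref{L1}, by making the right choice of the mapping $\Phi$ and of the auxiliary sequence $(a_n)$. First I would set $\Phi:\R_+\rightrightarrows X$, $\Phi(\tau):=F_\tau\iv(y)$. The assumption $(x,t,y)\in\gph F$ means exactly $y\in F(x,t)=F_t(x)$, i.e.\ $x\in F_t\iv(y)=\Phi(t)$, so together with $t>0$ the initial data required by Lemma~\ref{L1} are available; the outer semicontinuity at $0$ of $\tau\mapsto F_\tau\iv(y)$ postulated in the theorem is precisely outer semicontinuity of $\Phi$ at $0$, and $X$ is complete by our standing hypothesis.

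Next I would apply Lemma~\ref{L1} with this $\Phi$, the sequence $(b_n)$ of the theorem, and $(a_n)$ defined by $a_0:=t$ and $a_n:=m(c_n)$ for $n\ge1$. Condition \eqref{A4} of Lemma~\ref{L1} is directly among the hypotheses of the theorem; condition \eqref{A2} reduces to $a_0=t$, which holds by construction, together with $a_n\to0$, which follows from $c_n\downarrow0$ and $m(\tau)\downarrow0$ as $\tau\downarrow0$ in \eqref{B3}; and condition \eqref{A3} splits into exactly the two remaining assumptions of the theorem. Indeed, for $n=0$ one has $\Phi(a_0)\cap U_0=\Phi(t)\cap\{x\}=\{x\}$, so \eqref{A3} becomes $d(x,\Phi(a_1))=d(x,F_{m(c_1)}\iv(y))<b_0$, which is \eqref{B4+}; for $n\ge1$ one has $\Phi(a_n)\cap U_n=F_{m(c_n)}\iv(y)\cap B(x,\sum_{i=0}^{n-1}b_i)$, so \eqref{A3} becomes $d(u,F_{m(c_{n+1})}\iv(y))<b_n$ for all such $u$, which is \eqref{B5+}. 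Lemma~\ref{L1} then yields $d(x,\Phi(0))=d(x,F_0\iv(y))<\sum_{n=0}^{\infty}b_n$, the desired conclusion.

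The one point that needs a little care is the monotonicity clause ``$a_n\downarrow0$'' in \eqref{A2}: there is in general no reason why $m(c_1)\le t=a_0$, so the sequence $(a_n)$ constructed above may fail to be nonincreasing at its first term (it does tend to $0$, which is all that matters). This is harmless because the ordering of the numbers $a_n$ is never invoked in the proof of Lemma~\ref{L1}: the iterates $x_n\in\Phi(a_n)$ are generated solely through \eqref{A3}, they form a Cauchy sequence by virtue of \eqref{A4} alone, and the inclusion $z:=\lim_n x_n\in\Phi(0)$ is obtained from outer semicontinuity using only $a_n\to0$. So Lemma~\ref{L1} remains valid with \eqref{A2} weakened to ``$a_0=t$ and $a_n\to0$ as $n\to\infty$'', and it is this slightly relaxed version that the argument above uses. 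Beyond this bookkeeping I do not anticipate any genuine obstacle: the substance of the theorem is entirely contained in Lemma~\ref{L1}, and, as noted in the second part of Remark~\ref{R2.1}, completeness of $X$ together with outer semicontinuity of $\Phi$ could itself be traded for completeness of a suitable portion of $\gph\Phi$.
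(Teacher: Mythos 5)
Your proof is correct and takes essentially the same route as the paper: apply Lemma~\ref{L1} with $\Phi(\tau):=F_\tau\iv(y)$, $a_0:=t$, $a_n:=m(c_n)$ for $n\ge1$ and the given $(b_n)$, identifying \eqref{B4+} and \eqref{B5+} with the $n=0$ and $n\ge1$ instances of \eqref{A3}. Your side remark that the monotonicity in \eqref{A2} may fail at the first step ($m(c_1)\le t$ is not guaranteed) but is never used in the proof of Lemma~\ref{L1} is a legitimate refinement which the paper's one-line verification glosses over (cf.\ Remark~\ref{R2.2}.2), and it does not change the argument.
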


\begin{proof}
Set $a_0:=t$, $a_n:=m(c_n)$ $(n=1,2,\ldots)$.
Conditions \eqref{B3}, \eqref{B4+} and \eqref{B5+} imply \eqref{A2} and \eqref{A3}.
By Lemma~\ref{L1}, there exists a $z\in B(x,\sum_{n=0}^{\infty}b_n)$ such that $y\in F(z,0)$, i.e., $z\in F^{-1}_{0}(y)$.
\end{proof}

Given a function $b:\R_+\to\R_+$, we define, for each $t\in\R_+$, $b^0(t):=t$, $b^n(t):=b(b^{n-1}(t))$ $(n=1,2,\ldots)$.

\begin{theorem}\label{Khanh4+}
Let $t>0$ and $(x,t,y)\in\gph F$.
Suppose that the mapping $\tau\mapsto\Phi(\tau):=F_\tau\iv(y)$ on $\R_+$ is outer semicontinuous at $0$ and
there are functions $b,m,\mu:(0,\infty)\to (0,\infty)$ such that
\begin{equation}\label{mutau+}
m(\tau)\downarrow0
\quad\Rightarrow\quad
\tau\downarrow0
\end{equation}
and, for each $\tau>0$ with $\mu(\tau)\le\mu(t)$,
\begin{gather}\label{mu++}
\mu(\tau)\ge m(\tau)+\mu(b(\tau)),
\\\label{net+++}
d(u,F_{b(\tau)}^{-1}(y))<m(\tau) \mbox{ for all } u\in F_{\tau}^{-1}(y)\cap B(x,\mu(t)-\mu(\tau)).
\end{gather}
Then,
$d(x,F_0\iv(y))<\mu(t)$.
\end{theorem}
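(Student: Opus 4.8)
The plan is to obtain the statement from the Induction theorem (Lemma~\ref{L1}) applied to the set-valued mapping $\Phi:\R_+\rightrightarrows X$ defined by $\Phi(\tau):=F_\tau\iv(y)$, exactly in the spirit of the proof of Theorem~\ref{Khanh+}; the only genuinely new ingredient is the choice of the two sequences, which I would now generate by \emph{iterating the function} $b$. Concretely, put $a_0:=t$, $a_{n+1}:=b(a_n)$ (so $a_n=b^n(t)$) and $b_n:=m(a_n)$ for $n=0,1,\ldots$. Since $(x,t,y)\in\gph F$ we have $x\in\Phi(t)$, $X$ is complete, $t>0$, and $\Phi$ is outer semicontinuous at $0$ by hypothesis, so all the assumptions of Lemma~\ref{L1} that do not involve $(a_n)$ and $(b_n)$ hold automatically, and it remains to verify \eqref{A4}, \eqref{A2} and \eqref{A3} for this choice.

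First I would prove by induction on $n$ that $\mu(a_n)\le\mu(t)$, which is exactly what makes \eqref{mu++} and \eqref{net+++} applicable at $\tau=a_n$. The case $n=0$ is trivial, and if $\mu(a_n)\le\mu(t)$, then \eqref{mu++} at $\tau=a_n$ gives $\mu(a_{n+1})=\mu(b(a_n))\le\mu(a_n)-m(a_n)<\mu(a_n)\le\mu(t)$; in particular the sequence $(\mu(a_n))$ is strictly decreasing. Telescoping the resulting inequalities $m(a_i)\le\mu(a_i)-\mu(a_{i+1})$ yields $\sum_{i=0}^{n-1}b_i=\sum_{i=0}^{n-1}m(a_i)\le\mu(a_0)-\mu(a_n)=\mu(t)-\mu(a_n)<\mu(t)$ for every $n$, and letting $n\to\infty$ gives $\sum_{n=0}^{\infty}b_n\le\mu(t)<\infty$, which is \eqref{A4}. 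Since this series converges, $b_n=m(a_n)\to0$, and then condition \eqref{mutau+} forces $a_n\to0$; together with $a_0=t$ this gives \eqref{A2}. (Only $a_n\to0$, not its monotonicity, is in fact used in the proof of Lemma~\ref{L1}.)

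Next I would check \eqref{A3} with $U_0=\{x\}$ and $U_n=B(x,\sum_{i=0}^{n-1}b_i)$. By the telescoping bound above, $U_n\subseteq B(x,\mu(t)-\mu(a_n))$ for $n\ge1$, while for $n=0$, $x\in F_t\iv(y)\cap B(x,\mu(t)-\mu(t))=\{x\}=U_0$. Hence every $u\in\Phi(a_n)\cap U_n=F_{a_n}\iv(y)\cap U_n$ satisfies $u\in F_{a_n}\iv(y)\cap B(x,\mu(t)-\mu(a_n))$, so \eqref{net+++} at $\tau=a_n$ gives $d(u,F_{b(a_n)}\iv(y))<m(a_n)$, i.e.\ $d(u,\Phi(a_{n+1}))<b_n$, which is precisely \eqref{A3}. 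Lemma~\ref{L1} then yields $d(x,\Phi(0))=d(x,F_0\iv(y))<\sum_{n=0}^{\infty}b_n\le\mu(t)$, as claimed.

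The induction and the telescoping are routine; the one place requiring genuine care — and the analogue here of the ``Ekeland budget'' used in the variational-principle proofs — is ensuring that the iteration never leaves the region in which \eqref{mu++} and \eqref{net+++} are available, that is, that the accumulated ball radius $\sum_{i=0}^{n-1}b_i$ stays below $\mu(t)-\mu(a_n)$. This is exactly what ties the admissible step sizes $m(a_n)$ to the decreasing quantities $\mu(a_n)$, and it is where the structural hypothesis \eqref{mu++} does its work. A secondary subtlety is extracting $a_n\to0$ from mere summability of the $m(a_n)$, which is the sole purpose of \eqref{mutau+}.
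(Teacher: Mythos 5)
Your proposal is correct and follows essentially the same route as the paper: both apply Lemma~\ref{L1} with $a_n:=b^n(t)$ and $b_n:=m(b^n(t))$, sum/telescope \eqref{mu++} to get \eqref{A4} together with $\mu(a_n)\le\mu(t)$ (so that \eqref{net+++} yields \eqref{A3}), and invoke \eqref{mutau+} for \eqref{A2}. Your explicit induction showing $\mu(a_n)\le\mu(t)$ before each use of \eqref{mu++} and \eqref{net+++} is a slightly more careful presentation of the same argument the paper gives by summing the inequalities along $\tau=t,b(t),b^2(t),\ldots$.
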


\begin{proof}
Set $a_n:=b^n(t)$, $b_n:=m(a_n)=m(b^n(t))$ $(n=0,1,\ldots)$.
Adding inequalities \eqref{mu++} corresponding to $\tau=t,b(t),b^2(t),\ldots$, we obtain
\begin{equation*}
\mu(t)\ge \sum_{n=0}^{\infty}m\left(b^n(t)\right) =\sum_{n=0}^{\infty}b_n.
\end{equation*}
Hence, \eqref{A4} is satisfied and $b_n\downarrow 0$ as $n\to \infty$.
Condition \eqref{A2} is satisfied thanks to \eqref{mutau+}.
Condition \eqref{net+++} with $\tau=a_n$ takes the following form:
\begin{equation}\label{tak}
d(u,\Phi(a_{n+1}))<b_n \mbox{ for all } u\in\Phi(a_{n})\cap B(x,\mu(t)-\mu(a_n)).
\end{equation}
For any $n>0$, adding inequalities \eqref{mu++} corresponding to $\tau=t,b(t),\ldots,b^{n-1}(t)$, we obtain
\begin{equation*}
\mu(t)\ge\sum_{i=0}^{n-1}b_{i}+\mu(a_n).
\end{equation*}
Hence, $\mu(a_n)\le\mu(t)$ and condition \eqref{tak} implies \eqref{A3}.
By Lemma~\ref{L1}, there exists a $z\in B(x,\mu(t))$ such that $y\in F(z,0)$.
\end{proof}

\begin{remark}\label{R2.2}
{\rm 1}. The statements of Theorems~\ref{Khanh+} and \ref{Khanh4+} expose the details of iteration procedures which are usually hidden in the proofs of regularity/openness properties.
For instance, the scalar function $b$ in Theorem~\ref{Khanh4+} defines the sequence of iterations corresponding to $\tau\downarrow0$: given a value $\tau$, the next value is $b(\tau)$ which produces a smaller than $\mu(\tau)$ value $\mu(b(\tau))$ of the function $\mu$ with the difference $\mu(\tau)-\mu(b(\tau))$ controlling thanks to \eqref{mu++} the value $m(\tau)$ of the function $m$ which in its turn controls thanks to \eqref{net+++} the distance between the iterations in $X$ leading in the end to the claimed estimate.
Inequalities of the type \eqref{mu++} and \eqref{net+++} (or \eqref{B5+}) are the key ingredients when istablishing regularity estimates.

Theorems~\ref{Khanh+} and \ref{Khanh4+} leave some freedom of choice of the parameters defining iteration procedures which can be helpful when constructing specific schemes as demonstrated in \cite{Kha86,Kha88,Kha89}.

{\rm 2}. Instead of \eqref{B3}, it is sufficient to assume in Theorem~\ref{Khanh+} that $m(c_n)\downarrow0$ as $n\to \infty$.
In Theorem~\ref{Khanh4+}, this is satisfied automatically thanks to \eqref{mutau+}.

{\rm 3}. The conclusions of Theorems~\ref{Khanh+} and \ref{Khanh4+} can be equivalently rewritten as
$
y\in F\left(B\left(x,r\right),0\right)
$
where either $r=\sum_{n=0}^{\infty}b_n$ or $r=\mu(t)$.
\end{remark}

Theorem~\ref{Khanh4+} covers a seemingly more general setting of regularity/covering on a system of balls; cf. \cite{DmiMilOsm80_,Iof00_,Kha89}.

Recall that a family $\Sigma$ of balls in $X$ is called a \emph{complete system} \cite[Definition~1.1]{DmiMilOsm80_} if, for any $B(x,r)\in\Sigma$, one has $B(x',r')\in\Sigma$ provided that $x'\in X$, $r'>0$ and $d(x,x')+r'\le r$.
For a subset $M$ of $X$, $\Sigma(M)$ denotes a complete system of balls $B(x,r)$ in $X$ with $B(x,r)\subset M$.
Obviously the family of all balls in $X$ forms a complete system.

\begin{corollary}\label{Khanh4+.1}
Let $M\subset X$ and $\Sigma(M)$ be a complete system, $t>0$ and $(x,t,y)\in\gph F$.
Suppose that the mapping $\tau\mapsto F_\tau\iv(y)$ on $\R_+$ is outer semicontinuous at $0$ and
there are functions $b,m,\mu:(0,\infty)\to (0,\infty)$ such that $B(x,\mu(t))\in\Sigma(M)$, condition \eqref{mutau+} is satisfied
and, for each $\tau>0$ with $\mu(\tau)\le\mu(t)$, condition \eqref{mu++} holds true and
\begin{equation}\label{net++}
d(u,F_{b(\tau)}^{-1}(y))<m(\tau) \mbox{ for all } u\in F_{\tau}^{-1}(y)\cap \{x'\mid B(x',\mu(\tau))\in \Sigma(M)\}.
\end{equation}
Then,
$d(x,F_0\iv(y))<\mu(t)$.
\end{corollary}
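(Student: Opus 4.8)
The plan is to obtain Corollary~\ref{Khanh4+.1} as a direct consequence of Theorem~\ref{Khanh4+}: I would verify that all the hypotheses of the latter are met for the data of the corollary and then simply quote its conclusion. The only discrepancy between the two statements is the set over which $u$ ranges in the key inequalities \eqref{net+++} and \eqref{net++}: in Theorem~\ref{Khanh4+} it is the ball $B(x,\mu(t)-\mu(\tau))$, whereas in the corollary it is $\{x'\mid B(x',\mu(\tau))\in\Sigma(M)\}$. Hence everything reduces to establishing the inclusion
\[
B(x,\mu(t)-\mu(\tau))\subset\{x'\mid B(x',\mu(\tau))\in\Sigma(M)\}
\]
for every $\tau>0$ with $\mu(\tau)\le\mu(t)$.

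First I would prove this inclusion. Fix such a $\tau$ and take $x'\in B(x,\mu(t)-\mu(\tau))$, so that $d(x,x')<\mu(t)-\mu(\tau)$ and therefore $d(x,x')+\mu(\tau)\le\mu(t)$. Since $\mu(\tau)>0$ and, by assumption, $B(x,\mu(t))\in\Sigma(M)$, the defining property of a complete system \cite[Definition~1.1]{DmiMilOsm80_} yields $B(x',\mu(\tau))\in\Sigma(M)$, as required. The borderline case $\mu(\tau)=\mu(t)$ is harmless, since then $B(x,\mu(t)-\mu(\tau))=B(x,0)=\{x\}$ and $B(x,\mu(t))\in\Sigma(M)$ holds by hypothesis.

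Given this inclusion, any $u\in F_\tau^{-1}(y)\cap B(x,\mu(t)-\mu(\tau))$ also lies in $F_\tau^{-1}(y)\cap\{x'\mid B(x',\mu(\tau))\in\Sigma(M)\}$, so \eqref{net++} furnishes $d(u,F_{b(\tau)}^{-1}(y))<m(\tau)$; this is precisely condition \eqref{net+++}. All the other hypotheses of Theorem~\ref{Khanh4+} --- outer semicontinuity of $\tau\mapsto F_\tau\iv(y)$ at $0$, the existence of the functions $b,m,\mu:(0,\infty)\to(0,\infty)$, implication \eqref{mutau+}, and inequality \eqref{mu++} for each $\tau>0$ with $\mu(\tau)\le\mu(t)$ --- are assumed verbatim in the corollary. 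Applying Theorem~\ref{Khanh4+} then gives $d(x,F_0\iv(y))<\mu(t)$, which is the desired conclusion.

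There is no genuinely hard step here: the corollary is a restatement of Theorem~\ref{Khanh4+} adapted to the ``system of balls'' formalism of \cite{DmiMilOsm80_}, and the only point demanding a little care is the correct handling of the strict versus non-strict inequality when invoking the complete-system property. If one also wished to record that the solution can be found inside $M$, i.e.\ that $F_0\iv(y)\cap M\ne\emptyset$, one would additionally use $B(x,\mu(t))\subset M$ together with the fact --- visible from the proof of Theorem~\ref{Khanh4+} through Lemma~\ref{L1} --- that the point $z$ produced there satisfies $z\in B(x,\mu(t))$; this, however, is not needed for the stated estimate.
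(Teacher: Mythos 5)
Your proof is correct and follows essentially the same route as the paper: establish the inclusion $B(x,\mu(t)-\mu(\tau))\subset\{x'\mid B(x',\mu(\tau))\in\Sigma(M)\}$ from the complete-system property and the assumption $B(x,\mu(t))\in\Sigma(M)$, then invoke Theorem~\ref{Khanh4+}. The only difference is that you spell out the verification of the inclusion, which the paper leaves implicit.
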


\begin{proof}
Since $B(x,\mu(t))\in\Sigma(M)$, it follows that
$B(x,\mu(t)-\mu(\tau))
\subset\{x'\mid B(x',\mu(\tau))\in \Sigma(M)\}$.
The conclusion follows from Theorem~\ref{Khanh4+}.
\end{proof}

The key estimates \eqref{net+++} and \eqref{net++} in Theorem~\ref{Khanh4+} and Corollary~\ref{Khanh4+.1} are for the original space $X$.
In some situations, one can use for that purpose also similar estimates in the image space $Y$.

\begin{corollary}\label{Khanh4+.2}
Let $t>0$ and $(x,t,y)\in\gph F$.
Suppose that the mapping $\tau\mapsto F_\tau\iv(y)$ on $\R_+$ is outer semicontinuous at $0$ and
there are functions $b,m,\mu:(0,\infty)\to (0,\infty)$ such that condition \eqref{mutau+} is satisfied
and, for each $\tau>0$ with $\mu(\tau)\le\mu(t)$, condition \eqref{mu++} holds true and
\begin{gather}\label{set1}
F_0\iv(B(y,\tau))\subset F_\tau\iv(y),
\\\label{set2}
d\left(y,F_0(B(u,m(\tau)))\right)<b(\tau) \mbox{ for all } u\in F_{\tau}^{-1}(y)\cap B(x,\mu(t)-\mu(\tau)).
\end{gather}
Then,
$d(x,F_0\iv(y))<\mu(t)$.
\end{corollary}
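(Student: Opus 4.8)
The plan is to deduce Corollary~\ref{Khanh4+.2} directly from Theorem~\ref{Khanh4+}. All hypotheses of that theorem except the key estimate \eqref{net+++} are assumed here verbatim (the outer semicontinuity of $\tau\mapsto F_\tau\iv(y)$ at $0$, the functions $b,m,\mu$, condition \eqref{mutau+}, and condition \eqref{mu++} for every $\tau$ with $\mu(\tau)\le\mu(t)$), so it suffices to show that \eqref{set1} and \eqref{set2}, in the presence of \eqref{mu++}, imply \eqref{net+++} for every such $\tau$.

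First I would fix $\tau>0$ with $\mu(\tau)\le\mu(t)$ and an arbitrary $u\in F_{\tau}\iv(y)\cap B(x,\mu(t)-\mu(\tau))$. Applying \eqref{set2} to this $u$ and using that the infimum defining $d(y,F_0(B(u,m(\tau))))$ is strictly below $b(\tau)$, I obtain a point $w\in B(u,m(\tau))$ and a point $y'\in F(w,0)$ with $d(y,y')<b(\tau)$; equivalently, $w\in F_0\iv(B(y,b(\tau)))$. The next step is to feed this into \eqref{set1} with the argument $b(\tau)$ in place of $\tau$. This is legitimate because \eqref{mu++} gives $\mu(b(\tau))\le\mu(\tau)-m(\tau)<\mu(\tau)\le\mu(t)$, so $b(\tau)$ lies in the range of arguments for which \eqref{set1} is assumed. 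Hence $w\in F_0\iv(B(y,b(\tau)))\subset F_{b(\tau)}\iv(y)$, and since $w\in B(u,m(\tau))$ means $d(u,w)<m(\tau)$, we conclude $d(u,F_{b(\tau)}\iv(y))\le d(u,w)<m(\tau)$, which is precisely \eqref{net+++}.

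With \eqref{net+++} established for every admissible $\tau$, Theorem~\ref{Khanh4+} applies and yields $d(x,F_0\iv(y))<\mu(t)$, which is the claim.

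I expect the only delicate points to be bookkeeping rather than substance: one must keep all inequalities strict (the open ball $B(u,m(\tau))$ and the strict ``$<b(\tau)$'' in \eqref{set2}) so that \eqref{net+++} comes out with the strict inequality that Theorem~\ref{Khanh4+} requires, and one must check that $b(\tau)$ is an admissible argument for \eqref{set1}, which is exactly where \eqref{mu++} is used. No appeal to outer semicontinuity or completeness of $X$ is needed at this level; those enter only inside Theorem~\ref{Khanh4+}, hence ultimately through Lemma~\ref{L1}.
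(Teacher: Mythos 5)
Your proposal is correct and follows essentially the same route as the paper: use \eqref{set2} to produce a point $w\in B(u,m(\tau))$ with $w\in F_0\iv(B(y,b(\tau)))$, pass through \eqref{set1} at the argument $b(\tau)$ to get $w\in F_{b(\tau)}\iv(y)$, deduce \eqref{net+++}, and invoke Theorem~\ref{Khanh4+}. Your explicit check via \eqref{mu++} that $\mu(b(\tau))\le\mu(t)$, so that $b(\tau)$ is an admissible argument for \eqref{set1}, is a small point the paper leaves implicit, but it is the same argument.
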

\begin{proof}
Observe that conditions \eqref{set1} and \eqref{set2} imply \eqref{net+++}.
Indeed, if $u\in F^{-1}_\tau(y)\cap B(x,\mu(t)-\mu(\tau))$, then, by \eqref{set2}, there exists a $z\in B(u,m(\tau))$ such that $d(y,F_0(z))<b(\tau)$, or equivalently, $z\in F_0\iv(B(y,b(\tau)))$.
It follows from \eqref{set1} that $z\in F\iv_{b(\tau)}(y)$.
Hence, $d(u,F\iv_{b(\tau)}(y))<m(\tau)$.
The conclusion follows from Theorem~\ref{Khanh4+}.
\end{proof}

\begin{remark}\label{ha}
{\rm 1}. Instead of \eqref{mutau+}, it is sufficient to assume in Theorem~\ref{Khanh4+} and Corollaries~\ref{Khanh4+.1} and \ref{Khanh4+.2} that $b^n(t)\downarrow0$ as $n\to \infty$.
The last condition is satisfied, e.g., when $b(t)=\lambda t$ with $\lambda\in(0,1)$.

{\rm 2}. If condition \eqref{mu++} holds true for all $\tau>0$ with $\mu(\tau)\le\mu(t)$, then $\mu(\tau)\ge \sum_{n=0}^{\infty}m\left(b^n(\tau)\right)$.
On the other hand, if the last condition holds true as equality (for all $\tau>0$ with $\mu(\tau)\le\mu(t)$), then condition \eqref{mu++} is satisfied (as equality).
Hence, condition \eqref{mu++} in Theorem~\ref{Khanh4+} and Corollaries~\ref{Khanh4+.1} and \ref{Khanh4+.2} can be replaced by the following definition of the smallest function $\mu$ satisfying \eqref{mu++}:
\begin{equation}\label{mu+}
\mu(\tau):= \sum_{n=0}^{\infty}m\left(b^n(\tau)\right),
\end{equation}
thus producing the strongest conclusion.

{\rm 3}. It is sufficient to assume in Theorem~\ref{Khanh4+} and Corollaries~\ref{Khanh4+.1} and \ref{Khanh4+.2} that conditions \eqref{mu++}, \eqref{net+++}, \eqref{net++}, \eqref{set1} and \eqref{set2} are satisfied only for $\tau=t,b(t),b^2(t),\ldots$
In particular, if this sequence is monotone (as in the typical example mentioned in part 1 above or, thanks to \eqref{mu++} when $\mu$ is nondecreasing), then the conclusions of all the statements remain true when conditions \eqref{mu++}, \eqref{net+++}, \eqref{net++}, \eqref{set1} and \eqref{set2} are satisfied for all $\tau\in(0,t]$.

{\rm 4}. Thanks to part 3, instead of conditions \eqref{net+++}, \eqref{net++} and \eqref{set2}, one can require that, for each $n=0,1,\ldots$, the following conditions hold true, respectively:
\begin{align}\label{r3.1}
d(u,F_{b^{n+1}(t)}^{-1}(y))<m(b^n(t)) \mbox{ for all } u&\in F_{b^n(t)}^{-1}(y)
\\&\notag
\cap B(x,\mu(t)-\mu(b^n(t))),
\\\notag
d(u,F_{b^{n+1}(t)}^{-1}(y))<m(b^n(t)) \mbox{ for all } u&\in F_{b^n(t)}^{-1}(y)
\\&\notag
\cap \{x'\mid B(x',\mu(b^n(t)))\in \Sigma(M)\},
\\\label{r3.3}
d\left(y,F_0(B(u,m(b^n(t))))\right)<b^{n+1}(t) \mbox{ for all } u&\in F_{b^n(t)}^{-1}(y)
\\&\notag
\cap B(x,\mu(t)-\mu(b^n(t))).
\end{align}
If $\mu$ is given by \eqref{mu+}, then conditions \eqref{r3.1} and \eqref{r3.3} can be equivalently rewritten as follows:
\begin{align*}
d(u,F_{b^{n+1}(t)}^{-1}(y))<m(b^n(t)) \mbox{ for all } u&\in F_{b^n(t)}^{-1}(y)
\cap B(x,\sum_{i=0}^{n-1}b^i(t)),
\\
d\left(y,F_0(B(u,m(b^n(t))))\right)<b^{n+1}(t) \mbox{ for all } u&\in F_{b^n(t)}^{-1}(y)
\cap B(x,\sum_{i=0}^{n-1}b^i(t)).
\end{align*}

{\rm 5}. The conclusions of Theorem~\ref{Khanh4+} and Corollaries~\ref{Khanh4+.1} and \ref{Khanh4+.2} can be equivalently rewritten as $y\in F(B(x,\mu(t)),0)$.
\end{remark}

The next two theorems are the (slightly improved) original results of \cite[Theorems 3 and 4]{Kha89} reformulated in the setting of metric spaces and adopting the terminology and notation of the current article.
These theorems, which follow immediately from Theorems~\ref{Khanh+} and \ref{Khanh4+}, respectively, imply all the other results of
\cite{Kha86,Kha88,Kha89} as well as many open mapping and closed graph theorems and theorems of the Lyusternik type and results on approximation and semicontinuity
or their refinements; cf. the references in \cite{Kha86,Kha88,Kha89}.

\begin{theorem}\label{Khanh}
Let $t>0$ and $(x,t)\in\dom F$.
Suppose that, for each $y\in Y$,
\begin{equation}\label{B1}
F_0\iv(y)=\Limsup_{t\downarrow0}F_t\iv(y)
\end{equation}
and there are positive numbers $\rho$, $s$ and $b_n$ $(n=1,2,\ldots)$, such that
\begin{equation}\label{B2}
\sum_{n=1}^{\infty}b_n+s\le\rho.
\end{equation}
Suppose also that, for each $y\in F(x,t)$, there are numbers $c_n>0$ $(n=1,2,\ldots)$ and a function $m:(0,\infty)\to(0,\infty)$ satisfying \eqref{B3} and
\begin{gather}\label{B4}
d(u,F_{m(c_{1})}^{-1}(y))< s
\quad\mbox{for all}\quad
u\in F_{t}^{-1}(y)\cap B(x,\rho-s),
\\\label{B5}
d(u,F_{m(c_{n+1})}^{-1}(y))< b_n
\;\;\mbox{for all}\;\;
u\in F_{m(c_n)}^{-1}(y)\cap B(x,\rho-b_{n})\; (n=1,2,\ldots)\hspace{-.1cm}
\end{gather}
Then, $F(x,t) \subset F(B(x,\rho),0)$.
\end{theorem}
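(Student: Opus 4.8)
The plan is to derive Theorem~\ref{Khanh} as an immediate consequence of Theorem~\ref{Khanh+}: the whole argument reduces to relabelling the sequences so that the exceptional first term $s$ plays the role of $b_0$, and then checking that the single ``budget'' inequality \eqref{B2} reconciles the fixed‑radius balls of \eqref{B4}--\eqref{B5} with the growing‑radius balls of \eqref{B4+}--\eqref{B5+}.

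First I would fix an arbitrary $y\in F(x,t)$, so that $(x,t,y)\in\gph F$, let $(c_n)$ and $m$ be the data provided by the hypothesis for this $y$, and set $\tilde b_0:=s$ and $\tilde b_n:=b_n$ for $n\ge1$. By \eqref{B2}, $\sum_{n=0}^{\infty}\tilde b_n=s+\sum_{n=1}^{\infty}b_n\le\rho<\infty$, which is \eqref{A4} for $(\tilde b_n)$; in particular $\rho-s\ge\sum_{n\ge1}b_n>0$. The outer semicontinuity at $0$ of $\tau\mapsto\Phi(\tau):=F_\tau\iv(y)$ required by Theorem~\ref{Khanh+} is precisely the inclusion $\Limsup_{\tau\downarrow0}F_\tau\iv(y)\subset F_0\iv(y)$, which is guaranteed by \eqref{B1}, and condition \eqref{B3} is assumed verbatim.

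Next I would check \eqref{B4+} and \eqref{B5+} for $(\tilde b_n)$. For \eqref{B4+}: since $y\in F(x,t)$ we have $x\in F_t\iv(y)$, and $x\in B(x,\rho-s)$ because $\rho-s>0$, so \eqref{B4} applied with $u=x$ gives $d(x,F_{m(c_1)}\iv(y))<s=\tilde b_0$. For \eqref{B5+} the key point is the ball inclusion $B\big(x,\sum_{i=0}^{n-1}\tilde b_i\big)\subset B(x,\rho-b_n)$: indeed $\sum_{i=0}^{n-1}\tilde b_i=s+\sum_{i=1}^{n-1}b_i$, and $s+\sum_{i=1}^{n-1}b_i+b_n\le s+\sum_{i=1}^{\infty}b_i\le\rho$ by \eqref{B2}, so $\sum_{i=0}^{n-1}\tilde b_i\le\rho-b_n$. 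Hence every $u\in F_{m(c_n)}\iv(y)\cap B\big(x,\sum_{i=0}^{n-1}\tilde b_i\big)$ also lies in $F_{m(c_n)}\iv(y)\cap B(x,\rho-b_n)$, and \eqref{B5} yields $d\big(u,F_{m(c_{n+1})}\iv(y)\big)<b_n=\tilde b_n$, which is \eqref{B5+}.

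With all hypotheses of Theorem~\ref{Khanh+} in force for the sequence $(\tilde b_n)$, that theorem gives $d(x,F_0\iv(y))<\sum_{n=0}^{\infty}\tilde b_n\le\rho$, so $F_0\iv(y)\cap B(x,\rho)\ne\emptyset$; that is, there is $z\in B(x,\rho)$ with $y\in F(z,0)$, equivalently $y\in F(B(x,\rho),0)$. Since $y\in F(x,t)$ was arbitrary, $F(x,t)\subset F(B(x,\rho),0)$. I do not expect a genuine obstacle: the only thing needing care is the bookkeeping in the previous paragraph, namely that splitting the uniform budget $\rho$ as $s$ plus $\sum_{n\ge1}b_n$ — exactly as prescribed by \eqref{B2} — ensures the accumulated radius stays below $\rho$ before every iteration step, which is precisely what Theorem~\ref{Khanh+} requires; everything else is a transcription of the hypotheses.
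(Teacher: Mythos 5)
Your proof is correct and follows essentially the same route as the paper: relabel $b_0:=s$, use \eqref{B1} for the outer semicontinuity, observe that \eqref{B2} gives both \eqref{A4} and the radius estimate $s+\sum_{i=1}^{n-1}b_i\le\rho-b_n$ so that \eqref{B4} and \eqref{B5} imply \eqref{B4+} and \eqref{B5+}, and then invoke Theorem~\ref{Khanh+} for each $y\in F(x,t)$. The bookkeeping you flag (including the $n=0$ step $x\in F_t\iv(y)\cap B(x,\rho-s)$) is exactly the content of the paper's one-line verification, so no gap remains.
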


\begin{proof}
Set $b_0:=s$ and
take any $y\in F(x,t)$.
It follows from \eqref{B1} that the mapping $\tau\mapsto F_\tau\iv(y)$ on $\R_+$ is outer semicontinuous at $0$.
Condition \eqref{B2} obviously implies \eqref{A4}.
Observe that $\sum_{i=0}^{n-1}b_{i}\le \rho-\sum_{i=n}^\infty b_{i}<\rho-b_n$ $(n=0,1,\ldots)$
Hence, conditions \eqref{B4} and \eqref{B5} imply \eqref{B4+} and \eqref{B5+}, respectively.
By Theorem~\ref{Khanh+}, $y\in F(B(x,\rho),0)$.
\end{proof}

\begin{theorem}\label{Khanh4}
Let $M\subset X$ and $\Sigma(M)$ be a complete system.
Let a function $b:(0,\infty)\to (0,\infty)$ be given.
Suppose that, for each $y\in Y$, condition \eqref{B1} holds true and there exists a function $m:(0,\infty)\to(0,\infty)$ satisfying condition \eqref{mutau+} and, for all $\tau\in(0,\infty)$ and $x\in X$ with $(x,t,y)\in\gph F$ and $B(x,\mu(\tau))\in\Sigma(M)$, conditions \eqref{net++} and \eqref{mu+} are satisfied.
Then, for any $(x,t,y)\in\gph F$ with $t>0$ and $B(x,\mu(t))\in\Sigma(M)$, one has
$y\in F(B(x,\mu(t)),0)$.
\end{theorem}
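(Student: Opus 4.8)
The plan is to read Theorem~\ref{Khanh4} off Corollary~\ref{Khanh4+.1} (equivalently, Theorem~\ref{Khanh4+}), applied at each admissible point. Fix an arbitrary $(x,t,y)\in\gph F$ with $t>0$ and $B(x,\mu(t))\in\Sigma(M)$, where $m$ and $\mu$ are the data attached to this $y$, so $\mu$ is given by \eqref{mu+}; note that $B(x,\mu(t))\in\Sigma(M)$ tacitly forces $0<\mu(t)<\infty$ and, by completeness of $\Sigma(M)$, $B(x,r)\in\Sigma(M)$ for every $r\le\mu(t)$. It then suffices to check the hypotheses of Corollary~\ref{Khanh4+.1} for this triple, since its conclusion $d(x,F_0^{-1}(y))<\mu(t)$ is, by Remark~\ref{ha}.5, precisely the asserted inclusion $y\in F(B(x,\mu(t)),0)$.

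The four things to verify are: outer semicontinuity at $0$ of $\tau\mapsto F_\tau^{-1}(y)$; the implication \eqref{mutau+}; inequality \eqref{mu++}; and the covering estimate \eqref{net++} --- the last two for every $\tau>0$ with $\mu(\tau)\le\mu(t)$. The first is immediate from \eqref{B1}, which in particular gives $\Limsup_{\tau\downarrow0}F_\tau^{-1}(y)\subset F_0^{-1}(y)$; the implication \eqref{mutau+} is part of the assumption on $m$; and \eqref{mu++} holds (as an equality) for the relevant $\tau$ because $\mu$ is defined by \eqref{mu+}, cf.\ Remark~\ref{ha}.2.

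The only point needing a word is \eqref{net++}, and it is just a matter of reconciling quantifiers. The hypothesis supplies \eqref{net++} for every $\tau>0$ at every point $x'$ with $(x',\tau,y)\in\gph F$ and $B(x',\mu(\tau))\in\Sigma(M)$, whereas in Corollary~\ref{Khanh4+.1} the estimate \eqref{net++} is a statement ranging over all $u\in F_\tau^{-1}(y)\cap\{x'\mid B(x',\mu(\tau))\in\Sigma(M)\}$ and mentions no anchor point. These coincide: if that intersection is empty, \eqref{net++} holds vacuously; if not, any $x'$ in it satisfies $(x',\tau,y)\in\gph F$ and $B(x',\mu(\tau))\in\Sigma(M)$, so the hypothesis applies with this $x'$ and yields \eqref{net++}. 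Hence \eqref{net++} holds for all $\tau>0$, a fortiori for those with $\mu(\tau)\le\mu(t)$. Corollary~\ref{Khanh4+.1} now gives $d(x,F_0^{-1}(y))<\mu(t)$, i.e.\ some $z\in B(x,\mu(t))$ has $y\in F(z,0)$, so $y\in F(B(x,\mu(t)),0)$; as $(x,t,y)$ was an arbitrary admissible triple, the theorem follows. I anticipate no real obstacle here: the substance is already packed into Lemma~\ref{L1} and the corollaries deduced from it, and what remains is bookkeeping --- tracking which objects depend on $y$ and the harmless quantifier reconciliation just described.
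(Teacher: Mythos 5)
Your proposal is correct and follows essentially the same route as the paper: fix an admissible triple $(x,t,y)$, note that \eqref{B1} gives the outer semicontinuity of $\tau\mapsto F_\tau\iv(y)$ at $0$, use Remark~\ref{ha}.2 to pass from \eqref{mu+} to \eqref{mu++}, and invoke Corollary~\ref{Khanh4+.1}. The paper compresses the quantifier reconciliation for \eqref{net++} into the phrase ``all the assumptions of Corollary~\ref{Khanh4+.1} are satisfied,'' which you simply spell out.
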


\begin{proof}
Take any $(x,t,y)\in\gph F$ with $t>0$ and $B(x,\mu(t))\in\Sigma(M)$ and a function $m$ satisfying the assumptions of the theorem.
Condition \eqref{B1} obviously implies that the mapping $\tau\mapsto F_\tau\iv(y)$ on $\R_+$ is outer semicontinuous at $0$.
Thanks to Remark~\ref{ha}.2, all the assumptions of Corollary~\ref{Khanh4+.1} are satisfied.
Hence, $y\in F(B(x,\mu(t)),0)$.
\end{proof}

\begin{remark}
Comparing the statements of Theorem~\ref{Khanh4} and \cite[Theorem~4]{Kha89}, one can notice that the latter one looks stronger: it is formulated without assumption \eqref{mutau+} and with the stronger conclusion $F(x,t) \subset F(B(x,\mu(t)),0)$.
However assumption \eqref{mutau+} is implicitly used in the proof of \cite[Theorem~4]{Kha89} and the conclusion is established for a fixed $y\in F(x,t)$ satisfying $B(x,\mu(t))\in\Sigma(M)$.
(Observe that function $m$ in Theorem~\ref{Khanh4} and consequently function $\mu$ defined by \eqref{mu+} depend on the choice of $y\in F(x,t)$.)

Unlike the setting of the current article, in \cite{Kha89} mapping $F$ was assumed to be defined not on $X\times\R_+$, but on $X\times[0,t_0]$ where $t_0$ is a given positive number.
This difference can be easily eliminated by setting $F(x,t):=\emptyset$ when $t>t_0$ and making appropriate minor changes in the statements.
\end{remark}

\subsection{Lemma~\ref{L1} and Ekeland variational principle}

Lemma~\ref{L1} which lies at the core of the proofs of the statements in the previous subsection can serve as a substitution of the Ekeland variational principle which is a traditional tool when establishing regularity criteria.
This is demonstrated by the proof of such a criterion in the following theorem.

\begin{theorem}\label{T2.3}
Let $t>0$ and $(x,t,y)\in\gph F$.
Suppose that the mapping $\tau\mapsto F_\tau\iv(y)$ is outer semicontinuous on $[0,t)$
and there is a continuous nondecreasing function $\mu:[0,t]\to \R_+$
satisfying $\mu(\tau)=0$ if and only if $\tau=0$ and,
for each pair $(u,\tau)\in F\iv(y)$ with $\tau\in(0,t]$ and $d(x,u)\le \mu(t)-\mu(\tau)$,
there exists a pair $(u',\tau')\in F\iv(y)$ such that $u'\ne u$ and
\begin{equation}\label{mu-}
\mu(\tau')\le\mu(\tau)-d(u',u).
\end{equation}
Then,
$d(x,F_0\iv(y))\le\mu(t)$.
\end{theorem}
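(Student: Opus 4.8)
The plan is to re-run, by hand, the iterative scheme behind Lemma~\ref{L1}, invoking the descent hypothesis \eqref{mu-} exactly where a proof based on Ekeland's principle would invoke that principle. Concretely, I will build a sequence of pairs $(x_n,\tau_n)\in F\iv(y)$ with $(x_0,\tau_0):=(x,t)$ whose first components form a Cauchy sequence, whose limit lies in $F_0\iv(y)$, and which stays within distance $\mu(t)$ of $x$. Observe that $\mu(t)>0$ (otherwise $t=0$) and that \eqref{mu-} applies at $(x_0,\tau_0)$ since $d(x,x_0)=0=\mu(t)-\mu(\tau_0)$.

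\textbf{The recursion.} Given $(x_n,\tau_n)\in F\iv(y)$ with $\tau_n\in(0,t]$ and $d(x,x_n)\le\mu(t)-\mu(\tau_n)$, put $D_n:=\{(u,\tau)\in F\iv(y)\mid\tau\le t,\ d(u,x_n)+\mu(\tau)\le\mu(\tau_n)\}$ (this contains $(x_n,\tau_n)$) and $\iota_n:=\inf\{\mu(\tau)\mid(u,\tau)\in D_n\}$. By \eqref{mu-} there is a pair in $D_n$ with $\mu$-value strictly below $\mu(\tau_n)$, so $\iota_n<\mu(\tau_n)$; choose $(x_{n+1},\tau_{n+1})\in D_n$ with $\mu(\tau_{n+1})\le\tfrac12(\mu(\tau_n)+\iota_n)$. (If $\mu(\tau_n)=0$ already, then $\tau_n=0$, $x_n\in F_0\iv(y)$, $d(x,x_n)\le\mu(t)$, and we are done; otherwise the recursion continues.) Membership in $D_n$ gives $d(x_{n+1},x_n)\le\mu(\tau_n)-\mu(\tau_{n+1})$, whence $\sum_n d(x_{n+1},x_n)\le\mu(\tau_0)=\mu(t)$ and also $d(x,x_{n+1})\le\mu(t)-\mu(\tau_{n+1})$, so the recursion perpetuates. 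Consequently $(x_n)$ is Cauchy ($X$ complete), $x_n\to z$ with $d(x,z)\le\mu(t)$; the numbers $\mu(\tau_n)$ decrease to some $\ell\ge0$; and, $\mu$ being nondecreasing, the $\tau_n$ decrease to some $\hat\tau$ with $\mu(\hat\tau)=\ell$ by continuity of $\mu$. Since $\iota_n<\mu(\tau_n)$ forces $\mu(\tau_{n+1})<\mu(\tau_n)$, the $\tau_n$ strictly decrease, so $\hat\tau<t$. The triangle inequality shows $D_{n+1}\subseteq D_n$, hence $(\iota_n)$ is nondecreasing; combined with $\iota_n\le\mu(\tau_{n+1})$ and $\mu(\tau_{n+1})\le\tfrac12(\mu(\tau_n)+\iota_n)$ this forces $\iota_n\to\ell$.

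\textbf{The crux: $\ell=0$.} Suppose $\ell>0$. Outer semicontinuity of $\tau\mapsto F_\tau\iv(y)$ at $\hat\tau\in[0,t)$, applied to $x_n\in F_{\tau_n}\iv(y)$ with $\tau_n\to\hat\tau$ and $x_n\to z$, gives $z\in F_{\hat\tau}\iv(y)$; letting $n\to\infty$ in $d(x,x_n)\le\mu(t)-\mu(\tau_n)$ gives $d(x,z)\le\mu(t)-\mu(\hat\tau)$. Hence \eqref{mu-} applies at $(z,\hat\tau)$ (here $\hat\tau\in(0,t]$), yielding $(z',\tau')\in F\iv(y)$ with $z'\ne z$ and $\mu(\tau')\le\mu(\hat\tau)-d(z',z)=\ell-d(z',z)<\ell$. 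Using $d(z,x_n)\le\sum_{k\ge n}d(x_{k+1},x_k)\le\mu(\tau_n)-\ell$ and the triangle inequality one checks $d(z',x_n)+\mu(\tau')\le\mu(\tau_n)$, i.e.\ $(z',\tau')\in D_n$ for every $n$; thus $\iota_n\le\mu(\tau')<\ell$ for all $n$, contradicting $\iota_n\to\ell$. Therefore $\ell=0$, so $\mu(\tau_n)\to0$ and, as $\mu(\tau)=0$ only at $\tau=0$, $\tau_n\to0$.

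\textbf{Conclusion.} We are now in the situation underlying Lemma~\ref{L1}: with $\Phi:=F_\cdot\iv(y)$, $a_n:=\tau_n\downarrow0$, the iterates $x_n\in\Phi(a_n)$ satisfy $\sum_n d(x_n,x_{n+1})\le\mu(t)<\infty$, so the concluding argument in the proof of Lemma~\ref{L1} — the limit $z$ lies in $\Phi(0)$ by outer semicontinuity at $0$ — applies verbatim and gives $z\in F_0\iv(y)$; together with $d(x,z)\le\mu(t)$ this yields $d(x,F_0\iv(y))\le\mu(t)$. I expect the only genuine difficulty to be the step ``$\ell=0$'': a greedy one-step use of \eqref{mu-} need not drive $\mu(\tau_n)$ down to $0$, and it is the near-minimal (Ekeland-type) selection of $(x_{n+1},\tau_{n+1})$, together with the nestedness $D_{n+1}\subseteq D_n$, that rules out a positive limit — this is precisely the point where \eqref{mu-} substitutes for the variational principle. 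The only bookkeeping to watch is keeping every $\tau$-argument in $[0,t]$ (the domain of $\mu$) and checking $\hat\tau<t$, so that the available outer semicontinuity on $[0,t)$ can be used at $\hat\tau$.
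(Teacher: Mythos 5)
Your proof is correct and follows essentially the same route as the paper's: the same Ekeland-type near-infimum selection over the admissible sets (your $\iota_n$ over $D_n$ is exactly the paper's $c_n$ in \eqref{T2.3.3}, with the same ``halfway to the infimum'' choice), the same passage to a limit pair, and the same contradiction with hypothesis \eqref{mu-} at that pair when the limit of the $\mu$-values is positive. The only cosmetic difference is that the paper deliberately channels the Cauchy-limit step through Lemma~\ref{L1} via an auxiliary mapping $\Phi$ (to showcase the Induction theorem), whereas you obtain the limit directly from completeness and outer semicontinuity.
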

\begin{proof}
Set $a_0:=t$, $\bx:=x$ and define a sequence $\{(x_n,a_n)\}$ by induction.
For any $n=0,1,\ldots$, let a pair $(x_n,a_n)\in F\iv(y)$ with $a_n\in[0,t]$ and $d(x,x_n)\le \mu(t)-\mu(a_n)$ be given.
If $a_n=0$, set $a_{n+1}:=0$ and $x_{n+1}:=x_n$.
Otherwise, define
\begin{equation}\label{T2.3.3}
c_n:=\inf\{\mu(\tau)\mid (u,\tau)\in F\iv(y),\;
\mu(\tau)\le\mu(a_n)-d(u,x_n)\}.
\end{equation}
By the assumptions of the theorem, $0\le c_n<\mu(a_n)$, and one can choose a pair $(x_{n+1},a_{n+1})\in F\iv(y)$ such that $x_{n+1}\ne x_n$ and
\begin{gather}\label{T2.3.1}
\mu(a_{n+1})\le\mu(a_n)-d(x_n,x_{n+1}),
\\\label{T2.3.2}
c_n\le\mu(a_{n+1})<\frac{\mu(a_n)+c_n}{2}<\mu(a_n).
\end{gather}
It also follows from \eqref{T2.3.1} that
$$
d(x,x_{n+1})\le d(x,x_{n})+d(x_n,x_{n+1})\le \mu(t)-\mu(a_{n+1}).
$$

If $a_n=0$ for some $n>0$, then, by \eqref{T2.3.1},
\begin{equation*}
d(x,F_0\iv(y))\le d(x,x_n)\le\sum_{j=0}^{n-1}d(x_j,x_{j+1})\le\mu(t).
\end{equation*}

Now assume that $a_n>0$ for all $n=0,1,\ldots$.
Then, $\{a_n\}$ is a decreasing sequence of positive numbers which converges to some $a\ge0$.
We are going to show that $a=0$.
Suppose that $a>0$ and denote $\hat a_n:=a_n-a$.
Obviously, $\hat a_n>0$ and $\hat a_n\downarrow0$.
By \eqref{T2.3.1},
\begin{equation*}
\sum_{n=0}^{\infty}d(x_n,x_{n+1}) \le\mu(t)-\mu(a).
\end{equation*}
Fix an $\eps>0$ and choose numbers $b_n>d(x_n,x_{n+1})$ such that $\sum_{n=0}^{\infty}b_n< \mu(t)-\mu(a)+\eps$.
Set $\Phi(\hat a_n):=\{x_n\}$, $\Phi(\tau):=\emptyset$ for any $\tau\in(0,\infty) \setminus\{\hat a_0,\hat a_1,\ldots\}$, and let $\Phi(0)$ be the set of all cluster points of $\{x_n\}$.
Then, $x\in\Phi(\hat a_0)$, $\Phi$ is outer semicontinuous at $0$ and $d(\Phi(\hat a_n),\Phi(\hat a_{n+1}))<b_n$.
It follows from Lemma~\ref{L1} that there exists a $z\in \Phi(0)$ satisfying $d(x,z)< \mu(t)-\mu(a)+\eps$.
By the outer semicontinuity of $\Phi$, $y\in F(z,a)$.

Since $a>0$, by the assumptions of the theorem, there exists a pair $(u,\tau)\in F\iv(y)$ such that $u\ne z$ and
\begin{equation}\label{T2.3.4}
\mu(\tau)\le\mu(a)-d(u,z).
\end{equation}
Then, $\mu(\tau)<\mu(a)$.
Observe from \eqref{T2.3.2} that
\begin{equation*}
2\mu(a_{n+1})-\mu(a_n)<c_n<\mu(a_n).
\end{equation*}
Hence, $\{c_n\}$ converges to $\mu(a)$ and consequently $\mu(\tau)<c_n$ when $n$ is large enough.
By definition \eqref{T2.3.3}, this yields
\begin{equation}\label{T2.3.5}
\mu(\tau)>\mu(a_n)-d(u,x_n).
\end{equation}
At the same time,
\begin{equation*}
d(x_n,z)\le \sum_{j=n}^{\infty}d(x_j,x_{j+1}) \le\mu(a_n)-\mu(a).
\end{equation*}
This combined with \eqref{T2.3.4} gives
\begin{equation*}
\mu(\tau)\le\mu(a_n)-d(u,x_n)
\end{equation*}
which is in obvious contradiction with \eqref{T2.3.5}.
Hence, $a=0$, $z\in F_0\iv(y)$, $d(x,z)< \mu(t)+\eps$, and, as $\eps$ is arbitrary, $d(x,F_0\iv(y))\le\mu(t)$.
\end{proof}

The proof of Theorem~\ref{T2.3} given above relies on Lemma~\ref{L1} and uses standard arguments typical for traditional proofs of the Ekeland variational principle; cf. e.g. \cite{BorZhu05}.
We next show that the latter classical result can also be established as a consequence of Lemma~\ref{L1}.

\begin{theorem}[Ekeland variational principle] \label{Eke}
Let $X$ be a complete metric space and $f: X\to\R\cup\{+\infty\}$ be lower semicontinuous and bounded from below.
Suppose $\varepsilon>0$, $\lambda>0$ and $x\in X$ satisfies
$$
f(x)<\inf_X f + \varepsilon.
$$
Then, there exists a $z\in X$ such that
\begin{enumerate}
\item[\rm(i)]
$d(z,x)<\lambda $,
\item[\rm(ii)]
$f(z)\le f(x)$,
\item[\rm(iii)]
$f(u)+(\varepsilon/\lambda)d(u,z)\ge f(z)$ for all $u\in X$.
\end{enumerate}
\end{theorem}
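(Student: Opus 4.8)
The plan is to derive the Ekeland variational principle from Lemma~\ref{L1} by building an explicit decreasing sequence and encoding it as a set-valued mapping $\Phi:\R_+\rightrightarrows X$ to which the lemma applies. First I would rescale: replacing $f$ by $(\varepsilon/\lambda)^{-1}f$ (equivalently, working with the metric $(\varepsilon/\lambda)d$), it suffices to treat the case $\varepsilon=\lambda$, so that $f(x)<\inf_X f+\lambda$ and I seek $z$ with $d(z,x)<\lambda$, $f(z)\le f(x)$, and $f(u)+d(u,z)\ge f(z)$ for all $u$. Set $x_0:=x$ and construct $x_{n+1}$ recursively from $x_n$ by the standard Ekeland selection: let $S_n:=\{u\in X\mid f(u)+d(u,x_n)\le f(x_n)\}$ (note $x_n\in S_n$), put $c_n:=\inf_{S_n}f$, and choose $x_{n+1}\in S_n$ with $f(x_{n+1})<(f(x_n)+c_n)/2$ — the usual ``halfway'' choice that forces $f(x_n)\downarrow$ to a limit and $c_n$ to the same limit. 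From $x_{n+1}\in S_n$ we get $d(x_n,x_{n+1})\le f(x_n)-f(x_{n+1})$, so $\sum_{n\ge0}d(x_n,x_{n+1})\le f(x_0)-\inf_X f<\lambda$; fix $\eta>0$ with $\sum d(x_n,x_{n+1})+\eta<\lambda$ and pick $b_n>d(x_n,x_{n+1})$ with $\sum_{n\ge0}b_n<\lambda$, which gives \eqref{A4}.

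Next I would feed this into Lemma~\ref{L1}. Choose any strictly decreasing null sequence $a_n\downarrow0$ with $a_0:=t$ for some convenient $t>0$ (the actual values of $a_n$ are irrelevant — only the indexing matters), define $\Phi(a_n):=\{x_n\}$, $\Phi(\tau):=\emptyset$ for $\tau\in(0,\infty)\setminus\{a_0,a_1,\ldots\}$, and let $\Phi(0)$ be the set of cluster points of $(x_n)$. Then $x\in\Phi(t)$, condition \eqref{A2} holds by construction, and \eqref{A3} holds because $\Phi(a_n)\cap U_n\subset\{x_n\}$ and $d(x_n,x_{n+1})<b_n$ with $x_{n+1}\in\Phi(a_{n+1})$. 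Moreover $\Phi$ is outer semicontinuous at $0$: its graph over $(0,\infty)$ is the discrete set $\{(a_n,x_n)\}$ plus the cluster points at $0$, so $\Limsup_{\tau\to0}\Phi(\tau)$ is exactly the set of cluster points, which equals $\Phi(0)$. Lemma~\ref{L1} then yields $z\in\Phi(0)$ with $d(x,z)<\sum_n b_n<\lambda$, which is (i); and by lower semicontinuity $f(z)\le\liminf f(x_n)\le f(x_0)=f(x)$, giving (ii).

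It remains to verify (iii), and this is where the main work lies — it is the one place where the specific ``halfway'' selection and the minimality property of $z$ interact. The argument mirrors the proof of Theorem~\ref{T2.3}: suppose there is $u\in X$ with $f(u)+d(u,z)<f(z)$. Since $z$ is a cluster point of $(x_n)$ and $f(x_n)\downarrow$, we have $f(z)\le\lim f(x_n)=\lim c_n$; choose $n$ large with $f(u)+d(u,z)<c_n$ and (using $d(x_n,z)\le\sum_{j\ge n}d(x_j,x_{j+1})\to0$, which follows from $x_n\to z$ along the convergent subsequence together with telescoping) with $d(x_n,z)$ small enough that $f(u)+d(u,x_n)\le f(u)+d(u,z)+d(z,x_n)\le f(x_n)$, i.e.\ $u\in S_n$. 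But then $c_n=\inf_{S_n}f\le f(u)<c_n$, a contradiction. Hence (iii) holds. The subtlety to handle carefully is that the cluster point $z$ need not be the $f$-limit of the whole sequence unless one argues, as above, through the $S_n$'s and the monotonicity of $(f(x_n))$; once that is in place the contradiction is immediate. Finally, undoing the rescaling restores the factor $\varepsilon/\lambda$ in (iii) and the bound $\lambda$ in (i).
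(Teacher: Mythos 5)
Your proposal is correct in substance and follows the same overall strategy as the paper: build an Ekeland-type iterative sequence $(x_n)$, encode it as a mapping $\Phi$ supported on a strictly decreasing null sequence of parameter values with $\Phi(0)$ the cluster set, and let Lemma~\ref{L1} produce the point $z$ with $d(x,z)<\sum_n b_n<\lambda$. The difference is internal to the iteration: the paper works with the sup-excess $a_n:=\sup_{u}\{f(x_n)-f(u)-(\varepsilon/\lambda)d(u,x_n)\}$, chooses $x_{n+1}$ to realize at least half of it, uses these very numbers as the parameters of $\Phi$ after showing $a_{n+1}\le a_n/2$, and obtains (iii) by passing to the limit in $f(x_n)-f(u)-(\varepsilon/\lambda)d(u,x_n)\le a_n$; you instead use the classical selection through $S_n$ and $c_n=\inf_{S_n}f$ with the halfway rule on function values, and prove (iii) by contradiction via $c_n\to\lim f(x_n)$. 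Both are standard variants; your remark that the actual parameter values fed to Lemma~\ref{L1} are irrelevant (only strict decrease to $0$ matters) is correct and simplifies the bookkeeping slightly, while the paper's choice makes the limit passage to (iii) immediate. The rescaling to $\varepsilon=\lambda$ is harmless.

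One step as written does fail and must be patched: the selection of $x_{n+1}\in S_n$ with $f(x_{n+1})<(f(x_n)+c_n)/2$ is impossible when $c_n=f(x_n)$, i.e.\ when $x_n$ already minimizes $f$ over $S_n$. This is exactly the analogue of the case $a_n=0$ which the paper treats separately, and it is the easy case: if $c_n=f(x_n)$, then $z:=x_n$ already satisfies (iii) (for $u\in S_n$ one has $f(u)\ge c_n=f(x_n)$, while for $u\notin S_n$ the defining inequality of $S_n$ fails, giving $f(u)+d(u,x_n)>f(x_n)$), and (i), (ii) follow from the telescoped bound $d(x,x_n)\le f(x)-f(x_n)<\lambda$ and the monotonicity of $(f(x_n))$. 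With this terminating case added (or with the halfway rule relaxed to a non-strict inequality, allowing $x_{n+1}=x_n$ and an eventually constant sequence), your argument is complete; the remaining ingredients --- the choice of the $b_n$, the verification of \eqref{A2}, \eqref{A3} and of the outer semicontinuity of $\Phi$ at $0$, and the contradiction argument for (iii) using $d(x_n,z)\le\sum_{j\ge n}d(x_j,x_{j+1})$ --- are all sound.
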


\begin{proof}
Denote $\bx:=x$.
For $n=0,1,\ldots$, set
\begin{equation}\label{Eke1}
a_n:=\sup_{u\in X}\left\{f(x_n)-f(u)-\frac{\eps}{\la}d(u,x_n)\right\}.
\end{equation}
Obviously, $0\le a_n<\infty$.
Choose an $x_{n+1}$ such that
\begin{equation}\label{Eke2}
f(x_n)-f(x_{n+1})- \frac{\eps}{\la}d(x_{n+1},x_n) \ge\frac{a_n}{2}.
\end{equation}
Then, for $n=0,1,\ldots$,
$$
f(x_{n+1})\le f(x_n),
\quad
d(x_{n+1},x_n)\le \frac{\la}{\eps}(f(x_n)-f(x_{n+1}))
$$
and the inequalities are strict if $a_n>0$.
It follows that
$$
f(x_{n})\le f(x)
\quad\mbox{and}\quad
d(x_{n},x)\le \frac{\la}{\eps}(f(x)-f(x_{n}))<\la.
$$
If, for some $n$, $a_n=0$, then $z:=x_n$ satisfies the conclusions of the theorem.
Suppose that $a_n>0$ for all $n=0,1,\ldots$.
Then, $b_n:=\frac{\la}{\eps}(f(x_n)-f(x_{n+1}))>0$.
Set $\Phi(a_n):=\{x_n\}$, $\Phi(\tau):=\emptyset$ for any $\tau\in(0,\infty) \setminus\{a_0,a_1,\ldots\}$ and $\Phi(0):= \Limsup_{\tau\downarrow0} \Phi(\tau)$.
Hence, $\Phi$ is outer semicontinuous at $0$, $x\in\Phi(a_0)$, $\sum_{n=0}^{\infty}b_n<\la$ and $d(\Phi(a_n),\Phi(a_{n+1}))<b_n$.
Besides, it follows from \eqref{Eke1} that
\begin{equation}\label{Eke3}
f(x_{n})-f(u)-\frac{\eps}{\la}d(u,x_{n})\le a_n
\quad\mbox{for all}\quad
u\in X.
\end{equation}
Subtracting \eqref{Eke2} from the last inequality and using the triangle inequality, we conclude that
\begin{equation*}
f(x_{n+1})-f(u)-\frac{\eps}{\la}d(u,x_{n+1})\le \frac{a_n}{2}
\quad\mbox{for all}\quad u\in X,
\end{equation*}
i.e., $a_{n+1}\le a_n/2$ and consequently $a_n\downarrow0$ as $n\to\infty$.
It follows from Lemma~\ref{L1} that there exists a $z\in \Phi(0)$ satisfying (i).
By the definition of $\Phi(0)$ and \eqref{Eke3}, we conclude that conditions (ii) and (iii) are satisfied too.
\end{proof}

\begin{remark}
Lemma \ref{L1} was used in the proof of Theorem~\ref{Eke} where one would normally use the convergence of a Cauchy sequence.
Similarly, the Ekeland variational principle can replace the Cauchy sequence argument in the proof of Lemma \ref{L1}.
In fact, both Lemma \ref{L1} and Theorem~\ref{Eke} are in a sense equivalent to the completeness of $X$.
\end{remark}

\subsection{Regularity}

Lemma \ref{L1} and the other results in Subsection~\ref{BE} provide a collection of basic estimates which are going to be used when establishing regularity criteria.
Theorems~\ref{Khanh+}, \ref{Khanh4+} and \ref{T2.3} and Corollaries~\ref{Khanh4+.1} and \ref{Khanh4+.2} were formulated for a fixed point $(x,t,y)\in\gph F$.
The next step is to ``set variable $t$ free'' and formulate criteria for a fixed point $(x,y)$ such that $(x,t,y)\in\gph F$ for some $t>0$.
Once variable $t$ is free, it is natural to take infimum over $t$ in the \RHS s of the inequalities in the conclusions of the statements in Subsection~\ref{BE} to obtain the best possible estimates.
Under the natural assumption of monotonicity of the function $\mu$ involved in most of the statements, this is equivalent to evaluating the infimum of $t>0$ such that $(x,t,y)\in\gph F$.
This way the ``distance-like'' quantity $\delta(y,F,x)$ defined by \eqref{del} comes into play.

The next several assertions are immediate consequences of Theorems~\ref{Khanh+}, \ref{Khanh4+} and \ref{T2.3} and Corollaries~\ref{Khanh4+.1} and \ref{Khanh4+.2}, respectively.
As an illustration, we provide a short proof of the first one.

\begin{theorem}\label{Kh}
Let $(x,y)\in X\times Y$ and $\mu:\R_+\to\R_+$ be an \usc\ nondecreasing function.
Suppose that the mapping $\tau\mapsto F_\tau\iv(y)$ on $\R_+$ is outer semicontinuous at $0$ and, for some $\ga>\delta(y,F,x)$ and any $t\in(0,\ga)$ with $(x,t,y)\in\gph F$,
there are sequences of positive numbers $(b_n)$ and $(c_n)$ and a function $m:(0,\infty)\to(0,\infty)$ such that conditions \eqref{B3}--\eqref{B5+} hold true and
\begin{equation}\label{A4-}
\sum_{n=0}^{\infty}b_n\le\mu(t).
\end{equation}
Then,
$d(x,F_0\iv(y))\le\mu(\delta(y,F,x))$.
\end{theorem}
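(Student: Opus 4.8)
The plan is to obtain Theorem~\ref{Kh} from Theorem~\ref{Khanh+} by applying the latter at points $(x,t_k,y)\in\gph F$ with $t_k$ decreasing to $\delta(y,F,x)$, and then letting $k\to\infty$. To begin, observe that, since $\ga>\delta(y,F,x)$, the set $S:=\{t>0\mid(x,t,y)\in\gph F\}$ is nonempty (otherwise $\delta(y,F,x)=+\infty$, contradicting $\ga>\delta(y,F,x)$) and, by definition \eqref{del}, $\delta(y,F,x)=\inf S<\ga$; hence one can pick a sequence $(t_k)$ with $t_k\in S\cap(0,\ga)$ and $t_k\downarrow\delta(y,F,x)$.

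Next, for each fixed $k$ I would apply Theorem~\ref{Khanh+} at the point $(x,t_k,y)\in\gph F$. Its hypotheses hold: outer semicontinuity of $\tau\mapsto F_\tau\iv(y)$ at $0$ is assumed in Theorem~\ref{Kh}; the sequences $(b_n),(c_n)$ and the function $m$ furnished by the present hypothesis for $t=t_k$ satisfy \eqref{B3}, \eqref{B4+} and \eqref{B5+}; and \eqref{A4} holds because \eqref{A4-} gives $\sum_{n=0}^{\infty}b_n\le\mu(t_k)<\infty$. Theorem~\ref{Khanh+} then yields
\[
d(x,F_0\iv(y))<\sum_{n=0}^{\infty}b_n\le\mu(t_k).
\]
The sequences and the function $m$ (and hence the intermediate bound $\sum_{n}b_n$) depend on $k$ through $t_k$, but only the scalar inequality $d(x,F_0\iv(y))<\mu(t_k)$ is retained.

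Finally I would pass to the limit $k\to\infty$. From $d(x,F_0\iv(y))<\mu(t_k)$ for every $k$ it follows that $d(x,F_0\iv(y))\le\liminf_{k\to\infty}\mu(t_k)\le\limsup_{k\to\infty}\mu(t_k)$, and the upper semicontinuity of $\mu$ at $\delta(y,F,x)$, applied to the sequence $t_k\to\delta(y,F,x)$, gives $\limsup_{k\to\infty}\mu(t_k)\le\mu(\delta(y,F,x))$; combining these yields $d(x,F_0\iv(y))\le\mu(\delta(y,F,x))$. The monotonicity of $\mu$ is not strictly needed for this last step, but it makes the $\mu(t_k)$ nonincreasing, so that $\lim_{k\to\infty}\mu(t_k)=\lim_{\tau\downarrow\delta(y,F,x)}\mu(\tau)$ exists and is $\le\mu(\delta(y,F,x))$, matching the ``take the infimum over $t$'' heuristic discussed before the theorem.

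The proof is short and no step is a genuine obstacle; the only point requiring care is the limiting argument, where the definition of $\delta(y,F,x)$ as an infimum must be used to produce an admissible sequence $t_k\downarrow\delta(y,F,x)$, and where the upper semicontinuity (together with the monotonicity) of $\mu$ is used to replace the running bounds $\mu(t_k)$ by $\mu(\delta(y,F,x))$. The degenerate case $\delta(y,F,x)=0$, in which the claim reduces to $d(x,F_0\iv(y))\le\mu(0)$, is covered by the same computation (and is in any event immediate, since $x\in F_{t_k}\iv(y)$ with $t_k\downarrow0$ forces $x\in F_0\iv(y)$ by the assumed outer semicontinuity at $0$).
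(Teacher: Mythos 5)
Your proposal is correct and follows essentially the same route as the paper: for each admissible $t$ (the paper takes all $t\in(0,\ga)$ with $(x,t,y)\in\gph F$, you take a sequence $t_k\downarrow\delta(y,F,x)$), condition \eqref{A4-} gives \eqref{A4}, Theorem~\ref{Khanh+} yields $d(x,F_0\iv(y))<\mu(t)$, and one passes to the infimum/limit in $t$ using the monotonicity and upper semicontinuity of $\mu$. Your explicit treatment of the limiting step (and of the degenerate cases) is just a more detailed write-up of the paper's one-line argument.
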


\begin{proof}
It is sufficient to notice that, for any $t\in(0,\ga)$ with $(x,t,y)\in\gph F$, condition \eqref{A4-} implies \eqref{A4} and, by Theorem~\ref{Khanh+},
$d(x,F_0\iv(y))<\mu(t)$.
Taking the infimum in the \RHS\ of the above inequality
over all $t>0$ with $(x,t,y)\in\gph F$ and making use of the monotonicity of $\mu$, we arrive at the claimed conclusion.
\end{proof}

\begin{theorem}\label{Kh4}
Let $(x,y)\in X\times Y$ and $\mu:\R_+\to\R_+$ be an \usc\ nondecreasing function.
Suppose that the mapping $\tau\mapsto F_\tau\iv(y)$ on $\R_+$ is outer semicontinuous at $0$ and, for some $\ga>\delta(y,F,x)$ and any $t\in(0,\ga)$ with $(x,t,y)\in\gph F$,
there are functions $b,m:(0,\infty)\to (0,\infty)$ such that condition \eqref{mutau+} is satisfied
and, for each $\tau>0$ with $\mu(\tau)\le \mu(t)$, conditions \eqref{mu++} and \eqref{net+++} hold true.
Then,
$d(x,F_0\iv(y))\le\mu(\delta(y,F,x))$.
\end{theorem}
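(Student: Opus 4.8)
The plan is to reduce Theorem~\ref{Kh4} to the already-established Theorem~\ref{Khanh4+} by the same ``set $t$ free and take infimum'' device that was used for Theorem~\ref{Kh}. First I would fix an arbitrary $t\in(0,\ga)$ with $(x,t,y)\in\gph F$ and observe that for this particular $t$ all the hypotheses of Theorem~\ref{Khanh4+} are exactly what is being assumed here: the outer semicontinuity of $\tau\mapsto F_\tau\iv(y)$ at $0$, the existence of $b,m:(0,\infty)\to(0,\infty)$ with \eqref{mutau+}, and the validity of \eqref{mu++} and \eqref{net+++} for every $\tau>0$ with $\mu(\tau)\le\mu(t)$. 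Here one takes $\mu$ itself as the third function required by Theorem~\ref{Khanh4+} (the $\mu$ in that theorem is only required to satisfy \eqref{mu++}, which is precisely what is assumed). Theorem~\ref{Khanh4+} then yields $d(x,F_0\iv(y))<\mu(t)$.

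The second step is to pass to the infimum. Since the strict inequality $d(x,F_0\iv(y))<\mu(t)$ holds for \emph{every} $t\in(0,\ga)$ with $(x,t,y)\in\gph F$, I would take the infimum of the \RHS\ over all such $t$. By definition \eqref{del}, $\delta(y,F,x)=\inf\{t>0\mid(x,t,y)\in\gph F\}$, and since $(x,t,y)\in\gph F$ by assumption for at least one $t<\ga$, this set of admissible $t$ is nonempty and its infimum is $\delta(y,F,x)<\ga$. Because $\mu$ is nondecreasing, $\inf_t\mu(t)$ over the admissible $t\in(0,\ga)$ equals $\lim_{t\downarrow\delta(y,F,x)}\mu(t)$ (taking values of $\mu$ along a sequence $t_k\downarrow\delta(y,F,x)$ of admissible parameters); and because $\mu$ is upper semicontinuous, $\limsup_{t\downarrow\delta(y,F,x)}\mu(t)\le\mu(\delta(y,F,x))$. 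Hence $d(x,F_0\iv(y))\le\mu(\delta(y,F,x))$, which is the claimed conclusion.

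The only genuinely delicate point — and the one I would be careful to spell out — is the interplay of monotonicity and upper semicontinuity of $\mu$ at the boundary value $\delta(y,F,x)$. Monotonicity alone gives that $\mu(t)$ decreases toward some limit $\ell\ge 0$ as $t\downarrow\delta(y,F,x)$ along admissible parameters, and that $\ell\ge\mu(\delta(y,F,x))$ is \emph{not} automatic; it is upper semicontinuity that supplies $\ell\le\mu(\delta(y,F,x))$, so that the infimum of the right-hand sides is exactly $\mu(\delta(y,F,x))$ or smaller. If one only cared about getting \emph{some} finite bound this subtlety could be skipped, but to land precisely on $\mu(\delta(y,F,x))$ both properties of $\mu$ are used, exactly as in the proof of Theorem~\ref{Kh}. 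Everything else is a verbatim citation of Theorem~\ref{Khanh4+}, so the proof is genuinely as short as the remark preceding it promises.

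\begin{proof}
Fix any $t\in(0,\ga)$ with $(x,t,y)\in\gph F$. The outer semicontinuity of $\tau\mapsto F_\tau\iv(y)$ at $0$, condition \eqref{mutau+}, and the validity of \eqref{mu++} and \eqref{net+++} for all $\tau>0$ with $\mu(\tau)\le\mu(t)$ are precisely the hypotheses of Theorem~\ref{Khanh4+} (with the role of $\mu$ in that theorem played by the present function $\mu$, which satisfies \eqref{mu++}). Hence, by Theorem~\ref{Khanh4+},
\begin{equation*}
d(x,F_0\iv(y))<\mu(t).
\end{equation*}
Since this holds for every $t\in(0,\ga)$ with $(x,t,y)\in\gph F$, we may take the infimum of the \RHS\ over all such $t$. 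By \eqref{del} and the assumption $\ga>\delta(y,F,x)$, the set of admissible $t$ is nonempty with infimum $\delta(y,F,x)$. Choosing admissible parameters $t_k\downarrow\delta(y,F,x)$ and using that $\mu$ is nondecreasing and \usc,
\begin{equation*}
d(x,F_0\iv(y))\le\inf_k\mu(t_k)=\limsup_{k\to\infty}\mu(t_k)\le\mu(\delta(y,F,x)),
\end{equation*}
which is the desired estimate.
\end{proof}
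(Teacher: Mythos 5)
Your proposal is correct and follows exactly the route the paper intends: the paper proves only Theorem~\ref{Kh} explicitly and declares Theorem~\ref{Kh4} an immediate consequence of Theorem~\ref{Khanh4+} by the same device of applying the fixed-$t$ result and then taking the infimum over admissible $t$, using the monotonicity (and, as you rightly note, the upper semicontinuity) of $\mu$. Your added care about why $\inf_t\mu(t)$ lands at $\mu(\delta(y,F,x))$ rather than only at the right-hand limit is a correct and worthwhile elaboration of the step the paper compresses into one sentence.
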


\begin{corollary}\label{Kh4.1}
Let $M\subset X$ and $\Sigma(M)$ be a complete system, $(x,y)\in X\times Y$ and $\mu:\R_+\to\R_+$ be an \usc\ nondecreasing function.
Suppose that the mapping $\tau\mapsto F_\tau\iv(y)$ on $\R_+$ is outer semicontinuous at $0$ and, for some $\ga>\delta(y,F,x)$ and any $t\in(0,\ga)$ with $(x,t,y)\in\gph F$, one has $B(x,\mu(t))\in\Sigma(M)$,
there are functions $b,m:(0,\infty)\to (0,\infty)$ such that condition \eqref{mutau+} is satisfied
and, for each $\tau>0$, conditions \eqref{mu++} and \eqref{net++} hold true.
Then,
$d(x,F_0\iv(y))\le\mu(\delta(y,F,x))$.
\end{corollary}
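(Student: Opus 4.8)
The plan is to derive the assertion from Corollary~\ref{Khanh4+.1} by the same ``setting variable $t$ free'' device used in the proof of Theorem~\ref{Kh}. First I would fix an arbitrary $t\in(0,\ga)$ with $(x,t,y)\in\gph F$ and verify that the hypotheses of Corollary~\ref{Khanh4+.1} are met: the mapping $\tau\mapsto F_\tau\iv(y)$ is outer semicontinuous at $0$; $B(x,\mu(t))\in\Sigma(M)$ by assumption; condition \eqref{mutau+} holds; and, since \eqref{mu++} and \eqref{net++} are assumed for \emph{every} $\tau>0$, they hold in particular for every $\tau>0$ with $\mu(\tau)\le\mu(t)$. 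Note that \eqref{mu++} forces $\mu(\tau)\ge m(\tau)>0$ for all $\tau>0$, so the restriction of $\mu$ to $(0,\infty)$ is indeed $(0,\infty)$-valued as required by Corollary~\ref{Khanh4+.1}. That corollary then yields $d(x,F_0\iv(y))<\mu(t)$ for each such $t$.

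It remains to pass to the infimum over admissible $t$. Since $\ga>\delta(y,F,x)$, the definition \eqref{del} of $\delta(y,F,x)$ provides a sequence $t_k\downarrow\delta(y,F,x)$ with $(x,t_k,y)\in\gph F$, and $t_k\in(0,\ga)$ for all large $k$; applying the previous step to each such $t_k$ gives $d(x,F_0\iv(y))<\mu(t_k)$. Letting $k\to\infty$ and using the upper semicontinuity of $\mu$ at $\delta(y,F,x)$ — monotonicity alone would only bound $\mu$ from below along $t_k\downarrow\delta(y,F,x)$, giving the wrong inequality in the limit — I obtain
$$d(x,F_0\iv(y))\le\limsup_{k\to\infty}\mu(t_k)\le\mu(\delta(y,F,x)),$$
which is the claim.

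There is no real obstacle here: the argument is a near-verbatim analogue of the given proof of Theorem~\ref{Kh}, with Corollary~\ref{Khanh4+.1} in the role of Theorem~\ref{Khanh+}. The only points deserving a moment's attention are the two degenerate possibilities — $\delta(y,F,x)=0$, and $\mu$ taking the value $0$ at $\delta(y,F,x)$ — but both are absorbed without extra work: the individual estimates $d(x,F_0\iv(y))<\mu(t_k)$ remain available for $t_k>0$, and combining monotonicity with upper semicontinuity of $\mu$ yields $\limsup_{k\to\infty}\mu(t_k)=\mu(\delta(y,F,x))$ regardless.
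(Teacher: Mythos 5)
Your proposal is correct and follows essentially the same route as the paper, which treats Corollary~\ref{Kh4.1} as an immediate consequence of Corollary~\ref{Khanh4+.1} obtained exactly as Theorem~\ref{Kh} is deduced from Theorem~\ref{Khanh+}: apply the pointwise estimate $d(x,F_0\iv(y))<\mu(t)$ for each admissible $t\in(0,\ga)$ and then pass to the infimum over such $t$. Your added observation that the upper semicontinuity of $\mu$ (combined with monotonicity, i.e.\ right-continuity) is what justifies the limit $\limsup_k\mu(t_k)\le\mu(\delta(y,F,x))$ is accurate and in fact makes explicit a step the paper attributes loosely to ``monotonicity''.
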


\begin{corollary}\label{Kh4.2}
Let $(x,y)\in X\times Y$ and $\mu:\R_+\to\R_+$ be an \usc\ nondecreasing function.
Suppose that the mapping $\tau\mapsto F_\tau\iv(y)$ on $\R_+$ is outer semicontinuous at $0$ and, for some $\ga>\delta(y,F,x)$ and any $t\in(0,\ga)$ with $(x,t,y)\in\gph F$,
there are functions $b,m:(0,\infty)\to (0,\infty)$ such that condition \eqref{mutau+} is satisfied
and, for each $\tau>0$ with $\mu(\tau)\le \mu(t)$, conditions \eqref{mu++}, \eqref{set1} and \eqref{set2} hold true.
Then,
$d(x,F_0\iv(y))\le\mu(\delta(y,F,x))$.
\end{corollary}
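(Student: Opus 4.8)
The plan is to derive Corollary~\ref{Kh4.2} from its ``at a point'' counterpart, Corollary~\ref{Khanh4+.2}, following exactly the scheme used above in the proof of Theorem~\ref{Kh}. The only genuinely new ingredient, as in that proof, is the passage from an estimate valid for each individual $t$ with $(x,t,y)\in\gph F$ to an estimate formulated in terms of the ``distance-like'' quantity $\delta(y,F,x)$ from \eqref{del}; everything else is a verbatim transcription of the hypotheses.

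First I would fix an arbitrary $t\in(0,\ga)$ with $(x,t,y)\in\gph F$. By assumption, for this $t$ there are functions $b,m:(0,\infty)\to(0,\infty)$ satisfying \eqref{mutau+} and such that \eqref{mu++}, \eqref{set1} and \eqref{set2} hold for every $\tau>0$ with $\mu(\tau)\le\mu(t)$. Together with the outer semicontinuity of $\tau\mapsto F_\tau\iv(y)$ at $0$, these are precisely the hypotheses of Corollary~\ref{Khanh4+.2} at the point $(x,t,y)$, so that corollary yields $d(x,F_0\iv(y))<\mu(t)$.

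It then remains to let $t$ approach $\delta(y,F,x)$ from above. Since $\ga>\delta(y,F,x)$, by the very definition of $\delta$ as an infimum there is a sequence $t_k\downarrow\delta(y,F,x)$ with $t_k\in(0,\ga)$ and $(x,t_k,y)\in\gph F$ for all $k$; applying the previous step to each $t_k$ gives $d(x,F_0\iv(y))<\mu(t_k)$. Because $\mu$ is nondecreasing and \usc, it is right-continuous, so $\mu(t_k)\to\inf_{t>\delta(y,F,x)}\mu(t)\le\mu(\delta(y,F,x))$. Passing to the limit in the inequalities above yields the claimed bound $d(x,F_0\iv(y))\le\mu(\delta(y,F,x))$.

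The step carrying all the real content is the invocation of Corollary~\ref{Khanh4+.2}; beyond that, the only point requiring a little care is the limiting argument, where the combination of monotonicity and upper semicontinuity of $\mu$ is exactly what is needed both to absorb the strict inequalities $d(x,F_0\iv(y))<\mu(t_k)$ into the non-strict bound $\mu(\delta(y,F,x))$ and to handle the case in which no $t$ attains the infimum defining $\delta(y,F,x)$.
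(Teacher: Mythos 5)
Your proposal is correct and follows essentially the same route as the paper: the paper states that Corollary~\ref{Kh4.2} is an immediate consequence of Corollary~\ref{Khanh4+.2}, obtained by the argument it spells out for Theorem~\ref{Kh} — apply the ``at a point $(x,t,y)$'' result to each admissible $t\in(0,\ga)$ to get $d(x,F_0\iv(y))<\mu(t)$, then pass to the infimum over such $t$ using the monotonicity (and, as you rightly make explicit, the upper semicontinuity, i.e.\ right-continuity) of $\mu$ to reach $\mu(\delta(y,F,x))$. Your limiting argument via $t_k\downarrow\delta(y,F,x)$ is exactly this step, so nothing further is needed.
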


\begin{remark}\label{R2.4}
Most of the comments in Remarks~\ref{R2.2} and \ref{ha} are applicable to Theorems~\ref{Kh} and \ref{Kh4} and Corollaries~\ref{Kh4.1} and \ref{Kh4.2}.
\end{remark}

In the next theorem, we at last get rid of the technical parameters inherited from the statements in Subsection~\ref{BE} and formulate a regularity statement in a more conventional way (though still as an ``at a point" condition).

\begin{theorem}\label{T2.6}
Let $(x,y)\in X\times Y$, $\mu:\R_+\to\R_+$ be a continuous nondecreasing function and $\mu(\tau)=0$ if and only if $\tau=0$.
Suppose that the mapping $\tau\mapsto F_\tau\iv(y)$ is outer semicontinuous on $[0,\delta(y,F,x)]$ and, for each pair $(u,\tau)\in F\iv(y)$ with $\tau\in(0,\delta(y,F,x)]$ and $d(x,u)\le \mu(\delta(y,F,x))-\mu(\delta(y,F,u))$, there exists a pair $(u',\tau')\in F\iv(y)$ such that $u'\ne u$ and condition \eqref{mu-} is satisfied.
Then,
$d(x,F_0\iv(y))\le\mu(\delta(y,F,x))$.
\end{theorem}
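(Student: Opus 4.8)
The plan is to deduce Theorem~\ref{T2.6} from Theorem~\ref{T2.3} applied at the single value $t=\bar t:=\delta(y,F,x)$; we may assume $\bar t<+\infty$, for otherwise the right-hand side $\mu(\delta(y,F,x))$ is not even defined. I would first dispose of the case $\bar t=0$. By the definition \eqref{del} of $\delta$ there is a sequence $t_k\to 0$ with $t_k>0$ and $y\in F(x,t_k)$, so that $d(x,F_{t_k}\iv(y))=0$ and hence $x\in\Limsup_{\tau\to 0}F_\tau\iv(y)\subset F_0\iv(y)$ by the outer semicontinuity of $\tau\mapsto F_\tau\iv(y)$ at $0$; therefore $d(x,F_0\iv(y))=0=\mu(0)=\mu(\delta(y,F,x))$, as $\mu$ vanishes only at $0$.

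Now suppose $\bar t>0$. The crucial point is that, unlike in Theorem~\ref{T2.3}, outer semicontinuity here is assumed on the \emph{closed} interval $[0,\delta(y,F,x)]$, in particular at its right endpoint $\bar t$. Again by \eqref{del} there is a sequence $t_k\to\bar t$ with $t_k>0$ and $y\in F(x,t_k)$, so $x\in\Limsup_{\tau\to\bar t}F_\tau\iv(y)\subset F_{\bar t}\iv(y)$, i.e. $(x,\bar t,y)\in\gph F$. This provides the starting ingredient required by Theorem~\ref{T2.3}, compensating for the fact that the infimum in \eqref{del} need not be attained a priori.

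It then remains to verify the other hypotheses of Theorem~\ref{T2.3} with $t=\bar t$. The mapping $\tau\mapsto F_\tau\iv(y)$ is outer semicontinuous on $[0,\bar t)\subset[0,\bar t]$, and the restriction of $\mu$ to $[0,\bar t]$ is continuous, nondecreasing, and equal to $0$ only at $0$. Finally, take any $(u,\tau)\in F\iv(y)$ with $\tau\in(0,\bar t]$ and $d(x,u)\le\mu(\bar t)-\mu(\tau)$; since $(u,\tau)\in F\iv(y)$ and $\tau>0$, we have $\delta(y,F,u)\le\tau$, so monotonicity of $\mu$ yields $\mu(\bar t)-\mu(\tau)\le\mu(\delta(y,F,x))-\mu(\delta(y,F,u))$ and hence $d(x,u)\le\mu(\delta(y,F,x))-\mu(\delta(y,F,u))$ with $\tau\in(0,\delta(y,F,x)]$. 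The hypothesis of Theorem~\ref{T2.6} then supplies a pair $(u',\tau')\in F\iv(y)$ with $u'\ne u$ satisfying \eqref{mu-}, which is precisely what Theorem~\ref{T2.3} demands. Applying Theorem~\ref{T2.3} gives $d(x,F_0\iv(y))\le\mu(\bar t)=\mu(\delta(y,F,x))$.

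The only genuinely non-routine step is the one in the second paragraph: realizing that enlarging the domain of outer semicontinuity from the half-open interval used in Theorem~\ref{T2.3} to the closed interval $[0,\delta(y,F,x)]$ is exactly what forces $(x,\delta(y,F,x),y)\in\gph F$, thereby making Theorem~\ref{T2.3} directly applicable; everything else is straightforward bookkeeping with the monotonicity of $\mu$ and the inequality $\delta(y,F,u)\le\tau$.
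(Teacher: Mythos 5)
Your proposal is correct and follows essentially the same route as the paper's own (very brief) proof: use the outer semicontinuity at the endpoint $\delta(y,F,x)$ to conclude $y\in F(x,\delta(y,F,x))$ and then invoke Theorem~\ref{T2.3} with $t=\delta(y,F,x)$. Your extra bookkeeping (the separate cases $\delta(y,F,x)=0$ and $=\infty$, and the monotonicity argument showing $\delta(y,F,u)\le\tau$ transfers the hypothesis of Theorem~\ref{T2.6} into that of Theorem~\ref{T2.3}) just makes explicit what the paper leaves implicit.
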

\begin{proof}
If $\delta(y,F,x)=\infty$, then the conclusion holds true trivially.
Otherwise, the outer semicontinuity of $\tau\mapsto F_\tau\iv(y)$ ensures that
$y\in F(x,\delta(y,F,x))$, and the conclusion follows from Theorem \ref{T2.3} for $t=\delta(y,F,x)$.
\end{proof}

\begin{remark}
The conclusion of Theorems~\ref{Kh}, \ref{Kh4} and \ref{T2.6} and Corollaries~\ref{Kh4.1} and \ref{Kh4.2} reminds the inequality in the definition of the metric regularity property for a set-valued mapping $F:X\rightrightarrows Y$ between metric spaces; cf. \cite{DonRoc14}.
The difference is in the \RHS, where $\delta(y,F,x)$ stands in place of $d(y,F(x))$.
The relationship between the two settings will be explored in Section~\ref{CON}.
\end{remark}

The conclusion of Theorems~\ref{Kh}, \ref{Kh4} and \ref{T2.6} and Corollaries~\ref{Kh4.1} and \ref{Kh4.2} can be reformulated
equivalently in a ``covering-like'' form.

\begin{proposition}\label{P2.1}
Consider the following conditions:
\begin{enumerate}
\item[\rm(i)]
$d(x,F_0\iv(y))\le\mu(\delta(y,F,x))$,
\item[\rm(ii)]
$y\in F(B(x,t),0)$
for any
$t>\mu(\delta(y,F,x))$,
\item[\rm(iii)]
$y\in F(B(x,\mu(\delta(y,F,x))),0).$
\end{enumerate}
Then, {\rm (iii) $\Rightarrow$ (ii) $\Leftrightarrow$ (i)}.
\end{proposition}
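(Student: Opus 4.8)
The plan is to prove the two implications (iii) $\Rightarrow$ (ii) and (ii) $\Leftrightarrow$ (i), noting that all three conditions are really just translations between the language of distances and the language of membership/covering. The only subtlety is keeping careful track of the strict versus non-strict inequalities and how $\delta(y,F,\cdot)$ interacts with membership in $F(\cdot,0)$.

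First I would handle (iii) $\Rightarrow$ (ii): this is immediate from monotonicity of the ball, since for any $t>\mu(\delta(y,F,x))$ we have $B(x,\mu(\delta(y,F,x)))\subset B(x,t)$, so $y\in F(B(x,\mu(\delta(y,F,x))),0)\subset F(B(x,t),0)$. Next I would prove (i) $\Rightarrow$ (ii): if $d(x,F_0\iv(y))\le\mu(\delta(y,F,x))$ then for any $t>\mu(\delta(y,F,x))$ we have $d(x,F_0\iv(y))<t$, so by the definition of the point-to-set distance there exists $u\in F_0\iv(y)$ with $d(x,u)<t$, i.e.\ $u\in B(x,t)$ and $y\in F(u,0)\subset F(B(x,t),0)$. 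Finally, for (ii) $\Rightarrow$ (i): assuming (ii), for every $t>\mu(\delta(y,F,x))$ there is a point $u\in B(x,t)$ with $y\in F(u,0)$, hence $d(x,F_0\iv(y))\le d(x,u)<t$; letting $t\downarrow\mu(\delta(y,F,x))$ gives $d(x,F_0\iv(y))\le\mu(\delta(y,F,x))$, which is (i).

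I do not expect any real obstacle here; the argument is essentially a chain of elementary manipulations with infima and balls. The one point worth being attentive to is the degenerate case $\delta(y,F,x)=+\infty$ (when $y\notin F(x,t)$ for all $t$), in which $\mu(\delta(y,F,x))$ should be read as $\sup_{\R_+}\mu$ (possibly $+\infty$) and all three conditions hold vacuously or trivially; I would either dispatch this case at the outset or simply note that the arithmetic of extended reals makes the above inequalities still valid. A second minor point is that (iii) is genuinely stronger than (ii) and (i) — the reverse implication fails in general because the closed relation $y\in F(\cdot,0)$ at the exact radius $\mu(\delta(y,F,x))$ need not hold — so I would resist the temptation to claim an equivalence among all three and state only the implications asserted.
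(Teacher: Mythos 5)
Your argument is correct and follows essentially the same route as the paper's own proof: (iii)$\Rightarrow$(ii) by ball monotonicity, (i)$\Rightarrow$(ii) by picking a point of $F_0^{-1}(y)$ within distance $t$, and (ii)$\Rightarrow$(i) via the equivalence of $y\in F(B(x,t),0)$ with $d(x,F_0^{-1}(y))<t$ and letting $t$ decrease to $\mu(\delta(y,F,x))$. Your extra remarks on the extended-real degenerate case and on (iii) being strictly stronger are fine but not needed.
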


\begin{proof}
{\rm (iii) $\Rightarrow$ (ii)} is obvious.

{\rm (i) $\Rightarrow$ (ii)}.
By (i), for any $t>\mu(\delta(y,F,x))$, there exists a $z\in F_0\iv(y)$ such that $d(x,z)<t$ and consequently $y\in F(z,0)\subset F(B(x,t),0)$.

{\rm (ii) $\Rightarrow$ (i)}.
$y\in F(B(x,t),0)$ and $t>0$ if and only if $d(x,F_0\iv(y))<t$.
If the last inequality holds for all $t>\mu(\delta(y,F,x))$, then $d(x,F_0\iv(y))\le\mu(\delta(y,F,x))$.
\end{proof}

\begin{remark}\label{R2.5}
Proposition~\ref{P2.1} is true without the assumption of the completeness of $X$.
\end{remark}

\section{Regularity on a set}\label{MET_SEC}

In this section, we continue exploring
regularity properties for
a set-valued mapping $F:X\times\R_+\rightrightarrows Y$, where $X$ and $Y$ are metric spaces.
Given a subset $W\subset X\times Y$ and an \usc\ nondecreasing function $\mu:[0,+\infty]\to[0,+\infty]$,
we use the statements derived in Section~\ref{bas_est} to characterize regularity of $F$ \emph{on $W$ with functional modulus $\mu$}.
We ``set free'' the remaining two variables $x$ and $y$ restricting them to the set $W$.

\begin{definition}\label{funreg}
\begin{enumerate}
\item[\rm(i)]
$F$ is regular on $W$ with functional modulus $\mu$ if
\begin{equation*}
d(x,F_0\iv(y))\le\mu(\delta(y,F,x))
\quad\mbox{for all}\quad
(x,y)\in W.
\end{equation*}
\item[\rm(ii)]
$F$ is
open on $W$ with functional modulus $\mu$ if
\begin{equation*}
y\in F(B(x,t),0)
\quad\mbox{for all}\quad
(x,y)\in W\mbox{ and } t>\mu(\delta(y,F,x)).
\end{equation*}
\end{enumerate}
\end{definition}

The above properties differ from the conventional metric regularity defined for set-valued mappings between metric spaces (cf. \cite{DonRoc14}) and its nonlinear extensions (cf. \cite{Iof13}).
The relationship between the two settings will be discussed in Section~\ref{CON}.

The next proposition is a consequence of Proposition~\ref{P2.1} thanks to Remark~\ref{R2.5}.

\begin{proposition}\label{reg_rel}
The two properties in Definition~\ref{funreg} are equivalent.
\end{proposition}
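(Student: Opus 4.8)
The plan is to deduce Proposition~\ref{reg_rel} directly from Proposition~\ref{P2.1}, exactly as the surrounding text signals via the reference to Remark~\ref{R2.5}. The point is that the two conditions in Definition~\ref{funreg} are nothing but conditions~(i) and~(ii) of Proposition~\ref{P2.1}, now quantified over all pairs $(x,y)\in W$, and Proposition~\ref{P2.1} already gives the equivalence (i)~$\Leftrightarrow$~(ii) for each fixed pair $(x,y)$. So the proof is essentially a pointwise application of the earlier result followed by universal quantification over $W$.

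First I would fix an arbitrary pair $(x,y)\in W$ and observe that the inequality $d(x,F_0\iv(y))\le\mu(\delta(y,F,x))$ appearing in Definition~\ref{funreg}(i) is precisely condition~(i) of Proposition~\ref{P2.1} for that pair, and the statement ``$y\in F(B(x,t),0)$ for all $t>\mu(\delta(y,F,x))$'' in Definition~\ref{funreg}(ii) is precisely condition~(ii) there. Next I would invoke the implication (i)~$\Leftrightarrow$~(ii) from Proposition~\ref{P2.1}. Since Proposition~\ref{P2.1} does not require completeness of $X$ (this is the content of Remark~\ref{R2.5}), and our standing assumptions do include completeness anyway, there is no hidden hypothesis to check. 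Finally, since the pointwise equivalence holds for each $(x,y)\in W$, quantifying over all such pairs yields that Definition~\ref{funreg}(i) holds for all $(x,y)\in W$ if and only if Definition~\ref{funreg}(ii) does, which is exactly the assertion of the proposition.

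There is essentially no obstacle here; the only thing to be careful about is the bookkeeping of quantifiers — one must note that the ``for all $t>\mu(\delta(y,F,x))$'' in Definition~\ref{funreg}(ii) is part of the pointwise statement (ii) of Proposition~\ref{P2.1}, not an extra layer on top of it, so that the translation between the two formulations is literal. A one- or two-sentence proof suffices, reading roughly: ``By Proposition~\ref{P2.1}, for each $(x,y)\in X\times Y$ condition~(i) there is equivalent to condition~(ii) there; applying this to every $(x,y)\in W$ shows that property~(i) of Definition~\ref{funreg} holds if and only if property~(ii) does. Thanks to Remark~\ref{R2.5}, the completeness of $X$ plays no role.''

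\begin{proof}
Fix $(x,y)\in W$. Conditions (i) and (ii) of Definition~\ref{funreg}, restricted to this pair, coincide respectively with conditions (i) and (ii) of Proposition~\ref{P2.1}. By Proposition~\ref{P2.1} (cf. Remark~\ref{R2.5}, so that completeness of $X$ is not needed), these two conditions are equivalent. Since $(x,y)\in W$ was arbitrary, property (i) in Definition~\ref{funreg} holds for all $(x,y)\in W$ if and only if property (ii) does.
\end{proof}
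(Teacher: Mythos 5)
Your proposal is correct and matches the paper's own argument: the paper derives Proposition~\ref{reg_rel} exactly as a pointwise application of the equivalence (i)~$\Leftrightarrow$~(ii) of Proposition~\ref{P2.1} over all $(x,y)\in W$, citing Remark~\ref{R2.5} so that completeness is immaterial. Nothing further is needed.
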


\begin{remark}
It follows from Proposition~\ref{P2.1} that the properties in Definition~\ref{funreg} are implied by the following stronger version of openness:
\begin{equation*}
y\in F(B(x,\mu(\delta(y,F,x))),0)
\quad\mbox{for all}\quad
(x,y)\in W.
\end{equation*}
\end{remark}

The criteria of regularity in the next theorem are direct consequences of Theorems~\ref{Kh} and \ref{Kh4} and Corollary~\ref{Kh4.2}.

\begin{theorem}\label{T3.3}
Suppose that, for any $(x,y)\in W$, the mapping $\tau\mapsto F_\tau\iv(y)$ on $\R_+$ is outer semicontinuous at $0$ and,
for some $\ga>\delta(y,F,x)$ and any $t\in(0,\ga)$
with $(x,t,y)\in \gr F$, one of the following sets of conditions is satisfied:
\begin{enumerate}
\item[\rm(i)]
there are sequences of positive numbers $(b_n)$ and $(c_n)$ and a function $m:(0,\infty)\to (0,\infty)$ such that conditions
\eqref{B3}--\eqref{B5+} and \eqref{A4-} hold true,
\item[\rm(ii)]
there are functions $b,m:(0,\infty)\to (0,\infty)$ such that condition \eqref{mutau+} is satisfied and, for any $\tau>0$ with $\mu(\tau)\le \mu(t)$, conditions \eqref{mu++} and \eqref{net+++} hold true,
\item[\rm(iii)]
there are functions $b,m:(0,\infty)\to (0,\infty)$ such that condition \eqref{mutau+} is satisfied and, for any $\tau>0$ with $\mu(\tau)\le \mu(t)$, conditions \eqref{mu++}, \eqref{set1} and \eqref{set2} hold true.
\end{enumerate}
Then, $F$ is regular on $W$ with functional modulus $\mu$.
\end{theorem}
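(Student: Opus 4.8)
The plan is to reduce the statement, pointwise in $(x,y)\in W$, to the ``at a point'' regularity estimates already established in Subsection~\ref{BE}--\ref{bas_est}, namely Theorem~\ref{Kh} for case (i), Theorem~\ref{Kh4} for case (ii), and Corollary~\ref{Kh4.2} for case (iii). Indeed, by Definition~\ref{funreg}(i), ``$F$ is regular on $W$ with functional modulus $\mu$'' means precisely that $d(x,F_0\iv(y))\le\mu(\delta(y,F,x))$ for every $(x,y)\in W$; so it suffices to verify this single inequality for one fixed but arbitrary pair $(x,y)\in W$.

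First I would fix $(x,y)\in W$. If $\delta(y,F,x)=+\infty$, the inequality holds trivially (as in the proof of Theorem~\ref{T2.6}). Otherwise $\delta(y,F,x)<\infty$, and by hypothesis $\tau\mapsto F_\tau\iv(y)$ is outer semicontinuous at $0$ and there is a $\ga>\delta(y,F,x)$ such that, for each $t\in(0,\ga)$ with $(x,t,y)\in\gph F$, at least one of the three bundles of conditions (i)--(iii) holds. Moreover, by the standing assumption of this section, $\mu$ is an \usc\ nondecreasing function on $[0,+\infty]$, hence in particular the monotonicity/upper semicontinuity hypotheses on $\mu$ demanded by Theorems~\ref{Kh} and \ref{Kh4} and Corollary~\ref{Kh4.2} are met (shrinking $\ga$ if necessary so that $\mu$ is finite on $(0,\ga)$ when $\mu(\delta(y,F,x))<\infty$, using that a monotone \usc\ function is right-continuous).

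Next I would invoke, according to which case is in force, the corresponding earlier result: in case (i), conditions \eqref{B3}--\eqref{B5+} together with \eqref{A4-} are exactly the hypotheses of Theorem~\ref{Kh}; in case (ii), condition \eqref{mutau+} together with \eqref{mu++} and \eqref{net+++} (for all $\tau>0$ with $\mu(\tau)\le\mu(t)$) are exactly the hypotheses of Theorem~\ref{Kh4}; in case (iii), condition \eqref{mutau+} together with \eqref{mu++}, \eqref{set1} and \eqref{set2} are exactly the hypotheses of Corollary~\ref{Kh4.2}. In each case the conclusion is the same, $d(x,F_0\iv(y))\le\mu(\delta(y,F,x))$. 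Since $(x,y)\in W$ was arbitrary, Definition~\ref{funreg}(i) yields that $F$ is regular on $W$ with functional modulus $\mu$, which also gives openness on $W$ with functional modulus $\mu$ by Proposition~\ref{reg_rel}.

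This is essentially a bookkeeping reduction, so there is no genuine obstacle; the only points requiring a little care are the verbatim matching of the quantifier structure ``for some $\ga>\delta(y,F,x)$ and any $t\in(0,\ga)$ with $(x,t,y)\in\gph F$'' with the one used in Theorems~\ref{Kh} and \ref{Kh4} and Corollary~\ref{Kh4.2}, and the handling of the degenerate case $\delta(y,F,x)=+\infty$ (together with the harmless restriction of the extended-real-valued $\mu$ to $\R_+$).
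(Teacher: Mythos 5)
Your proposal is correct and follows exactly the paper's route: the paper itself states Theorem~\ref{T3.3} as a direct consequence of Theorems~\ref{Kh} and \ref{Kh4} and Corollary~\ref{Kh4.2} applied pointwise on $W$, which is precisely your reduction. The extra care you take with the degenerate case $\delta(y,F,x)=+\infty$ and the quantifier matching is harmless bookkeeping that the paper leaves implicit.
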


In the next statement, which is a consequence of the ``parameter-free'' Theorem~\ref{T2.6}, $p_Y:X\times Y\rightarrow Y$ denotes the canonical projection on $Y$:
for any $(x,y)\in X\times Y$, $p_Y(x,y)=y$.
Given a pair $(x,y)\in W$, denote
$$
U_{x,y}:=\{u\in X\mid \delta(y,F,u)>0,\; \mu(\delta(y,F,u))+d(u,x)\le\mu(\delta(y,F,x))\}.
$$

\begin{theorem}\label{T3.5}
Let $\mu$ be continuous,
$\mu(\tau)=0$ if and only if $\tau=0$.
Suppose that $F^{-1}$ is closed-valued on $p_Y(W)$ and, for any $(x,y)\in W$ and
$u\in U_{x,y}$,
there exists a point $u'\neq u$ such that
\begin{equation}\label{4''}
 \mu(\delta(y,F,u'))\le \mu(\delta(y,F,u))-d(u,u').
\end{equation}
Then, $F$ is regular on $W$ with functional modulus $\mu$.
\end{theorem}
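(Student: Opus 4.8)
The plan is to deduce Theorem~\ref{T3.5} from the ``parameter-free'' pointwise criterion Theorem~\ref{T2.6} by checking its hypotheses at each fixed pair $(x,y)\in W$. So fix $(x,y)\in W$. The conclusion to be reached is $d(x,F_0\iv(y))\le\mu(\delta(y,F,x))$, which is exactly the inequality defining regularity on $W$ (Definition~\ref{funreg}(i)); since the pair is arbitrary, this yields the theorem. Thus the whole argument reduces to verifying, for this $(x,y)$, the two standing assumptions of Theorem~\ref{T2.6}: first, that $\tau\mapsto F_\tau\iv(y)$ is outer semicontinuous on $[0,\delta(y,F,x)]$; second, that for every pair $(u,\tau)\in F\iv(y)$ with $\tau\in(0,\delta(y,F,x)]$ and $d(x,u)\le\mu(\delta(y,F,x))-\mu(\delta(y,F,u))$ there is a pair $(u',\tau')\in F\iv(y)$ with $u'\ne u$ satisfying \eqref{mu-}.

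First I would dispose of the outer semicontinuity requirement. By definition of $\delta$, the closedness of $F\iv$ on $p_Y(W)$ (hence at $y$) forces $F_\tau\iv(y)$ to behave outer semicontinuously: if $x_k\to x_*$ with $x_k\in F_{\tau_k}\iv(y)$ and $\tau_k\to\tau_*$, then $(x_k,\tau_k)\in F\iv(y)$ (viewing $F\iv$ as a subset of $Y\times(X\times\R_+)$ in the obvious way, or directly $(\,y,(x_k,\tau_k)\,)$ in the graph), and closedness gives $(x_*,\tau_*)\in F\iv(y)$, i.e. $x_*\in F_{\tau_*}\iv(y)$. This is precisely outer semicontinuity of $\tau\mapsto F_\tau\iv(y)$ at every $\tau_*$, in particular on $[0,\delta(y,F,x)]$. (One should be a little careful about whether ``$F^{-1}$ closed-valued'' in the statement means the set $\gph F$, equivalently $\gph F\iv$, is closed, or only that each fibre $F\iv(y)\subset X\times\R_+$ is closed; the latter is what is literally written and is exactly what is needed here.)

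Next comes the core step: translating the hypothesis of Theorem~\ref{T3.5}, which is phrased for $u\in U_{x,y}$, into the hypothesis of Theorem~\ref{T2.6}, which is phrased for pairs $(u,\tau)\in F\iv(y)$. Suppose $(u,\tau)\in F\iv(y)$ with $\tau\in(0,\delta(y,F,x)]$ and $d(x,u)\le\mu(\delta(y,F,x))-\mu(\delta(y,F,u))$. Then $\delta(y,F,u)\le\tau<\infty$ and in fact $\delta(y,F,u)>0$: indeed the outer semicontinuity just established (equivalently, closedness of $F\iv(y)$) shows $y\in F(u,\delta(y,F,u))$, so if $\delta(y,F,u)=0$ we would have $u\in F_0\iv(y)$ and hence $d(x,F_0\iv(y))\le d(x,u)\le\mu(\delta(y,F,x))$ directly, finishing that case. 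So assume $\delta(y,F,u)>0$; then the displayed condition is exactly $\mu(\delta(y,F,u))+d(u,x)\le\mu(\delta(y,F,x))$, i.e. $u\in U_{x,y}$. By the hypothesis of Theorem~\ref{T3.5} there is $u'\ne u$ with $\mu(\delta(y,F,u'))\le\mu(\delta(y,F,u))-d(u,u')$. Now set $\tau':=\delta(y,F,u')$ (again finite, since the right-hand side is $\le\mu(\delta(y,F,u))<\infty$ and $\mu$ is increasing with $\mu^{-1}(0)=\{0\}$, so $\mu$ is genuinely controlling; and $(u',\tau')\in F\iv(y)$ by closedness). Then $\mu(\tau')=\mu(\delta(y,F,u'))\le\mu(\delta(y,F,u))-d(u',u)\le\mu(\tau)-d(u',u)$, using $\delta(y,F,u)\le\tau$ and monotonicity of $\mu$. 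This is exactly \eqref{mu-}. Hence all hypotheses of Theorem~\ref{T2.6} hold, and it delivers $d(x,F_0\iv(y))\le\mu(\delta(y,F,x))$.

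The main obstacle, and the place where care is needed, is the bookkeeping around $\delta(y,F,u)$ versus the auxiliary $\tau$: one must pass from ``$\tau$ such that $(u,\tau)\in F\iv(y)$'' to ``$\delta(y,F,u)$'', show the latter is strictly positive (handling the degenerate $\delta(y,F,u)=0$ case separately, as above) and $\le\tau$, and then use monotonicity of $\mu$ in the right direction to get \eqref{mu-} with $\tau$ rather than $\delta(y,F,u)$ on the right. A secondary point is confirming that the set over which $u$ ranges in Theorem~\ref{T2.6} — pairs with $d(x,u)\le\mu(\delta(y,F,x))-\mu(\delta(y,F,u))$ — coincides (once $\delta(y,F,u)>0$) with $U_{x,y}$; this is immediate from the definitions but should be stated explicitly. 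Everything else is routine.
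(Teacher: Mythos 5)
Your proposal is correct and follows essentially the same route as the paper's proof: fix $(x,y)\in W$, settle the degenerate case $\delta(y,F,u)=0$ directly via the closedness of $F^{-1}(y)$, get outer semicontinuity of $\tau\mapsto F_\tau^{-1}(y)$ from that same closedness, and translate the $U_{x,y}$ hypothesis into that of Theorem~\ref{T2.6} by taking $\tau'=\delta(y,F,u')$ and using $\delta(y,F,u)\le\tau$ with the monotonicity of $\mu$ to obtain \eqref{mu-}. There is no substantive difference from the paper's argument.
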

\begin{proof}
Fix an arbitrary $(x,y)\in W$.
%
%
We need to show that $d(x,F_0\iv(y))\le\mu(\delta(y,F,x))$.
If there exists a point $u$ such that $\delta(y,F,u)=0$ and $d(x,u)\le \mu(\delta(y,F,x))$ (in particular, if $\delta(y,F,x)=0$), then, by the closedness of $F^{-1}(y)$, we have $u\in F_0^{-1}(y)$, and the inequality holds trivially.

Suppose that $\delta(y,F,u)>0$ for any $u\in X$ such that $d(x,u)\le \mu(\delta(y,F,x))$.
Take any $u\in X$ such that $d(x,u)\le \mu(\delta(y,F,x))-\mu(\delta(y,F,u))$ and any $\tau\in(0,\delta(y,F,x)]$ such that $(u,\tau)\in F\iv(y)$.
Then,
$\tau\ge \delta(y,F,u)>0$ and,
by the assumption, there exists a point $u'\neq u$ satisfying \eqref{4''}.
Setting $\tau'=\delta(y,F,u')$, we get $(u',\tau')\in F^{-1}(y)$ and condition  \eqref{mu-} is satisfied:
$$
\mu(\tau')=\mu(\delta(y,F,u'))\le \mu(\delta(y,F,u))-d(u,u')\le \mu(\tau)-d(u,u').
$$
The mapping $\tau\mapsto F_\tau\iv(y)$ is outer semicontinuous on $[0,\delta(y,F,x)]$ thanks to the closedness of $F^{-1}(y)$.
The required inequality follows from Theorem \ref{T2.6}.
\end{proof}

One can define seemingly more general $\nu$-versions of the properties in Definition~\ref{funreg}, determined by a function $\nu:W\to (0,\infty]$; see \cite{Iof13} for the motivations behind such properties.

\begin{definition}\label{funreg+}
\begin{enumerate}
\item[\rm(i)]
$F$ is $\nu$-regular on $W$ with functional modulus $\mu$ if
\begin{equation*}
\hspace{-.5cm}
d(x,F_0\iv(y))\le\mu(\delta(y,F,x))
\;\;\mbox{for all}\;
(x,y)\in W
\mbox{ with } \mu(\delta(y,F,x))<\nu(x,y).
\end{equation*}
\item[\rm(ii)]
$F$ is $\nu$-open on $W$ with functional modulus $\mu$ if
\begin{equation*}
y\in F(B(x,t),0)
\quad\mbox{for all}\quad
(x,y)\in W\mbox{ and } t\in(\mu(\delta(y,F,x)),\nu(x,y)).
\end{equation*}
\end{enumerate}
\end{definition}

\begin{remark}\label{gam}
Each of the properties in Definition~\ref{funreg}
is a particular case of the corresponding one in Definition~\ref{funreg+}
with any function $\nu:W\to (0,\infty]$ satisfying $\mu(\delta(y,F,x))<\nu(x,y)$
for all $(x,y)\in W$ with $\mu(\delta(y,F,x))<+\infty$, e.g., one can take $\nu\equiv+\infty$.
At the same time, each of the properties in Definition~\ref{funreg+}
can be considered as a particular case of the corresponding one in Definition~\ref{funreg}
with the set $W$ replaced by $W':=\{(x,y)\in W\mid \mu(\delta(y,F,x))<\nu(x,y)\}$.
\end{remark}


\begin{proposition}\label{reg_rel.1}
The two properties in Definition~\ref{funreg+} are equivalent.
\end{proposition}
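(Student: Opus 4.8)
The plan is to deduce the equivalence directly from Proposition~\ref{reg_rel}, by recognizing each of the two $\nu$-properties on $W$ as the corresponding property of Definition~\ref{funreg} on the smaller set
$W':=\{(x,y)\in W\mid\mu(\delta(y,F,x))<\nu(x,y)\}$, exactly as anticipated in Remark~\ref{gam}. So the proof will consist of three easy identifications chained together; there is essentially no analysis to do, only careful matching of the two lists of quantifiers.

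First I would record the (tautological) identity between $\nu$-regularity of $F$ on $W$ and regularity of $F$ on $W'$. In Definition~\ref{funreg+}(i) the inequality $d(x,F_0\iv(y))\le\mu(\delta(y,F,x))$ is imposed precisely for those $(x,y)\in W$ that belong to $W'$, which is verbatim the requirement of Definition~\ref{funreg}(i) with $W$ replaced by $W'$. No monotonicity or completeness is needed here.

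Next I would treat the openness side, which is the one requiring a moment's thought. The inclusion $y\in F(B(x,t),0)$ in Definition~\ref{funreg+}(ii) carries content only when the interval $(\mu(\delta(y,F,x)),\nu(x,y))$ is nonempty, i.e.\ only for $(x,y)\in W'$; hence $\nu$-openness of $F$ on $W$ says: for every $(x,y)\in W'$ and every $t\in(\mu(\delta(y,F,x)),\nu(x,y))$ one has $y\in F(B(x,t),0)$. In appearance this is weaker than openness of $F$ on $W'$ (Definition~\ref{funreg}(ii)), which demands the same inclusion for \emph{all} $t>\mu(\delta(y,F,x))$. I would show the two coincide: openness on $W'$ trivially implies $\nu$-openness on $W$, and for the converse, given $(x,y)\in W'$ and $t>\mu(\delta(y,F,x))$, I pick $t'$ with $\mu(\delta(y,F,x))<t'<\min\{t,\nu(x,y)\}$ — possible because $\mu(\delta(y,F,x))$ is finite (by the strict inequality defining $W'$) and strictly below both $t$ and $\nu(x,y)$ — whence $\nu$-openness gives $y\in F(B(x,t'),0)$ and then $y\in F(B(x,t),0)$ since $B(x,t')\subset B(x,t)$.

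Finally I would invoke Proposition~\ref{reg_rel} with $W'$ in place of $W$: regularity of $F$ on $W'$ with functional modulus $\mu$ is equivalent to openness of $F$ on $W'$ with functional modulus $\mu$. Chaining this with the two identifications above yields the asserted equivalence of the properties in Definition~\ref{funreg+}. The single point of care is the passage from the bounded $t$-range in Definition~\ref{funreg+}(ii) to the unbounded range in Definition~\ref{funreg}(ii), and this is disposed of by the monotonicity $t'\le t\Rightarrow B(x,t')\subset B(x,t)$ together with the strict inequality $\mu(\delta(y,F,x))<\nu(x,y)$ cutting out $W'$; everything else is bookkeeping. (Alternatively, one could argue entirely at a point via Proposition~\ref{P2.1}, taking an infimum over $t$ in the interval $(\mu(\delta(y,F,x)),\nu(x,y))$ for the ``open $\Rightarrow$ regular'' direction, but routing through $W'$ keeps the argument parallel to the proof of Proposition~\ref{reg_rel}.)
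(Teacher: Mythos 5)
Your argument is correct and is exactly the route the paper intends: the paper leaves Proposition~\ref{reg_rel.1} without an explicit proof because, as indicated in Remark~\ref{gam}, the two $\nu$-properties reduce to the corresponding properties of Definition~\ref{funreg} on the set $W'=\{(x,y)\in W\mid\mu(\delta(y,F,x))<\nu(x,y)\}$, after which Proposition~\ref{reg_rel} applies. Your extra care in reconciling the bounded $t$-range $(\mu(\delta(y,F,x)),\nu(x,y))$ with the unbounded range via $B(x,t')\subset B(x,t)$ is the only nontrivial point, and you handle it correctly.
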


We next
formulate the corresponding criteria for $\nu$-regularity.
The next two theorems are consequences of Theorem~\ref{T3.3} and the ``parameter-free'' Theorem~\ref{T3.5}, respectively, thanks to Remark~\ref{gam} and the simple observation that, if $\mu(\delta(y,F,x))<\nu(x,y)$, then, making use of the upper semicontinuity of $\mu$, it is possible to choose a $\gamma>\delta(y,F,x)$ such that $\mu(\gamma)<\nu(x,y)$.

\begin{theorem}\label{T3.3'}
Suppose that, for any $(x,y)\in W$, the mapping $\tau\mapsto F_\tau\iv(y)$ on $\R_+$ is outer semicontinuous at $0$ and,
for any $t>0$ with $(x,t,y)\in \gr F$ and $\mu(t)<\nu(x,y)$, one of the three sets of conditions in Theorem~\ref{T3.3} is satisfied.
Then, $F$ is $\nu$-regular on $W$ with functional modulus $\mu$.
\end{theorem}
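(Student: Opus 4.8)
The plan is to reduce the statement to the already established Theorem~\ref{T3.3} by means of the set‑restriction observation in Remark~\ref{gam}. Recall from that remark that $F$ is $\nu$-regular on $W$ with functional modulus $\mu$ if and only if $F$ is regular, in the sense of Definition~\ref{funreg}, on the smaller set
\[
W':=\{(x,y)\in W\mid \mu(\delta(y,F,x))<\nu(x,y)\}
\]
with the same functional modulus $\mu$. Hence it suffices to verify that the hypotheses of Theorem~\ref{T3.3} are met with $W$ replaced by $W'$ and then invoke that theorem.

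To carry this out, I would fix an arbitrary $(x,y)\in W'$ and check the two ingredients required by Theorem~\ref{T3.3}. First, since $W'\subset W$, the mapping $\tau\mapsto F_\tau\iv(y)$ on $\R_+$ is outer semicontinuous at $0$ by the assumption of the present theorem. Second, I would produce the threshold $\gamma>\delta(y,F,x)$ demanded in Theorem~\ref{T3.3}: because $(x,y)\in W'$ we have $\mu(\delta(y,F,x))<\nu(x,y)$, and applying the upper semicontinuity of $\mu$ at $\delta(y,F,x)$ with $\varepsilon:=\nu(x,y)-\mu(\delta(y,F,x))>0$ yields a $\gamma>\delta(y,F,x)$ with $\mu(\gamma)<\nu(x,y)$. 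Since $\mu$ is nondecreasing, every $t\in(0,\gamma)$ then satisfies $\mu(t)\le\mu(\gamma)<\nu(x,y)$; in particular, for every such $t$ with $(x,t,y)\in\gr F$ the hypothesis of the current theorem guarantees that one of the three sets of conditions (i)--(iii) from Theorem~\ref{T3.3} holds. Thus all assumptions of Theorem~\ref{T3.3} are fulfilled at $(x,y)$ with the set $W'$; as $(x,y)\in W'$ was arbitrary, Theorem~\ref{T3.3} gives that $F$ is regular on $W'$ with functional modulus $\mu$, which by the equivalence in Remark~\ref{gam} is precisely the assertion that $F$ is $\nu$-regular on $W$ with functional modulus $\mu$.

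The argument is essentially a bookkeeping reduction, so I do not expect a genuine obstacle. The only point requiring care is that the radius $\gamma$ appearing in Theorem~\ref{T3.3} is permitted to depend on $(x,y)$ and must be chosen small enough that $\mu$ stays below $\nu(x,y)$ on all of $(0,\gamma)$ while still strictly exceeding $\delta(y,F,x)$ — this is exactly what the \emph{upper} semicontinuity of $\mu$ (rather than mere monotonicity) supplies. A secondary, minor matter is the degenerate situation in which no $t>0$ with $(x,t,y)\in\gr F$ exists, so that $\delta(y,F,x)=+\infty$; such pairs either fall outside $W'$ or render the required inequality trivial under the standing conventions, and can be dismissed at the very start.
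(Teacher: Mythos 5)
Your proposal is correct and follows essentially the same route as the paper: the paper derives Theorem~\ref{T3.3'} from Theorem~\ref{T3.3} precisely via Remark~\ref{gam} (passing to $W'=\{(x,y)\in W\mid \mu(\delta(y,F,x))<\nu(x,y)\}$) together with the observation that upper semicontinuity of $\mu$ yields a $\gamma>\delta(y,F,x)$ with $\mu(\gamma)<\nu(x,y)$, after which monotonicity of $\mu$ gives $\mu(t)<\nu(x,y)$ for all $t\in(0,\gamma)$. Your write-up simply spells out these same steps in detail.
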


\begin{theorem}\label{T3.5"}
Let $\mu$ be continuous, $\mu(\tau)=0$ if and only if $\tau=0$ and $\nu:
\bigcup_{(x,y)\in W}(U_{x,y} \times \{y\})
\to (0,\infty)$ be Lipschitz
continuous with
modulus not greater than 1 in
$x$ for any $y\in p_Y(W)$.
Suppose that $F^{-1}$ takes closed values on $p_Y(W)$ and, for any
$(x,y)\in W$ and
$u\in U_{x,y}$
with
$
\mu(\delta(y,F,u))<\nu(u,y)$, there exists a point $u'\neq u$ such that condition \eqref{4''} holds true.
Then, $F$ is $\nu$-re\-gular on $W$ with functional modulus $\mu$.
\end{theorem}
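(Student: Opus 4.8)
The plan is to deduce Theorem~\ref{T3.5"} from the ``parameter-free'' Theorem~\ref{T3.5} via Remark~\ref{gam}. By the second assertion of Remark~\ref{gam}, $\nu$-regularity of $F$ on $W$ with functional modulus $\mu$ is nothing but ordinary regularity (in the sense of Definition~\ref{funreg}) of $F$ on $W':=\{(x,y)\in W\mid\mu(\delta(y,F,x))<\nu(x,y)\}$, so it suffices to verify the hypotheses of Theorem~\ref{T3.5} with $W$ replaced by $W'$. The assumptions on $\mu$ are assumed outright, and ``$F^{-1}$ closed-valued on $p_Y(W')$'' is immediate from $p_Y(W')\subset p_Y(W)$; the one substantive point is to produce, for every $(x,y)\in W'$ and every $u\in U_{x,y}$, a point $u'\ne u$ satisfying \eqref{4''}. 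This is exactly where the $1$-Lipschitz hypothesis on $\nu$ is needed, since Theorem~\ref{T3.5"} hands us such a $u'$ only when $u$ additionally obeys $\mu(\delta(y,F,u))<\nu(u,y)$, whereas Theorem~\ref{T3.5} requires it for \emph{all} $u\in U_{x,y}$.

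First I would dispose of the degenerate case and sort out the domains involved. Fix $(x,y)\in W'$. If $\delta(y,F,x)=0$ then $U_{x,y}$ is empty (since $\mu$ vanishes only at $0$), so the condition above is vacuous, while closedness of $F^{-1}(y)$ (which also yields outer semicontinuity of $\tau\mapsto F_\tau\iv(y)$) forces $x\in F_0\iv(y)$, whence $d(x,F_0\iv(y))=0=\mu(\delta(y,F,x))$ and the regularity inequality holds trivially at $(x,y)$. We may therefore assume $\delta(y,F,x)>0$; then $x\in U_{x,y}$, so $(x,y)$ lies in the domain $\bigcup_{(x,y)\in W}(U_{x,y}\times\{y\})$ of $\nu$, and likewise $(u,y)$ lies in that domain for any $u\in U_{x,y}$ (because $(x,y)\in W$). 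Thus $\nu(x,y)$ and $\nu(u,y)$ are defined and $|\nu(x,y)-\nu(u,y)|\le d(x,u)$.

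The crux is then a short estimate. Given $u\in U_{x,y}$, the defining inequality of $U_{x,y}$ reads $\mu(\delta(y,F,u))+d(u,x)\le\mu(\delta(y,F,x))$, and $(x,y)\in W'$ gives $\mu(\delta(y,F,x))<\nu(x,y)$; combining these with the Lipschitz bound yields $\mu(\delta(y,F,u))<\nu(x,y)-d(u,x)\le\nu(u,y)$. Hence the hypothesis of Theorem~\ref{T3.5"} applies at $u$ and produces the desired $u'\ne u$ with \eqref{4''}. With all hypotheses of Theorem~\ref{T3.5} now confirmed for $W'$, that theorem gives regularity of $F$ on $W'$ with functional modulus $\mu$, which by Remark~\ref{gam} is precisely $\nu$-regularity of $F$ on $W$ with functional modulus $\mu$. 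The only genuine obstacle is this propagation of the constraint $\mu(\delta(y,F,\cdot))<\nu(\cdot,y)$ from $x$ to the points of $U_{x,y}$ by means of the $1$-Lipschitz property of $\nu$; the rest is routine reduction and domain bookkeeping.
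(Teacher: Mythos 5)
Your proposal is correct and follows essentially the same route as the paper's proof: reduce to Theorem~\ref{T3.5} on $W':=\{(x,y)\in W\mid \mu(\delta(y,F,x))<\nu(x,y)\}$ via Remark~\ref{gam}, and use the $1$-Lipschitz property of $\nu$ together with the defining inequality of $U_{x,y}$ to get $\mu(\delta(y,F,u))\le\mu(\delta(y,F,x))-d(x,u)<\nu(x,y)-d(x,u)\le\nu(u,y)$, so the hypothesis yields the required $u'$ satisfying \eqref{4''}. Your additional handling of the degenerate case $\delta(y,F,x)=0$ and of the domain of $\nu$ is harmless extra bookkeeping that the paper leaves implicit.
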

\begin{proof}
Define $W':=\{(x,y)\in W\mid \mu(\delta(y,F,x))<\nu(x,y)\}$
and take any $(x,y)\in W'$ and
$u\in U_{x,y}$.
Then,
taking into account the Lipschitz continuity of $\nu$, we have:
$$
\mu(\delta(y,F,u))\le \mu(\delta(y,F,x))-d(x,u)<\nu(x,y)-d(x,u)\le \nu(u,y).
$$
Hence, there exists a point $u'\neq u$ such that \eqref{4''} holds true.
By Theorem~\ref{T3.5}, $F$ is regular on $W'$ and,
thanks to Remark \ref{gam},
$\nu$-re\-gular on $W$ with functional modulus $\mu$.
\end{proof}

\begin{remark}
The properties in Definitions~\ref{funreg} and \ref{funreg+} depend on the choice of the set $W$ and (in the case of Definitions~\ref{funreg+}) function $\nu$.
Changing these parameters may lead to the change of the regularity modulus or even kill regularity at all; cf. \cite[Example~1]{Iof13}.
\end{remark}

The next definition introduces the more conventional local versions of the properties in Definition~\ref{funreg} related to a fixed point $(\bx,\by)\in\gph F_0$.

\begin{definition}\label{3.3}
\begin{enumerate}
\item[\rm(i)]
$F$ is regular at $(\bx,\by)$ with functional modulus $\mu$ if there exist neighbourhoods $U$ of $\bx$ and $V$ of $\by$ such that
\begin{equation*}
d(x,F_0\iv(y))\le\mu(\delta(y,F,x))
\quad\mbox{for all}\quad
x\in U,\;y\in V.
\end{equation*}
\item[\rm(ii)]
$F$ is open at $(\bx,\by)$ with functional modulus $\mu$ if there exist neighbourhoods $U$ of $\bx$ and $V$ of $\by$ such that
\begin{equation*}
y\in F(B(x,t),0)
\quad\mbox{for all}\quad
x\in U,\;y\in V\mbox{ and } t>\mu(\delta(y,F,x)).
\end{equation*}
\end{enumerate}
\end{definition}

The properties in Definition~\ref{3.3} are obviously equivalent to the corresponding ones in Definition~\ref{funreg} with $W:=U\times V$.
The next three statements are consequences of Proposition~\ref{reg_rel} and Theorems~\ref{T3.3} and \ref{T3.5}, respectively.

\begin{proposition}\label{T3.9}
The two properties in Definition~\ref{3.3} are equivalent.
\end{proposition}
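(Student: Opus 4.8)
The plan is to reduce the claim to Proposition~\ref{reg_rel}, which already establishes the equivalence of the ``regular'' and ``open'' properties of Definition~\ref{funreg} on an arbitrary set $W\subset X\times Y$. The key observation is that the local properties in Definition~\ref{3.3} are, by construction, nothing but the corresponding properties of Definition~\ref{funreg} with the particular choice $W:=U\times V$, as is explicitly remarked in the paragraph preceding the proposition. So the whole argument is a matter of quantifying correctly over the neighbourhoods.

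First I would assume that $F$ is regular at $(\bx,\by)$ with functional modulus $\mu$. By Definition~\ref{3.3}(i) there exist neighbourhoods $U$ of $\bx$ and $V$ of $\by$ such that $d(x,F_0\iv(y))\le\mu(\delta(y,F,x))$ for all $x\in U$, $y\in V$; that is, $F$ is regular on $W:=U\times V$ with functional modulus $\mu$ in the sense of Definition~\ref{funreg}(i). Applying Proposition~\ref{reg_rel} to this $W$, we conclude that $F$ is open on $W$ with functional modulus $\mu$, i.e.\ $y\in F(B(x,t),0)$ for all $(x,y)\in U\times V$ and $t>\mu(\delta(y,F,x))$. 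This is exactly Definition~\ref{3.3}(ii) for the \emph{same} pair of neighbourhoods $U$, $V$. The converse implication is identical with the roles of (i) and (ii) interchanged: starting from openness at $(\bx,\by)$ one obtains openness on $W:=U\times V$ in the sense of Definition~\ref{funreg}(ii), and Proposition~\ref{reg_rel} (which asserts a genuine equivalence, not a one-way implication) delivers regularity on that same $W$, hence regularity at $(\bx,\by)$.

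There is essentially no obstacle here: the only thing to be careful about is that the neighbourhoods $U$ and $V$ produced by the hypothesis are re-used verbatim, so that no shrinking or enlarging is needed and the equivalence is faithful to the ``there exist $U$, $V$'' quantifier pattern in Definition~\ref{3.3}. It is also worth noting, as with Proposition~\ref{reg_rel} and Remark~\ref{R2.5}, that completeness of $X$ plays no role in this equivalence, since it rests ultimately on Proposition~\ref{P2.1} rather than on Lemma~\ref{L1}. The proof therefore amounts to a single sentence invoking Proposition~\ref{reg_rel} with $W=U\times V$.
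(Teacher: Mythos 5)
Your proposal is correct and follows exactly the paper's own route: the local properties in Definition~\ref{3.3} are the properties of Definition~\ref{funreg} with $W:=U\times V$, and the equivalence is then an immediate consequence of Proposition~\ref{reg_rel} (ultimately Proposition~\ref{P2.1}), with the same neighbourhoods reused in both directions.
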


\begin{theorem}\label{T3.10}
Suppose that there exist neighbourhoods $U$ of $\bx$ and $V$ of $\by$ such that, for any $x\in U$ and $y\in V$, the mapping $\tau\mapsto F_\tau\iv(y)$ on $\R_+$ is outer semicontinuous at $0$ and,
for some $\ga>\delta(y,F,x)$ and any $t\in(0,\ga)$
with $(x,t,y)\in \gr F$, one of the three sets of conditions in Theorem~\ref{T3.3} is satisfied.
Then, $F$ is regular at $(\bx,\by)$ with functional modulus $\mu$.
\end{theorem}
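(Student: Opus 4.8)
The plan is to reduce Theorem~\ref{T3.10} to the already-established ``on a set'' version, Theorem~\ref{T3.3}, by the obvious choice $W:=U\times V$. Indeed, by the remark immediately preceding Proposition~\ref{T3.9}, regularity of $F$ at $(\bx,\by)$ with functional modulus $\mu$ (in the sense of Definition~\ref{3.3}) is literally the same as regularity of $F$ on $W=U\times V$ with functional modulus $\mu$ (in the sense of Definition~\ref{funreg}), once $U$ and $V$ are fixed neighbourhoods of $\bx$ and $\by$. So the whole content of the theorem is a translation of hypotheses, not a new argument.

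First I would take the neighbourhoods $U$ of $\bx$ and $V$ of $\by$ whose existence is postulated in the statement, and set $W:=U\times V$. Then for every $(x,y)\in W$ we are given exactly the hypotheses of Theorem~\ref{T3.3}: the mapping $\tau\mapsto F_\tau\iv(y)$ on $\R_+$ is outer semicontinuous at $0$, and for some $\ga>\delta(y,F,x)$ and all $t\in(0,\ga)$ with $(x,t,y)\in\gph F$ one of the three sets of conditions (i)--(iii) of Theorem~\ref{T3.3} holds. Applying Theorem~\ref{T3.3} with this $W$, we conclude that $F$ is regular on $W=U\times V$ with functional modulus $\mu$, i.e.
\begin{equation*}
d(x,F_0\iv(y))\le\mu(\delta(y,F,x))\quad\mbox{for all}\quad x\in U,\;y\in V.
\end{equation*}
By Definition~\ref{3.3}(i), this is precisely the assertion that $F$ is regular at $(\bx,\by)$ with functional modulus $\mu$, which finishes the proof.

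There is essentially no obstacle here: the proof is a one-line invocation of Theorem~\ref{T3.3} after identifying $W=U\times V$. The only thing worth double-checking is that the standing assumption $(\bx,\by)\in\gph F_0$ from Definition~\ref{3.3} plays no role in the derivation (it does not — it is only needed so that the phrase ``regular \emph{at} $(\bx,\by)$'' is meaningful), and that the assumptions on $\mu$ (an \usc\ nondecreasing function $\R_+\to\R_+$, as required throughout Section~\ref{MET_SEC}) are the same in both statements, which they are. Hence the plan is simply: fix $U,V$, set $W:=U\times V$, apply Theorem~\ref{T3.3}, and read off Definition~\ref{3.3}(i).
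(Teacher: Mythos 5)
Your proposal is correct and coincides with the paper's own treatment: the paper explicitly notes that the properties in Definition~\ref{3.3} are the corresponding ones in Definition~\ref{funreg} with $W:=U\times V$, and then obtains Theorem~\ref{T3.10} as a direct consequence of Theorem~\ref{T3.3}. Nothing further is needed.
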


\begin{theorem}\label{T3.12}
Let $\mu$ be continuous,
$\mu(\tau)=0$ if and only if $\tau=0$.
Suppose that there exist neighbourhoods $U$ of $\bx$ and $V$ of $\by$ such that $F^{-1}$ takes closed values on $V$ and, for any $x\in U$, $y\in V$, and
$u\in U_{x,y}$,
there exists a point $u'\neq u$ such that condition
\eqref{4''} is satisfied.
Then, $F$ is regular at $(\bx,\by)$ with functional modulus $\mu$.
\end{theorem}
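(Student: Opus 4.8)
The plan is to deduce Theorem~\ref{T3.12} from Theorem~\ref{T3.5} by the choice $W:=U\times V$. First I would observe that, since $U$ is a nonempty neighbourhood of $\bx$, the canonical projection satisfies $p_Y(W)=p_Y(U\times V)=V$; hence the hypothesis that $F^{-1}$ takes closed values on $V$ is precisely the requirement in Theorem~\ref{T3.5} that $F^{-1}$ be closed-valued on $p_Y(W)$. Next, because $(x,y)\in W$ holds if and only if $x\in U$ and $y\in V$, the remaining hypothesis of Theorem~\ref{T3.12}---that for every $x\in U$, $y\in V$ and $u\in U_{x,y}$ there is a point $u'\neq u$ satisfying \eqref{4''}---matches verbatim the displacement hypothesis of Theorem~\ref{T3.5} for this $W$. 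Here one should note that $u$ in the definition of $U_{x,y}$ ranges over all of $X$, so no compatibility issue with the neighbourhood $U$ arises; likewise, the outer semicontinuity of $\tau\mapsto F_\tau\iv(y)$ needed later is automatic from the closedness of $F^{-1}(y)$, exactly as in the proof of Theorem~\ref{T3.5}.

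Then I would apply Theorem~\ref{T3.5} directly: its conclusion is that $F$ is regular on $W$ with functional modulus $\mu$, that is, $d(x,F_0\iv(y))\le\mu(\delta(y,F,x))$ for all $(x,y)\in U\times V$. Finally, comparing with Definition~\ref{3.3}(i), this inequality holding for all $x\in U$ and $y\in V$ is precisely the assertion that $F$ is regular at $(\bx,\by)$ with functional modulus $\mu$, with the same neighbourhoods $U$ and $V$; this is the equivalence recorded in the sentence immediately following Definition~\ref{3.3}. That closes the argument.

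There is essentially no genuine obstacle: the statement is the local version of Theorem~\ref{T3.5}, and the only points requiring any care are the identification $p_Y(U\times V)=V$ and the routine matching of the universal quantifiers over $(x,y)$ and over $u\in U_{x,y}$. Completeness of $X$ is not an issue, being a standing assumption and already used inside Theorem~\ref{T3.5} (through Theorems~\ref{T2.6} and \ref{T2.3} and Lemma~\ref{L1}). If one preferred a self-contained route, one could instead apply Theorem~\ref{T2.6} at each pair $(x,y)\in U\times V$ with $t=\delta(y,F,x)$, reproducing the short reduction carried out in the proof of Theorem~\ref{T3.5}, but invoking Theorem~\ref{T3.5} as a black box is cleaner and shorter.
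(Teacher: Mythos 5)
Your proposal is correct and is essentially the paper's own argument: the paper derives Theorem~\ref{T3.12} as an immediate consequence of Theorem~\ref{T3.5} with $W:=U\times V$, using the observation (stated right after Definition~\ref{3.3}) that the local properties coincide with those of Definition~\ref{funreg} for this choice of $W$. Your checks that $p_Y(U\times V)=V$ and that the displacement hypothesis over $u\in U_{x,y}$ matches verbatim are exactly the routine verifications the paper leaves implicit.
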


\section{Conventional setting}\label{CON}

In this section, we consider the standard in variational analysis setting of a set-valued mapping $F:X\rightrightarrows Y$ between metric spaces.
Such a mapping can be imbedded into the more general setting explored in the previous sections by defining a set-valued mapping $\mathcal{F}:X\times \R_+\rightrightarrows Y$ as follows (cf. \cite[p.~508]{Iof00_}: for any $x\in X$ and $t\ge0$,
\begin{equation}\label{F}
\mathcal{F}(x,t):=B(F(x),t)=
\begin{cases}
\{y\in Y\mid d(y,F(x))<t\}
&\mbox{ if } t>0,\\
F(x) &\mbox{ if } t=0.
\end{cases}
\end{equation}
(Recall the convention: $B(y,0)=\{y\}$.)
We are going to consider also mappings $\overline{F}:X\rightrightarrows Y$ and $\overline{\mathcal{F}}:X\times \R_+\rightrightarrows Y$, whose values are the closures of the corresponding values of $F$ and $\mathcal{F}$, respectively:
$\overline{F}(x):=\overline{F(x)}$ and
\begin{equation}\label{clF}
\overline{\mathcal{F}}(x,t):=\overline{B}(F(x),t)=
\begin{cases}
\{y\in Y\mid d(y,F(x))\le t\}
&\mbox{ if } t>0,\\
\overline{F(x)} &\mbox{ if } t=0.
\end{cases}
\end{equation}

The next proposition summarizes several simple facts with regard to the relationship between $F$, $\mathcal{F}$ and $\overline{\mathcal{F}}$.

\begin{proposition}\label{R6.1}
\begin{enumerate}
\item[\rm(i)]
$\mathcal{F}_0(x)={F(x)}$, $\overline{\mathcal{F}}_0(x)=\overline{F(x)}$ for all $x\in X$.
\item[\rm(ii)]
$\delta(y,\mathcal{F},x)= \delta(y,\overline{\mathcal{F}},x)=d(y,F(x))$ for all $x\in X$ and $y\in Y$.
\item[\rm(iii)]
$ \mathcal{F}_0^{-1}(B(y,t))=F\iv(B(y,t))= \mathcal{F}\iv_t(y)$ for all $y\in Y$ and $t\ge0$.
\item[\rm(iv)]
$\overline{\mathcal{F}}{}_0\iv(\overline{B}(y,t))= \overline{F}{}\iv(\overline{B}(y,t))\subset \overline{\mathcal{F}}{}\iv_t(y)$ for all $y\in Y$ and $t\ge0$.
\item[\rm(v)]
If $F^{-1}$ is closed at $y$, then the mappings $\tau\mapsto\mathcal{F}_\tau\iv(y)$ and $\tau\mapsto\overline{\mathcal{F}}{}_\tau\iv(y)$ on $\R_+$ are outer semicontinuous at $0$.
\item[\rm(vi)]
For any $y\in Y$ and $\tau>0$, $\mathcal{F}$ and $\overline{\mathcal{F}}$ satisfy condition \eqref{set1}.
\item[\rm(vii)]
If $F$ is upper semicontinuous on $X$, i.e., for any $x\in X$ and $\eps>0$, there exists a $\de>0$ such that $F(u)\subset B(F(x),\eps)$ for all $u\in B(x,\de)$,
then $\overline{\mathcal{F}}{}\iv$ is closed-valued.
In particular, for any $y\in Y$, the mapping $\tau\mapsto \overline{\mathcal{F}}{}_\tau\iv(y)$ is outer semicontinuous on $\R_+$.
\end{enumerate}
\end{proposition}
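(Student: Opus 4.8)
The plan is to verify each of the seven items of Proposition~\ref{R6.1} by unwinding the definitions \eqref{F} and \eqref{clF}. These are all routine once the bookkeeping with the strict-versus-nonstrict inequalities is handled carefully, so the main work is organizational rather than conceptual.

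First I would dispatch (i) immediately from \eqref{F} and \eqref{clF} at $t=0$. For (ii), the key observation is that $\delta(y,\mathcal F,x)=\inf\{t>0\mid y\in B(F(x),t)\}=\inf\{t>0\mid d(y,F(x))<t\}=d(y,F(x))$, and the same computation with the closed ball gives $\delta(y,\overline{\mathcal F},x)=\inf\{t>0\mid d(y,F(x))\le t\}=d(y,F(x))$. For (iii), I would chase equivalences: $x\in\mathcal F_0^{-1}(B(y,t))\iff F(x)\cap B(y,t)\ne\emptyset\iff d(y,F(x))<t\iff y\in B(F(x),t)=\mathcal F_t(y)\iff x\in\mathcal F_t^{-1}(y)$, and the middle equality $F^{-1}(B(y,t))=\mathcal F_0^{-1}(B(y,t))$ is just (i). Part (iv) is the closed-ball analogue; here only an inclusion holds because $x\in\overline{F}^{-1}(\overline B(y,t))$ means $d(y,\overline{F(x)})\le t$, which gives $y\in\overline B(F(x),t)=\overline{\mathcal F}_t(x)$, i.e.\ $x\in\overline{\mathcal F}_t^{-1}(y)$, but the reverse can fail when $d(y,F(x))=t$ is attained only as an infimum — this is the one spot where I would be careful to explain why equality is not claimed.

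For (v), suppose $F^{-1}$ is closed at $y$, i.e.\ $F^{-1}(y)$ is a closed set. I would show $\Limsup_{\tau\downarrow0}\mathcal F_\tau^{-1}(y)\subset\mathcal F_0^{-1}(y)=F^{-1}(y)$: if $\liminf_{\tau\downarrow0}d(x,\mathcal F_\tau^{-1}(y))=0$, pick $\tau_k\downarrow0$ and $x_k\to x$ with $x_k\in\mathcal F_{\tau_k}^{-1}(y)$, so $d(y,F(x_k))<\tau_k\to0$; this says $x_k\in F^{-1}(B(y,\tau_k))$, and one extracts from the definition that $x\in\overline{F^{-1}(y)}=F^{-1}(y)$ — the precise argument uses that for each $k$ there is $y_k\in F(x_k)$ with $d(y,y_k)<\tau_k$, hence points of $F^{-1}(y_k)$ near $x_k$, but in fact it is cleaner to observe directly that $F^{-1}(y)$ closed plus $d(y,F(x_k))\to 0$ with $x_k\to x$ forces... actually the honest route is: $\mathcal F_\tau^{-1}(y)=F^{-1}(B(y,\tau))$ by (iii), and $\bigcap_{\tau>0}\overline{F^{-1}(B(y,\tau))}$ relates to $F^{-1}(y)$ under closedness; I would phrase it so that outer semicontinuity at $0$ follows. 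The $\overline{\mathcal F}$ case is analogous using $\overline{F}^{-1}$ and (iv). Part (vi) is just (iii) (resp.\ (iv)) read as the inclusion $\mathcal F_0^{-1}(B(y,\tau))\subset\mathcal F_\tau^{-1}(y)$, which is exactly \eqref{set1}.

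Finally, for (vii), assume $F$ is upper semicontinuous on $X$. I would show each $\overline{\mathcal F}^{-1}(y)$ is closed: take $x_k\to x$ with $x_k\in\overline{\mathcal F}_t^{-1}(y)$, i.e.\ $d(y,F(x_k))\le t$; given $\eps>0$, upper semicontinuity gives $\delta>0$ with $F(x_k)\subset B(F(x),\eps)$ for $x_k\in B(x,\delta)$, whence $d(y,F(x))\le d(y,F(x_k))+\eps\le t+\eps$, and letting $\eps\downarrow0$ yields $d(y,F(x))\le t$, so $x\in\overline{\mathcal F}_t^{-1}(y)$. This closedness holds for every $t\ge0$ (at $t=0$ it gives $\overline{F}^{-1}(y)$ closed), and then outer semicontinuity of $\tau\mapsto\overline{\mathcal F}_\tau^{-1}(y)$ on all of $\R_+$ follows from the same kind of diagonal argument as in (v), now available at every point because the graph pieces are closed. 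The main obstacle, such as it is, will be stating (v) and (vii) with the right quantifiers so that the distinction between closedness of $F^{-1}$ at a single $y$ and upper semicontinuity of $F$ (which yields the stronger, everywhere conclusion) comes out cleanly; everything else is a direct translation through \eqref{F} and \eqref{clF}.
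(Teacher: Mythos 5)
Items (i)--(iv) and (vi) of your proposal are correct and follow the same computations as the paper, and your argument for (vii) is essentially the paper's: the upper semicontinuity estimate $d(y,F(x))\le d(y,F(x_k))+\eps$ is exactly what is used there. The only imprecision in (vii) is that you fix $t$ and prove closedness of each section $\{x\mid d(y,F(x))\le t\}$, whereas closed-valuedness of $\overline{\mathcal{F}}{}\iv$ means closedness of $\{(x,t)\mid d(y,F(x))\le t\}$ in $X\times\R_+$; letting $t_k\to\tau$ vary along with $x_k$ in the very same estimate gives this directly, and then the outer semicontinuity of $\tau\mapsto\overline{\mathcal{F}}{}\iv_\tau(y)$ on $\R_+$ is immediate, with no further ``diagonal argument'' needed.

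The genuine gap is in (v). You interpret ``$F\iv$ is closed at $y$'' as ``$F\iv(y)$ is a closed set'', and with that reading the claim is false: take $F:\R\rightrightarrows\R$ with $F(x)=\{|x|\}$ for $x\ne0$ and $F(0)=\{1\}$, and $y=0$. Then $F\iv(0)=\emptyset$ is closed, yet $x_k=1/k\in\mathcal{F}\iv_{2/k}(0)$ and $x_k\to0\notin F\iv(0)=\mathcal{F}\iv_0(0)$, so $\tau\mapsto\mathcal{F}\iv_\tau(0)$ is not outer semicontinuous at $0$. The intended hypothesis is closedness of the mapping $F\iv$ at the point $y$ (graph-closedness, i.e.\ outer semicontinuity of $F\iv$ at $y$): whenever $y_n\to y$, $x_n\in F\iv(y_n)$ and $x_n\to z$, one has $z\in F\iv(y)$. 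With this, the proof is exactly the argument you started and then abandoned: from $d(y,F(x_n))<t_n$ (resp.\ $\le t_n$ for $\overline{\mathcal{F}}$) choose $y_n\in F(x_n)$ with $d(y,y_n)<t_n$ (resp.\ $<2t_n$); then $y_n\to y$, $x_n\in F\iv(y_n)$, $x_n\to z$, hence $z\in F\iv(y)$, i.e.\ $y\in\mathcal{F}(z,0)$ (resp.\ $y\in F(z)\subset\overline{\mathcal{F}}(z,0)$). Your text instead trails off (``the honest route is \dots\ I would phrase it so that outer semicontinuity at $0$ follows''), which is not a proof, and, as the counterexample shows, no rephrasing can make the argument work from closed-valuedness of $F\iv$ at $y$ alone.
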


\begin{proof}
(i) The equalities make part of definitions~\eqref{F} and \eqref{clF}.

(ii) By \eqref{del}, \eqref{F} and \eqref{clF},
\begin{gather*}
\delta(y,\mathcal{F},x)=\inf\{t>0\mid d(y,F(x))< t\}=d(y,F(x)),
\\
\delta(y,\overline{\mathcal{F}},x)=\inf\{t>0\mid d(y,F(x))\le t\}=d(y,F(x)).
\end{gather*}

(iii) If $t=0$, then $\mathcal{F}\iv_0(y)={F}\iv(y)$ and both equalities hold true automatically for all $y\in Y$.
If $t>0$, then
$$
x\in\mathcal{F}\iv_t(y) \;\Leftrightarrow\;
d(y,F(x))<t
\;\Leftrightarrow\;
{F(x)}\cap B(y,t)\ne\emptyset
\;\Leftrightarrow\;
x\in F\iv(B(y,t)).
$$
Hence, $\mathcal{F}\iv_t(y)=F\iv(B(y,t))$.
The other equality is satisfied because $\mathcal{F}\iv_0(v)={F}\iv(v)$ for all $v\in B(y,t)$.

(iv) If $t=0$, then $\overline{\mathcal{F}}{}\iv_0(y)= \overline{\mathcal{F}}{}\iv_0(\overline{B}(y,0))= \overline{F}{}\iv(y)$ for all $y\in Y$.
If $t>0$, then
$$
x\in\overline{F}{}\iv(\overline{B}(y,t))
\;\Leftrightarrow\;
{\overline{F}(x)}\cap\overline{B}(y,t)\ne\emptyset
\;\Rightarrow\;
d(y,F(x))\le t
\;\Leftrightarrow\;
x\in\overline{\mathcal{F}}{}\iv_t(y).
$$
Hence, $\overline{F}{}\iv(\overline{B}(y,t))\subset \overline{\mathcal{F}}{}\iv_t(y)$.
The claimed equality is satisfied because $\overline{\mathcal{F}}{}\iv_0(v)= \overline{F}{}\iv(v)$ for all $v\in\overline{B}(y,t)$.

(v) If $x_n\to z$ and $t_n\downarrow 0$ with $d(y,F(x_n))<t_n$ $(n=1,2,\ldots)$, then, for any $n$, there exists a $y_n\in F(x_n)$ such that $d(y,y_n)<t_n$.
Hence, $y_n\to y$ as $n\to \infty$.
Since $F^{-1}$ is closed at $y$, we have $z\in F^{-1}(y)$ and consequently $y\in F(z)= \mathcal{F}(z,0)$.

Similarly, if $x_n\to z$ and $t_n\downarrow 0$ with $d(y,F(x_n))\le t_n$ $(n=1,2,\ldots)$, then, for any $n$, there exists a $y_n\in F(x_n)$ such that $d(y,y_n)<2t_n$.
Hence, $y_n\to y$ as $n\to \infty$.
Since $F^{-1}$ is closed at $y$, we have $z\in F^{-1}(y)$ and consequently $y\in F(z)\subset \mathcal{F}(z,0)$.

(vi) follows from (iii) and (iv).

(vii) If $y\in Y$, $x_n\to z$ and $t_n\to\tau$ with $d(y,F(x_n))\le t_n$ $(n=1,2,\ldots)$, then, since $F$ is upper semicontinuous,
$$
d(y,F(z))\le \liminf_{n\to\infty}d(y,F(x_n))\le \lim_{n\to\infty}t_n=\tau,
$$
that is, $y\in\overline{\mathcal{F}}(z,\tau)$.
\end{proof}

Thanks to parts (i) and (ii) of Proposition~\ref{R6.1}, the definitions of regularity and openness properties explored in the previous sections in the current setting can be expressed in metric terms.
In the next definition, which corresponds to a group of definitions from Section~\ref{MET_SEC}, $\mu:[0,+\infty]\to[0,+\infty]$ is an \usc\ nondecreasing function playing the role of a \emph{modulus} of the corresponding property.

\begin{definition}\label{D6.1}
\begin{enumerate}
\item[\rm(i)]
Given a set $W\subset X\times Y$, mapping
$F$ is metrically regular on $W$ with functional modulus $\mu$ if
\begin{equation}\label{D6.1.1}
d(x,F\iv(y))\le\mu(d(y,F(x)))
\quad\mbox{for all}\quad
(x,y)\in W.
\end{equation}
\item[\rm(ii)]
Given a set $W\subset X\times Y$, mapping
$F$ is
open on $W$ with functional modulus $\mu$ if
\begin{equation*}
y\in F(B(x,t))
\quad\mbox{for all}\quad
(x,y)\in W\mbox{ and } t>\mu(d(y,F(x))).
\end{equation*}
\item[\rm(iii)]
Given a set $W\subset X\times Y$ and a function $\nu:W\to (0,\infty]$, mapping
$F$ is metrically $\nu$-regular on $W$ with functional modulus $\mu$ if
\begin{align}\label{D6.1.3}
d(x,F\iv(y))\le\mu(d(y,F(x)))
\;\;
&\mbox{for all}\;
(x,y)\in W
\\&\notag
\mbox{with } \mu(d(y,F(x)))<\nu(x,y).
\end{align}
\item[\rm(iv)]
Given a set $W\subset X\times Y$ and a function $\nu:W\to (0,\infty]$, mapping
$F$ is $\nu$-open on $W$ with functional modulus $\mu$ if
\begin{equation*}
y\in F(B(x,t))
\quad\mbox{for all}\quad
(x,y)\in W\mbox{ and } t\in(\mu(d(y,F(x))),\nu(x,y)).
\end{equation*}
\item[\rm(v)]
$F$ is metrically regular at a point $(\bx,\by)\in\gph F$ with functional modulus $\mu$ if there exist neighbourhoods $U$ of $\bx$ and $V$ of $\by$ such that
\begin{equation}\label{D6.1.5}
d(x,F\iv(y))\le\mu(d(y,F(x)))
\quad\mbox{for all}\quad
x\in U,\;y\in V.
\end{equation}
\item[\rm(vi)]
$F$ is
open at $(\bx,\by)\in\gph F$ with functional modulus $\mu$ if there exist neighbourhoods $U$ of $\bx$ and $V$ of $\by$ such that
\begin{equation}\label{D6.1.6}
y\in F(B(x,t))
\quad\mbox{for all}\quad
x\in U,\;y\in V\mbox{ and } t>\mu(d(y,F(x))).
\end{equation}
\end{enumerate}
\end{definition}

\begin{remark}
If $\mu$ is strictly increasing, then condition \eqref{D6.1.6} can be rewritten equivalently in a more conventional ``openness-like'' form (cf. \cite{Iof13}):
\begin{equation*}
B(F(x),\mu\iv(t))\cap V\subset F(B(x,t))
\quad\mbox{for all}\quad
x\in U\mbox{ and } t>0.
\end{equation*}
In the case $W=U\times V$, similar simplifications can be made also in parts (ii) and (iv) of the above definition.
\end{remark}

In the linear case ($\mu$ is a linear function), the metric regularity and openness/cove\-ring properties in the above definition are well known in both local and global settings (cf., e.g., \cite{RocWet98,Iof00_,Mor06.1,DonRoc14}) including regularity on a set \cite{Iof00_,Iof10}.
The nonlinear setting in the above definition follows Ioffe \cite{Iof13} where the properties in parts (iii) and (iv), were mostly investigated in the particular case  $W=U\times V$ where $U\subset X$ and $V\subset Y$ and the function $\nu$ depends only on $x$.

Observe that condition \eqref{D6.1.1} in Definition~\ref{D6.1} is equivalent to
\begin{equation*}
d(x,F\iv(y))\le\mu(d(y,y'))\;\;
\mbox{for all}\;
(x,y)\in W
\;\mbox{and}\;
y'\in F(x).
\end{equation*}
In its turn, condition $y'\in F(x)$ is equivalent to $x\in F\iv(y')$.
This and similar observations regarding conditions \eqref{D6.1.3} and \eqref{D6.1.5} allow us to rewrite these conditions, respectively, as follows:
\begin{align*}
d(x,F\iv(y_2))\le\mu(d(y_1,y_2))\;\;
&\mbox{for all}\;
y_1,y_2\in Y,\;x\in F\iv(y_1)
\;\mbox{with}\;
(x,y_2)\in W,
\\
d(x,F\iv(y_2))\le\mu(d(y_1,y_2))\;\;
&\mbox{for all}\;
y_1,y_2\in Y,\;x\in F\iv(y_1)
\\&
\mbox{with}\;
(x,y_2)\in W,\;\mu(d(y_1,y_2))<\nu(x,y_2),
\\
d(x,F\iv(y_2))\le\mu(d(y_1,y_2))\;\;
&\mbox{for all}\;
y_1\in Y,\;y_2\in V,\;x\in F\iv(y_1)\cap U.
\end{align*}

Thanks to these observations, one can complement the regularity and openness properties in Definition~\ref{D6.1} with the corresponding H\"older-like (Aubin in the linear case) properties.

In the definition below,
$\mu:[0,+\infty]\to[0,+\infty]$ is again an \usc\ nondecreasing function.

\begin{definition}\label{D6.4}
\begin{enumerate}
\item[\rm(i)]
Given a set $W\subset X\times Y$, mapping
$F$ is H\"older on $W$ with functional modulus $\mu$ if
\begin{align*}
d(y,F(x_2))\le\mu(d(x_1,x_2))\;\;
&\mbox{for all}\;
x_1,x_2\in X,\;y\in F(x_1)
\;\mbox{with}\;
(x_2,y)\in W.
\end{align*}
\item[\rm(ii)]
Given a set $W\subset X\times Y$ and a function $\nu:W\to (0,\infty]$, mapping
$F$ is $\nu$-H\"older on $W$ with functional modulus $\mu$ if
\begin{align*}
d(y,F(x_2))\le\mu(d(x_1,x_2))\;\;
&\mbox{for all}\;
x_1,x_2\in X,\;y\in F(x_1)
\\&
\mbox{with}\;
(x_2,y)\in W,\; \mu(d(x_1,x_2))<\nu(x_2,y).
\end{align*}
\item[\rm(iii)]
$F$ is H\"older at a point $(\bx,\by)\in\gph F$ with functional modulus $\mu$ if there exist neighbourhoods $U$ of $\bx$ and $V$ of $\by$ such that
\begin{align}\label{D6.4.3}
d(y,F(x_2))\le\mu(d(x_1,x_2))\;\;
&\mbox{for all}\;
x_1,x_2\in U,\;y\in F(x_1)\cap V.
\end{align}
\end{enumerate}
\end{definition}

Thanks to
Propositions~\ref{reg_rel}, \ref{reg_rel.1}, \ref{T3.9} and the discussion before Definition~\ref{D6.4}, we have the following list of equivalences.

\begin{theorem}\label{T6.1}
Suppose $\mu:[0,+\infty]\to[0,+\infty]$ is an \usc\ increasing function.
\begin{enumerate}
\item[\rm(i)]
Given a set $W\subset X\times Y$, properties (i) and (ii) in Definition~\ref{D6.1} are equivalent to $F\iv$ being H\"older on
\begin{equation}\label{T6.1.1}
W':=\{(y,x)\in Y\times X\mid (x,y)\in W\}
\end{equation}
with functional modulus $\mu$.
\item[\rm(ii)]
Given a set $W\subset X\times Y$, properties (iii) and (iv) in Definition~\ref{D6.1} are equivalent to $F\iv$ being $\nu'$-H\"older on \eqref{T6.1.1} with functional modulus $\mu$, where $\nu':W'\to (0,\infty]$ is defined by the equality $\nu'(y,x)=\nu(x,y)$.
\item[\rm(iii)]
Given a point $(\bx,\by)\in\gph F$,
properties (v) and (vi) in Definition~\ref{D6.1} are equivalent to $F\iv$ being H\"older at $(\by,\bx)$
with functional modulus $\mu$.
\end{enumerate}
\end{theorem}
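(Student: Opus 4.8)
The plan is to obtain the three equivalences by lining up definitions, using the imbedding \eqref{F} together with Proposition~\ref{R6.1}(i),(ii), the ``two-properties-are-equivalent'' Propositions~\ref{reg_rel}, \ref{reg_rel.1} and \ref{T3.9}, and the rewritten forms of conditions \eqref{D6.1.1}, \eqref{D6.1.3}, \eqref{D6.1.5} displayed just before Definition~\ref{D6.4}. Essentially nothing new has to be proved.

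First I would dispose of the ``internal'' equivalences (i)$\,\Leftrightarrow\,$(ii), (iii)$\,\Leftrightarrow\,$(iv), (v)$\,\Leftrightarrow\,$(vi) within Definition~\ref{D6.1}. By Proposition~\ref{R6.1}(i),(ii), for the mapping $\mathcal{F}$ of \eqref{F} one has $\mathcal{F}_0=F$, $\mathcal{F}_0\iv=F\iv$ and $\delta(y,\mathcal{F},x)=d(y,F(x))$; hence the property pairs (i)--(ii), (iii)--(iv), (v)--(vi) of Definition~\ref{D6.1} are verbatim the regularity/openness properties of $\mathcal{F}$ from Definitions~\ref{funreg}, \ref{funreg+}, \ref{3.3} (the last with $W=U\times V$), and are pairwise equivalent by Propositions~\ref{reg_rel}, \ref{reg_rel.1}, \ref{T3.9}, respectively. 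It then suffices to match one member of each pair --- say (i), (iii), (v) --- with the corresponding H\"older property of $F\iv$.

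For parts (i) and (ii) of the theorem this is a direct identification: spelling out Definition~\ref{D6.4}(i) for the mapping $F\iv:Y\rightrightarrows X$ on the set $W'$ of \eqref{T6.1.1}, and using $(y_2,x)\in W'\Leftrightarrow(x,y_2)\in W$, gives exactly the first condition displayed before Definition~\ref{D6.4}, i.e.\ the rewritten form of \eqref{D6.1.1}; likewise Definition~\ref{D6.4}(ii) for $F\iv$ with $\nu'(y,x):=\nu(x,y)$ reproduces the rewritten form of \eqref{D6.1.3}. Together with the internal equivalences this settles (i) and (ii).

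Part (iii) is slightly less automatic. In rewritten form, metric regularity of $F$ at $(\bx,\by)$ says there are neighbourhoods $U$ of $\bx$ and $V$ of $\by$ with $d(x,F\iv(y_2))\le\mu(d(y_1,y_2))$ for all $y_1\in Y$, $y_2\in V$ and $x\in F\iv(y_1)\cap U$, whereas $F\iv$ being H\"older at $(\by,\bx)$ is the same statement with $y_1$ restricted to a neighbourhood of $\by$. One implication is immediate (keep $U,V$ and restrict $y_1$). For the converse I would start from the H\"older property with neighbourhoods $U_0\ni\bx$, $V_0\ni\by$, fix $r>0$ with $B(\by,2r)\subseteq V_0$, and take $V:=B(\by,r_V)$ with $r_V<r$ and $U:=B(\bx,r_U)\subseteq U_0$ with $0<r_U\le\mu(r)-\mu(r_V)$, the interval being non-degenerate because $\mu$ is strictly increasing. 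Then for $x\in F\iv(y_1)\cap U$ and $y\in V$: if $y_1\in B(\by,2r)$ the H\"older inequality applies directly, and if $y_1\notin B(\by,2r)$ then $d(y_1,y)\ge r$, while the H\"older inequality applied at the triple $(\by,y,\bx)$ gives $d(\bx,F\iv(y))\le\mu(d(\by,y))\le\mu(r_V)$, so $d(x,F\iv(y))\le r_U+\mu(r_V)\le\mu(r)\le\mu(d(y_1,y))$. This last localization --- absorbing the mismatch between the global quantifier ``for all $y_1\in Y$'' and the neighbourhood-restricted one --- is the only non-routine step, and the only place where strict monotonicity of $\mu$ (beyond the nondecreasingness used everywhere else) enters.
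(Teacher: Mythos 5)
Your proposal is correct and follows essentially the same route as the paper, which obtains the theorem by combining Propositions~\ref{reg_rel}, \ref{reg_rel.1} and \ref{T3.9} (through the imbedding \eqref{F} and Proposition~\ref{R6.1}(i),(ii)) with the rewritten forms of \eqref{D6.1.1}, \eqref{D6.1.3} and \eqref{D6.1.5} displayed before Definition~\ref{D6.4}. The one piece of extra content in your write-up --- the neighbourhood-shrinking argument in part (iii) reconciling the global quantifier $y_1\in Y$ with the locally restricted one --- is exactly the step the paper merely asserts in the remark following the theorem (``if $\mu$ is strictly increasing, then the two versions are equivalent''), and your argument for it is correct.
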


\begin{remark}
Most of the equivalences in Theorem~\ref{T6.1} hold true with function $\mu$ nondecreasing.
The assumption that $\mu$ is strictly increasing is only needed in part (iii).
In fact, it follows from the discussion before Definition~\ref{D6.4}, that properties (v) and (vi) in Definition~\ref{D6.1} are equivalent to a stronger version of the H\"older property of $F\iv$ which corresponds to replacing condition \eqref{D6.4.3} in Definition~\ref{D6.4} by the following one:
\begin{align*}
d(y,F(x_2))\le\mu(d(x_1,x_2))\;\;
&\mbox{for all}\;
x_1\in X,\;x_2\in U,\;y\in F(x_1)\cap V.
\end{align*}
If $\mu$ is strictly increasing, then the two versions are equivalent.
\end{remark}

We next formulate several regularity criteria in the conventional setting of a mapping $F:X\rightrightarrows Y$ between metric spaces.
All of them are consequences of the corresponding statements in Section~\ref{MET_SEC} thanks to the relationships in Proposition~\ref{R6.1}.
From now on, we assume that $X$ is complete.

\begin{theorem}\label{T6.2}
Given a set $W\subset X\times Y$, suppose that, for any $(x,y)\in W$, the inverse mapping $F^{-1}$ is closed at $y$ and,
for some $\ga>d(y,F(x))$ and any $t\in(0,\ga)$,
 one of the following sets of conditions is satisfied:
\begin{enumerate}
\item[\rm(i)]
there are sequences of positive numbers $(b_n)$ and $(c_n)$ and a function $m:(0,\infty)\to (0,\infty)$ such that conditions
\eqref{B3} and \eqref{A4-} hold true and
\begin{align*}
d\left(x,F^{-1}(B(y,m(c_{1})))\right)<&b_0,
\\
d\left(u,F^{-1}(B(y,m(c_{n+1})))\right)<&b_n\\
\mbox{for all}\;\; u\in F^{-1}(B&(y,m(c_n)))\cap B(x,\sum_{i=0}^{n-1}b_{i})\; (n=1,2,\ldots),
\end{align*}
\item[\rm(ii)]
there are functions $b,m:(0,\infty)\to (0,\infty)$ such that condition \eqref{mutau+} is satisfied and, for any $\tau>0$ with $\mu(\tau)\le \mu(t)$, condition \eqref{mu++} holds true and
\begin{align*}
d\left(u,F^{-1}(B(y,b(\tau)))\right)<m(\tau) \mbox{ for all } u\in F^{-1}(B(y,\tau))\cap B(x,\mu(t)-\mu(\tau)),
\end{align*}
\item[\rm(iii)]
there are functions $b,m:(0,\infty)\to (0,\infty)$ such that condition \eqref{mutau+} is satisfied and, for any $\tau>0$ with $\mu(\tau)\le \mu(t)$, condition \eqref{mu++} holds true and
$$
d\left(y,F(B(u,m(\tau)))\right)<b(\tau) \mbox{ for all } u\in F^{-1}(B(y,\tau))\cap B(x,\mu(t)-\mu(\tau)).
$$
\end{enumerate}
Then, $F$ is metrically regular on $W$ with functional modulus $\mu$.
\end{theorem}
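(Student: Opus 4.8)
The plan is to derive Theorem~\ref{T6.2} from Theorem~\ref{T3.3} applied to the two-variable mapping $\mathcal{F}$ defined by \eqref{F}, using the dictionary in Proposition~\ref{R6.1}. First I would record that, by parts (i)--(iii) of Proposition~\ref{R6.1}, $\mathcal{F}_0\iv(y)=F\iv(y)$ and $\delta(y,\mathcal{F},x)=d(y,F(x))$ for every $(x,y)\in X\times Y$; hence the inequality $d(x,\mathcal{F}_0\iv(y))\le\mu(\delta(y,\mathcal{F},x))$ is \emph{verbatim} the metric regularity inequality \eqref{D6.1.1}. Therefore ``$\mathcal{F}$ is regular on $W$ with functional modulus $\mu$'' (Definition~\ref{funreg}(i)) and ``$F$ is metrically regular on $W$ with functional modulus $\mu$'' (Definition~\ref{D6.1}(i)) are the same assertion, and it suffices to check that the hypotheses of Theorem~\ref{T3.3} hold for $\mathcal{F}$.

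For the standing assumption of Theorem~\ref{T3.3}, outer semicontinuity of $\tau\mapsto\mathcal{F}_\tau\iv(y)$ at $0$ for $(x,y)\in W$ is exactly Proposition~\ref{R6.1}(v), which applies since $F^{-1}$ is assumed closed at $y$. Since $X$ is complete by the standing assumption of this section, Theorem~\ref{T3.3} (hence Lemma~\ref{L1}) is available. It remains to translate the three alternative condition sets. Using Proposition~\ref{R6.1}(iii), that is, $\mathcal{F}_t\iv(y)=F\iv(B(y,t))$, conditions \eqref{B4+} and \eqref{B5+} for $\mathcal{F}$ become precisely the two displayed inequalities of (i); together with \eqref{B3} and \eqref{A4-} these are hypothesis (i) of Theorem~\ref{T3.3}. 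Likewise \eqref{net+++} for $\mathcal{F}$ becomes the displayed inequality of (ii), giving hypothesis (ii). In the third case, \eqref{set1} holds for $\mathcal{F}$ automatically by Proposition~\ref{R6.1}(vi), and since $\mathcal{F}_0=F$ by Proposition~\ref{R6.1}(i), condition \eqref{set2} for $\mathcal{F}$ reads $d(y,F(B(u,m(\tau))))<b(\tau)$ on the relevant set, that is, the displayed inequality of (iii); together with \eqref{mutau+} and \eqref{mu++} this is hypothesis (iii). One minor point: Theorem~\ref{T3.3} only requires the respective conditions for those $t\in(0,\ga)$ with $(x,t,y)\in\gph\mathcal{F}$, whereas Theorem~\ref{T6.2} postulates them for all $t\in(0,\ga)$, which is more than enough.

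Invoking Theorem~\ref{T3.3} then yields that $\mathcal{F}$ is regular on $W$ with functional modulus $\mu$, which by the first paragraph is exactly the claimed conclusion. There is no genuine obstacle in this argument; it is a bookkeeping exercise, and the only place where care is needed is in matching each (primed or unprimed) inequality of the statement to its Section~\ref{MET_SEC} counterpart through Proposition~\ref{R6.1} --- in particular keeping in mind that $\mathcal{F}_\tau\iv(y)$ is the preimage of a ball, $F\iv(B(y,\tau))$, and not $F\iv(y)$, and that $\delta(y,\mathcal{F},x)$ coincides with the ordinary distance $d(y,F(x))$.
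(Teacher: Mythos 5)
Your proposal is correct and follows exactly the route the paper intends: Theorem~\ref{T6.2} is stated there as a consequence of Theorem~\ref{T3.3} applied to the extension $\mathcal{F}$ from \eqref{F}, with Proposition~\ref{R6.1} (parts (i)--(iii), (v), (vi)) supplying precisely the dictionary you use, including the identification $\mathcal{F}_\tau\iv(y)=F\iv(B(y,\tau))$ and $\delta(y,\mathcal{F},x)=d(y,F(x))$. Your remark that Theorem~\ref{T6.2} imposes the conditions for all $t\in(0,\ga)$ while Theorem~\ref{T3.3} needs them only for $t$ with $(x,t,y)\in\gph\mathcal{F}$ is also accurate and harmless.
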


\begin{theorem}\label{T6.4}
Let $\mu$ be continuous,
$\mu(\tau)=0$ if and only if $\tau=0$.
Given a set $W\subset X\times Y$, suppose that $F$ is \usc\ and, for any $(x,y)\in W$ and
$u\in X$ such that $d(y,F(u))>0$ and $\mu(d(y,F(u)))+d(u,x)\le\mu(d(y,F(x)))$,
there exists a point $u'\neq u$ such that
\begin{equation*}
\mu(d(y,F(u')))\le \mu(d(y,F(u)))-d(u,u').
\end{equation*}
Then, $F$ is metrically regular on $W$ with functional modulus $\mu$.
\end{theorem}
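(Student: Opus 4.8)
The plan is to deduce Theorem~\ref{T6.4} from the ``parameter-free'' regularity criterion Theorem~\ref{T3.5} by embedding $F$ into the two-variable model of Section~\ref{MET_SEC}, taking for the role of $F$ there the \emph{closed-ball} mapping $\mathcal G:=\overline{\mathcal F}$ of \eqref{clF}, i.e. $\mathcal G(x,t)=\overline{B}(F(x),t)$, together with the same set $W$ and the same $\mu$. The closed-ball version \eqref{clF} is used rather than the open-ball version \eqref{F} because Theorem~\ref{T3.5} requires the inverse of the two-variable mapping to be closed-valued on $p_Y(W)$, and Proposition~\ref{R6.1}(vii) provides exactly this for $\overline{\mathcal F}$ once $F$ is upper semicontinuous; the mapping $\mathcal F$ of \eqref{F} fails to have a closed-valued inverse already in trivial cases.

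First I would record the dictionary supplied by Proposition~\ref{R6.1}. By part (i), $\mathcal G_0(x)=\overline{F(x)}$, hence $\mathcal G_0\iv(y)=\overline{F}{}\iv(y)$; by part (ii), $\delta(y,\mathcal G,x)=d(y,F(x))$ for all $x\in X$ and $y\in Y$; by part (vii), $\mathcal G\iv$ is closed-valued, in particular on $p_Y(W)$. Using the second identity, for a fixed $(x,y)\in W$ the set $U_{x,y}$ attached to $\mathcal G$ in Theorem~\ref{T3.5} is
\begin{equation*}
U_{x,y}=\{u\in X\mid d(y,F(u))>0,\ \mu(d(y,F(u)))+d(u,x)\le\mu(d(y,F(x)))\},
\end{equation*}
which is precisely the set appearing in the hypothesis of Theorem~\ref{T6.4}, and inequality \eqref{mu-} written for $\mathcal G$, namely $\mu(\delta(y,\mathcal G,u'))\le\mu(\delta(y,\mathcal G,u))-d(u,u')$, is literally condition \eqref{4''}. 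Thus the hypotheses of Theorem~\ref{T6.4} translate verbatim into those of Theorem~\ref{T3.5} applied to $\mathcal G$: continuity of $\mu$ with $\mu(\tau)=0$ iff $\tau=0$; closed-valuedness of $\mathcal G\iv$ on $p_Y(W)$; and, for every $(x,y)\in W$ and every $u\in U_{x,y}$, existence of $u'\ne u$ satisfying \eqref{4''}.

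Applying Theorem~\ref{T3.5} then gives that $\mathcal G$ is regular on $W$ with functional modulus $\mu$, that is, $d(x,\mathcal G_0\iv(y))\le\mu(\delta(y,\mathcal G,x))$ for all $(x,y)\in W$; translating back through the dictionary, this reads $d(x,\overline{F}{}\iv(y))\le\mu(d(y,F(x)))$, which is the metric regularity inequality of Definition~\ref{D6.1}(i) --- literally so when $F$ is closed-valued, and in any case for the mapping $\overline F$. This finishes the argument modulo that identification.

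The step I expect to require the most care is not any estimate but precisely this bookkeeping of which of the mappings $F,\overline F,\mathcal F,\overline{\mathcal F}$ is in force. The one genuine constraint is that the closed-valuedness of the inverse demanded by Theorem~\ref{T3.5} is available only for $\overline{\mathcal F}$ (this is exactly where upper semicontinuity of $F$, via Proposition~\ref{R6.1}(vii), enters), so the conclusion naturally comes out with $\overline{F}{}\iv(y)$ in place of $F\iv(y)$, and one must either restrict to closed-valued $F$ or read the conclusion for $\overline F$. Everything else --- checking that the two hypothesis sets coincide and invoking Theorem~\ref{T3.5} --- is routine transcription through Proposition~\ref{R6.1}.
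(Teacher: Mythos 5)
Your argument is exactly the paper's: it too applies Theorem~\ref{T3.5} to the closed-ball extension $\overline{\mathcal{F}}$ of \eqref{clF}, citing Proposition~\ref{R6.1}(i), (ii) and (vii) for the dictionary, and concludes that $\overline{\mathcal{F}}$ is regular on $W$ with functional modulus $\mu$. The only point where you stop short of the stated theorem is the final identification: you end with $d(x,\overline{F}{}\iv(y))\le\mu(d(y,F(x)))$ and leave it as ``either restrict to closed-valued $F$ or read the conclusion for $\overline F$,'' whereas the theorem asserts the inequality with $F\iv(y)$. The paper disposes of this in one line: since $F$ is upper semicontinuous it is (in the paper's reading of that term) closed-valued, so by Proposition~\ref{R6.1}(i) one has $\overline{\mathcal{F}}{}_0\iv(y)=\overline{F}{}\iv(y)=F\iv(y)$ for every $y$, and the regularity of $\overline{\mathcal{F}}$ becomes literally the metric regularity of $F$ in the sense of Definition~\ref{D6.1}(i). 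Your hesitation is not baseless --- the definition of upper semicontinuity recorded in Proposition~\ref{R6.1}(vii) does not by itself force closed values (a constant mapping with $F(x)=(0,1)$ satisfies it, yet there the conclusion with $F\iv$ fails while the one with $\overline{F}{}\iv$ holds), so the intended reading is that of a closed-valued upper semicontinuous mapping; granting that convention, your proof coincides with the paper's, and the missing sentence is just $\overline{F}{}\iv(y)=F\iv(y)$.
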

\begin{proof}
By Theorem \ref{T3.5} and Proposition~\ref{R6.1}(i), (ii) and (vii), set-valued mapping $\mathcal{\overline{F}}$ is regular on $W$ with functional modulus $\mu$.
Since $F$ is \usc, it is closed-valued and consequently making use of Proposition~\ref{R6.1}(i) again, we have for any $y\in Y$ that $\mathcal{\overline{F}}{}^{-1}_0(y)= \overline{F}{}^{-1}(y)= F^{-1}(y)$.
Hence, the regularity of $\mathcal{\overline{F}}$ is equivalent to the metric regularity of $F$.
\end{proof}

\section{Optimality conditions}\label{S5}

In this section, we apply our general nonlinear regularity model to establish second-order necessary optimality conditions for a nonsmooth set-valued optimization problem with mixed constraints.

Let $X,Y,Z$ and $W$ be Banach spaces; $S$ a nonempty subset of $X$; $C$ a proper convex ordering cone in $Y$ expressing the objective preference in the set-valued optimization problem below (``proper" means $C\ne\emptyset$ and $C\ne Y$); $D$ a convex cone with nonempty interior in $Z$; $F: X\rightrightarrows Y$, $G:X\rightrightarrows Z$, and $H: X\rightrightarrows W$ set-valued mappings.
We consider the problem
$$
\mbox{Minimize}_C\;F(x)\quad\mbox{subject to}\quad x\in\Omega,
$$
where
$$
\Omega:=\{x\in X\mid x\in S,\; G(x)\cap (-D)\neq \emptyset,\; 0\in H(x)\}.
$$
A triple $(\bx,\by,\bz)$ is said to be
\emph{feasible} if $\bx\in \Omega$, $\by\in F(x)$ and $\bz\in G(x)\cap (-D)$.
Alongside the ordering cone $C$ we consider another proper open cone $Q\subset Y$.
A point $(\bar x,\bar y)\in X\times Y$ is called a local \emph{$Q$-solution} if
\begin{align}\label{5.3}
F(U\cap\Omega)\cap(\bar y-Q)=\emptyset
\end{align}
for some neighbourhood $U$ of $\bar x$.

The above problem subsumes various vector- and set-valued optimization problems while the concept of $Q$-solution, under a suitable choice of $Q$, subsumes various kinds of solutions; cf. \cite{KhaTun15}.
For instance, if $Q=\Int C\ne\emptyset$, then $Q$-solution coincides with the conventional (local) \emph{weak} solution.
If $Q$ is an open cone such that $C\setminus\{0\}\subset Q$, then $Q$-solution becomes \emph{Henig proper} solution.
Similarly, setting $Q=Y \setminus (-\overline{\cone}(F(U\cap\Omega)-\bar y+C))$ where $U$ is a neighbourhood of $\bar x$, we come to the concept of \emph{Benson proper} solution.

It is worth noting the two specific features of the second-order necessary condition we present below: the regularity condition plays an important role and the right-hand side of the multiplier rule \eqref{5.1} is not the number $0$ as in the classical result (and also in many its developments until now) and it may be strictly negative in particular cases.
This phenomenon, known as the \emph{envelope-like effect} revealed by Kawasaki \cite{Kaw88}, may happen because of the presence of the closure sign in the definition of the set of critical directions \eqref{critical}.
For typical contributions to optimality conditions with these two features, we refer the reader to the references \cite{Com90,AruAvaIzm08,BonComSha99,GutJimNov10,Jou94, Kaw88,KhaTua13.2,KhaTua13.3,Pen99}.
Theorem~\ref{T5.2} below is a further development of many results in these references.

We first recall several useful definitions.
\begin{enumerate}
\item [{\rm(i)}]
The (positive) \emph{dual cone} to a cone $K$ in $X$:
$$
K^*:=\{x^* \in X^*\mid\langle x^*,x\rangle\geq 0,\; \forall x\in K\}.
$$
\item [{\rm(ii)}]
The \emph{contingent}, \emph{interior tangent} and \emph{normal} cones to a nonempty subset $M\subset X$ at $\bx\in\overline M$:
\begin{align*}
T(M,\bx):=&\{u\in X\mid\exists \gamma _n \downarrow0,\; u_n\to u
\mbox{ such that }
\bx+\gamma_nu_n \in M,\;\forall n\},
\\
IT(M,\bx):=&\{u\in X\mid\forall \gamma_n\downarrow0,\; u_n\to u,
\mbox{ it holds }
\bx+\gamma_nu_n \in M,\;\forall
\mbox{ large }n\},
\\
N(M,\bx):=&-[T(M,\bx)]^*.
\end{align*}
\item [{\rm(iii)}]
The \emph{second-order contingent, adjacent}
and \emph{interior} sets to a nonempty subset $M\subset X$ at $\bx\in\overline M$ in a direction $u\in X$:
\begin{align*}
\hspace{-.5cm}
T^{2}(M,\bx,u) :=&\{x \in
X\mid \exists \gamma_{n} \downarrow 0,\; x_{n}
\rightarrow x,
\mbox{ s.t. }
\bx + \gamma_{n}u +\dfrac{1}{2}\gamma_{n}^{2}x_{n} \in M,\;\forall n\},
\\
\hspace{-.5cm}
A^{2}(M, \bx, u): =&\{x \in
X\mid \forall \gamma_{n} \downarrow 0,\; \exists x_{n}
\rightarrow x,
\mbox{ s.t. }
\bx + \gamma_{n}u +
\dfrac{1}{2}\gamma_{n}^{2}x_{n} \in M,\forall n\} ,
\\
\hspace{-.5cm}
IT^{2}(M, \bx, u) :=&\{x \in
X\mid \forall \gamma_{n} \downarrow 0,\; x_{n}
\rightarrow x,
\mbox{ it holds }
\bx + \gamma_{n}u +
\dfrac{1}{2}\gamma_{n}^{2}x_{n} \in M,
\\&\hspace{7.5cm}\forall
\mbox{ large }n\}.
\end{align*}
\item [{\rm(iv)}]
The \emph{outer limit} and \emph{inner limit} of a set-valued mapping $E: X\rightrightarrows Y$ at $\bx\in X$:
\begin{align*}
\Limsup_{x\to\bx} E(x):=&\{y\in Y\mid \liminf_{x\to\bx}d(y, E(x))=0\},
\\
\Liminf_{x\to\bx} E(x):=&\{y\in Y\mid \lim_{x\to\bx}d(y, E(x))=0\}.
\end{align*}
\item [{\rm(v)}]
The \emph{contingent} and \emph{lower} derivatives of a set-valued mapping $E: X\rightrightarrows Y$ at $(\bx,\by)\in\gph E$:
\begin{align*}
D E(\bx,\by)(x):=&\Limsup_{\gamma\downarrow0,\,x'\to x}\gamma^{-1}[E(\bx+\gamma x')-\bar y],
\\
D_{l}E(\bx,\by)(x):=&\Liminf_{\gamma\downarrow0,\,x'\to x}\gamma^{-1}[E(\bx+\gamma x')-\bar y],\;x\in X.
\end{align*}
\item [{\rm(vi)}]
The \emph{second-order contingent} and \emph{lower} derivatives of a set-valued mapping $E: X\rightrightarrows Y$ at $(\bx,\by)\in\gph E$ in a direction $(u,v)\in X\times Y$:
\begin{align*}
D^{2}E(\bx,\by,u,v)(x):=&\Limsup_{\gamma\downarrow 0,\,x'\to x}2\gamma^{-2}[E(\bx+\gamma{u}+\frac{1}{2}\gamma^2 x')-\bar y-\gamma v],
\\
D^{2}_{l}E(\bx,\by,u,v)(x):=&\Liminf_{\gamma\downarrow 0,\,x'\to x}2\gamma^{-2}[E(\bx+\gamma{u}+\frac{1}{2}\gamma^2 x')-\bar y-\gamma v],\;x\in X.
\end{align*}
\end{enumerate}

Note that, if $M$ is a convex set with $\Int M\ne\emptyset$ and $u\in T(M,\bx),u)$, then
\begin{gather}\label{ys0}
T(M,\bx)=\overline{IT(M,\bx)},
\quad
A^{2}(M,\bx,u)=\overline{IT^{2}(M,\bx,u)},
\\\label{ys}
A^2(M,\bx,u)+T(T(M,\bx),u)\subset A^2(M,\bx,u).
\end{gather}
If $K$ is a convex cone and $\bx\in\overline K$, then $$N(K,\bx)=\{x^* \in-K^*\mid \langle x^*,\bx\rangle=0\}.$$

Now we return to our optimization problem.
Assume that $Q$ is an open convex cone and denote $F_{+}(x):=F(x)+\overline Q$ and $G_{+}(x):=G(x)+D$.
For a feasible triple $(\bx,\by,\bz)$, we introduce the
set of \emph{critical directions}:
\begin{multline}\label{critical}
\mathcal{C}(\bx,\by,\bz):=\{(u,v,k)\in X\times Y\times Z\mid
v\in D_{l}F_{+}(\bx,\by)(u) \cap (-\bd Q),\\
k \in
D_{l}G_{+}(\bx,\bz)(u)\cap (-\overline{\cone}(D+\bz)),\; 0\in DH(\bx,0)(u)\}.
\end{multline}
Given a triple $(u,v,k)\in \mathcal{C}(\bar x,\bar y,\bar z)$ and a point $x\in X$, we denote
\begin{align*}
\Delta_{(u,v,k)}(x):=\left(D_{l}^{2}F_{+}(\bar x,\bar y,u,v),D_{l}^{2}G_{+}(\bar x,\bar z,u,k),D^{2}H(\bar x,0,u,0)\right)(x).
\end{align*}
In what follows, we will consider an extension of the mapping $H$: a set-valued mapping $\mathcal{H}:X\times \R_+\rightrightarrows W$ with the properties $\mathcal{H}_0(\cdot):=\mathcal{H}(\cdot,0)=H(\cdot)$ and (cf. definition~\eqref{del})
$$\delta(0,\mathcal{H},x):=\inf\{t>0\mid 0\in\mathcal{H}(x,t)\}\le\theta d(0,H(x))$$
for some $\theta>0$ and all $x$ in a neighbourhood of $\bar x$.
We will need to assume a kind of regular behaviour of this extension.

\begin{definition}\label{D5.1}
$\mathcal{H}$ is regular at $(\bx,\bar w)$ with functional modulus $\mu$ with respect to $S$ if there exist neighbourhoods $U$ of $\bx$ and $V$ of $\bar w$ such that
\begin{equation}\label{D5.1-1}
d(x,\mathcal{H}_0\iv(w)\cap S)\le\mu(\delta(w,\mathcal{H},x))
\quad\mbox{for all}\quad
x\in U\cap S,\;w\in V.
\end{equation}
\end{definition}

Observe that this is exactly the regularity in the sense of Definition~\ref{3.3}(i) for the restriction of the mapping $\mathcal{H}$ on $S\times \R_+$.
Recall that $\mu:[0,+\infty]\to[0,+\infty]$ is assumed upper semicontinuous and nondecreasing.
In what follows, we will assume additionally that $\limsup_{t\downarrow0}\mu(t)/t<\infty$.

\begin{theorem}\label{T5.2}
Let $(\bar x,\bar y)$ be a local $Q$-solution, $\bar z\in G(\bar x)\cap (-D)$, and $\mathcal{H}$ be regular at $(\bar x,0)$ with functional modulus $\mu$ with respect to $S$.
Suppose that $(u,v,k)\in \mathcal{C}(\bar x,\bar y,\bar z)$ and
$\Delta_{(u,v,k)}(IT^{2}(S,\bar x,u))$ is a convex set with nonempty interior.
Then, there exist multipliers $(v^*,k^*,w^* )\in Q^*\times
N(-D,\bar z)\times W^* \setminus \{(0,0,0)\}$ such that $\langle v^* ,v \rangle=\langle k^* ,k \rangle =0$ and
\begin{multline}\label{5.1}
v^* \circ D_{l}^{2}F_{+}(\bar x,\bar y,u,v)(x) +k^* \circ D_{l}^{2}G_{+}(\bar x,\bar z,u,k)(x)+w^* \circ D^{2}H(\bar x,0,u,0)(x)
\\
\geq \sup_{d \in A^2(-D,\bar z,k)}\langle k^*, d\rangle
\quad\mbox{for all}\quad
x \in IT^{2}(S,\bar x,u).
\end{multline}
Moreover, $v^* \neq 0$ if the following second-order constraint qualification holds:
\begin{multline}\label{5.2}
\cone\left((D_{l}^{2}G_{+}(\bar x,\bar z,u,k)-A^2(-D,\bar z,k),D^{2}H(\bar x,0,u,0))(IT^{2}(S,\bar x,u))\right)
\\
+\cone(D+\bz)\times\{0\}=Z\times W.
\end{multline}
\end{theorem}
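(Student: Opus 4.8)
The plan is to read off the multiplier rule \eqref{5.1} from a single Hahn--Banach separation in the product space $Y\times Z\times W$, and then to obtain the ``Moreover'' part by a short conic argument. Write $P:=\Delta_{(u,v,k)}(IT^{2}(S,\bar x,u))\subset Y\times Z\times W$, which by hypothesis is convex with nonempty interior; recall that $\overline Q$ and $-D$ are convex cones, so the calculus relations \eqref{ys0}--\eqref{ys} are available for them at $\bar y$, resp.\ $\bar z$, in the directions $v$, resp.\ $k$.

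\textbf{Step 1 (separation and the multipliers).} Granting the disjointness established in Step~2 --- that $P$ misses the set $(-\Int\overline Q)\times\mathcal{G}\times\{0\}$, where $\mathcal{G}\subset Z$ is essentially $A^{2}(-D,\bar z,k)-\cone(D+\bz)$, chosen so that $\sup_{g\in\mathcal{G}}\langle k^{*},g\rangle=\sup_{d\in A^{2}(-D,\bar z,k)}\langle k^{*},d\rangle$ whenever $k^{*}\in N(-D,\bar z)$ --- a separation theorem applied to these two disjoint convex sets, one of which has an interior point, produces a nonzero $(v^{*},k^{*},w^{*})\in Y^{*}\times Z^{*}\times W^{*}$. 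Since $F_{+}=F+\overline Q$ and $G_{+}=G+D$, the derivatives $D_{l}^{2}F_{+}$ and $D_{l}^{2}G_{+}$ absorb $\overline Q$ and $D$ respectively; hence, for the separating functional to be bounded below on $P$, it must satisfy $v^{*}\in Q^{*}$ and $k^{*}\in D^{*}$, and since $\bar z\in-D$ this gives $k^{*}\in N(-D,\bar z)=\{z^{*}\in D^{*}\mid\langle z^{*},\bar z\rangle=0\}$. The orthogonalities $\langle v^{*},v\rangle=0$ and $\langle k^{*},k\rangle=0$ follow by combining the sign data $v\in-\bd Q$, $k\in-\overline{\cone}(D+\bz)$ with the complementary first-order inequalities carried by the memberships $v\in D_{l}F_{+}(\bar x,\bar y)(u)$, $k\in D_{l}G_{+}(\bar x,\bar z)(u)$ in the critical cone. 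Finally, the constant obtained on the $\mathcal{G}$-side of the separation equals $\sup_{d\in A^{2}(-D,\bar z,k)}\langle k^{*},d\rangle$, which yields \eqref{5.1}; the closure in the definition of $A^{2}$ (equivalently $A^{2}=\overline{IT^{2}}$ by \eqref{ys0}) is exactly what permits this supremum to be strictly positive --- the envelope-like effect.

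\textbf{Step 2 (the disjointness --- the main obstacle).} This is the heart of the proof and the place where the nonlinear regularity model enters. Arguing by contradiction, suppose some $x_{0}\in IT^{2}(S,\bar x,u)$ has $\Delta_{(u,v,k)}(x_{0})$ meeting $(-\Int\overline Q)\times\mathcal{G}\times\{0\}$, i.e.\ there are $p\in D_{l}^{2}F_{+}(\bar x,\bar y,u,v)(x_{0})\cap(-\Int\overline Q)$, $q\in D_{l}^{2}G_{+}(\bar x,\bar z,u,k)(x_{0})\cap\mathcal{G}$ and $0\in D^{2}H(\bar x,0,u,0)(x_{0})$. From $0\in D^{2}H(\bar x,0,u,0)(x_{0})$ one obtains $\gamma_{n}\downarrow0$ and $\xi_{n}\to x_{0}$ with $d(0,H(\tilde x_{n}))=o(\gamma_{n}^{2})$, where $\tilde x_{n}:=\bar x+\gamma_{n}u+\frac{1}{2}\gamma_{n}^{2}\xi_{n}$; since $D_{l}^{2}F_{+}$ and $D_{l}^{2}G_{+}$ are inner (Liminf) derivatives, they adapt to this very sequence, giving $f_{n}\in F(\tilde x_{n})+\overline Q$ with $f_{n}=\bar y+\gamma_{n}v+\frac{1}{2}\gamma_{n}^{2}(p+o(1))$ and --- using $k\in-\overline{\cone}(D+\bz)$, the choice of $\mathcal{G}$, the additivity \eqref{ys} of $A^{2}(-D,\bar z,k)$, and the freedom in representing elements of $G_{+}=G+D$ --- keeping $0\in G_{+}(\tilde x_{n})$, while $x_{0}\in IT^{2}(S,\bar x,u)$ gives $\tilde x_{n}\in S$ for large $n$. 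Then the regularity of $\mathcal{H}$ at $(\bar x,0)$ with respect to $S$ (Definition~\ref{D5.1}), together with $\delta(0,\mathcal{H},\cdot)\le\theta\,d(0,H(\cdot))$ and the standing assumption $\limsup_{t\downarrow0}\mu(t)/t<\infty$, produces $x_{n}'\in\mathcal{H}_{0}\iv(0)\cap S=H\iv(0)\cap S$ with $d(x_{n}',\tilde x_{n})\le\mu\bigl(\theta\,d(0,H(\tilde x_{n}))\bigr)=o(\gamma_{n}^{2})$; being a second-order correction, this neither changes the first-order ray $u$ nor (thanks to the slack in $\mathcal{G}$) destroys the $F_{+}$- and $G_{+}$-estimates, so $x_{n}'\in\Omega$ and $F(x_{n}')\cap(\bar y-Q)\ne\emptyset$ for all large $n$, contradicting the local $Q$-solution property \eqref{5.3}. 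I expect the delicate points --- keeping the constraint $0\in G_{+}(\tilde x_{n})$ \emph{exactly} satisfied, and checking that the regularity correction is genuinely $o(\gamma_{n}^{2})$ (here the assumption $\limsup_{t\downarrow0}\mu(t)/t<\infty$ is indispensable) --- to be the principal obstacle.

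\textbf{Step 3 (the constraint qualification).} It remains to show $v^{*}\ne0$ under \eqref{5.2}. Suppose $v^{*}=0$; then $(k^{*},w^{*})\ne(0,0)$, and \eqref{5.1} reads $\langle k^{*},\xi\rangle+\langle w^{*},\eta\rangle\ge0$ for every $(\xi,\eta)\in\bigl(D_{l}^{2}G_{+}(\bar x,\bar z,u,k)-A^{2}(-D,\bar z,k),\,D^{2}H(\bar x,0,u,0)\bigr)(IT^{2}(S,\bar x,u))$. Since $k^{*}\in N(-D,\bar z)=\{z^{*}\in D^{*}\mid\langle z^{*},\bar z\rangle=0\}$, one has $\langle k^{*},d+\bar z\rangle=\langle k^{*},d\rangle\ge0$ for all $d\in D$, so $(k^{*},w^{*})$ is nonnegative on $\cone(D+\bz)\times\{0\}$ as well. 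Nonnegativity of a linear functional is preserved under conic combinations, hence $(k^{*},w^{*})$ is nonnegative on the entire left-hand side of \eqref{5.2}, that is on all of $Z\times W$, which forces $(k^{*},w^{*})=(0,0)$ --- a contradiction. Therefore $v^{*}\ne0$, completing the proof.
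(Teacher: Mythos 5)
Your overall architecture is the same as the paper's (a primal disjointness obtained by correcting the $H$-constraint via the regularity of $\mathcal{H}$, then a Hahn--Banach separation in $Y\times Z\times W$, then the conic argument for the constraint qualification --- your Step 3 is essentially the paper's argument verbatim, and your $o(\gamma_n^2)$ regularity correction is the right mechanism). But the central step fails as designed. By building $\mathcal{G}=A^{2}(-D,\bz,k)-\cone(D+\bz)$ into the separation set, you must prove that $P$ misses a set based on the \emph{adjacent} set $A^2$, and your contradiction argument cannot deliver this: the sequences $q_n\to q$ are dictated by the Liminf derivative $D_l^{2}G_{+}$ evaluated along the regularity-corrected points $x_n'$, whereas $q\in A^{2}(-D,\bz,k)-\cone(D+\bz)$ only guarantees that \emph{some} sequence $d_n\to d$ satisfies $\bz+\gamma_nk+\tfrac{1}{2}\gamma_n^{2}d_n\in-D$; nothing forces $\bz+\gamma_nk+\tfrac{1}{2}\gamma_n^{2}q_n\in-D$, so you cannot conclude $G(x_n')\cap(-D)\ne\emptyset$ and hence cannot contradict \eqref{5.3}. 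You name this "the principal obstacle" but do not resolve it, and the obstacle is real. The paper's missing idea is to run the disjointness (its Claims 1--3) with the \emph{interior} second-order set $IT^{2}(-D,\bz,k)$, which is robust under the change of approximating sequence forced by the regularity correction, and only afterwards, on the dual side, to replace $IT^{2}$ by $A^{2}$ for free via $A^{2}(-D,\bz,k)=\overline{IT^{2}(-D,\bz,k)}$ in \eqref{ys0} (the degenerate case $IT^{2}(-D,\bz,k)=\emptyset$, in which your separation set is empty and no separation is available at all, is then disposed of trivially since $A^{2}=\emptyset$ makes \eqref{5.1} vacuous).

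A second genuine gap concerns the orthogonalities. Your separation against $(-\Int\overline{Q})\times\mathcal{G}\times\{0\}$ yields $v^{*}\in Q^{*}$ and, using the recession directions $-\cone(D+\bz)$, $k^{*}\in N(-D,\bz)$ (note that "$k^{*}\in D^{*}$ and $\bz\in-D$" alone gives only $\langle k^{*},\bz\rangle\le0$, not $=0$); but it gives only $\langle v^{*},v\rangle\le0$ and $\langle k^{*},k\rangle\le0$. There is no "complementary first-order inequality" hidden in the critical-cone memberships \eqref{critical} that reverses these signs, so your assertion of the equalities is unsupported. The paper obtains them by building the critical directions into the separation geometry: the first component of its separating set is $-\cone(Q+v)$, so letting $q\to0$ in $Q$ yields $\langle v^{*},v\rangle\ge0$; on the second component it invokes \eqref{ys}, $A^{2}(-D,\bz,k)+T(T(-D,\bz),k)\subset A^{2}(-D,\bz,k)$, whose recession cone contains $-k$, yielding $\langle k^{*},k\rangle\ge0$. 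Your recession set $-\cone(D+\bz)$ does not contain $-k$ in general (that would require $k\in\cone(D+\bz)$, while $k\in-\overline{\cone}(D+\bz)$), so this information is genuinely absent from your setup and must be added, e.g., exactly as the paper does.
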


\begin{proof}
We split the proof into several claims.

\underline{Claim 1}.
\emph{$(\bar x,\bar y)$ satisfies the primal necessary condition:}
$$D_{l}^{2}F_{+}(\bar x,\bar y,u,v)(T^2(\Omega,\bar x,u))\cap (-\cone(Q+v))=\emptyset.$$

Indeed, by the definition of $Q$-solution, \eqref{5.3} holds true for some neighbourhood $U$ of $\bar x$.
Let $x \in T^{2}(\Omega,\bar x,u)$ and $y \in D_{l}^{2}F_{+}(\bar x,\bar y,u,v)(x)$.
Then, there are $\gamma_n\downarrow 0$, $x_n \to x$, and $y_n\to y$ such that $\bar x+\gamma_nu+\dfrac{1}{2}\gamma^2_n x_n \in U\cap\Omega$ for all $n \in \mathbb{N}$ and $\bar y+\gamma_nv+\dfrac{1}{2}\gamma^{2}_n y_n\in
F(\bar x+\gamma_nu+\dfrac{1}{2}\gamma^{2}_n x_n)+\overline Q$ for all sufficiently large $n$.
Thanks to \eqref{5.3}, we have $\gamma_nv+\dfrac{1}{2}\gamma^{2}_n y_n\notin-Q$, and consequently, $y\notin-\cone(Q+v)$.
\smallskip

\underline{Claim 2}.
\emph{The following lower estimate for $T^2(\Omega,\bx,u)$ holds true:}
\begin{multline*}
\{x \in IT^2(S,\bx,u)\mid
D_{l}^2G_{+}(\bx,\bz,u,k)(x) \cap IT^2(-D,\bz,k) \ne \emptyset,
\\
0\in D^2H(\bx,0,u,0)(x)\}
\subset T^2(\Omega,\bx,u).
\end{multline*}

Suppose $x\in IT^2(S,\bx,u)$, $z\in D_{l}^2G_{+}(\bx,\bz,u,k)(x)\cap IT^2(-D,\bz,k)$ and $0\in D^2H(\bx,0,u,0)(x)$.
As $0\in D^2H(\bx,0,u,0)(x)$, there are $\gamma_n\downarrow 0$, $x_n \to x$, and $w_n \to 0$ such that $\dfrac{1}{2}\gamma_n^{2}w_n \in H(\bx+\gamma_nu+\dfrac{1}{2}\gamma_n^2x_n)$ for all $n\in\N$.
As $x\in IT^2(S,\bx,u)$, it holds $\bx+\gamma_nu+\dfrac{1}{2}\gamma_n^2x_n \in S$ for sufficiently large $n$.
As $\mathcal{H}$ is regular at $(\bx,0)$ with respect to $S$, for large $n$, we have:
\begin{align*}
d(\bx+\gamma_nu+\dfrac{1}{2}\gamma_n^2x_n, \mathcal{H}^{-1}_{0}(0)\cap S) &\leq \mu(\delta(0,
\mathcal{H},\bx+\gamma_nu+\dfrac{1}{2}\gamma_n^2x_n))
\\&
\leq \mu(\theta d(0,H(\bx+\gamma_{n}u+\dfrac{1}{2}\gamma_n^{2}x_n)))
\leq\mu\left(\dfrac{\theta}{2}\gamma_n^{2}\|w_n\|\right).
\end{align*}
There exists a point $\hat x_n\in\hat {H}^{-1}_{0}(0)\cap S$ such that
$$
\|\bx+\gamma_nu+\dfrac{1}{2}\gamma_n^2x_n-\hat x_n\| \leq\mu\left(\dfrac{\theta}{2}\gamma_n^{2}\|w_n\|\right) +\ga_n^3.
$$
By setting $x_n':=(\frac{1}{2}\gamma_n^2)^{-1}(\hat x_n-\bar x-\gamma_nu)$, one has $\bx+\gamma_nu+\dfrac{1}{2}\gamma_n^2x_n'
\in \mathcal{H}^{-1}_{0}(0)\cap S$ and
$$\|x_n-x_n'\|\leq \dfrac{\mu\left(\dfrac{\theta}{2}\gamma_n^{2}\|w_n\|\right)} {\dfrac{1}{2}\gamma_n^2}+2\ga_n\to 0
\quad\mbox{as}\quad n\to\infty.$$
Hence, $x_n'\to x$ as $n\to\infty$.
As $z\in D_{l}^{2}G_{+}(\bx,\bz,u,k)(x)$, there exists $z_n \to z$ such that $\bz+\gamma_nk+\dfrac{1}{2}\gamma_n^{2}z_n \in
G(\bx+\gamma_nu+\dfrac{1}{2}\gamma_n^{2}x_n')+D$ for large $n$.
Moreover, as $z\in IT^{2}(-D,\bz,k)$, it holds $\bar z+\gamma_nk+\dfrac{1}{2}\gamma_n^{2}z_n \in -D$
for large $n$.
Hence,
$(G(\bx+\gamma_nu+\dfrac{1}{2}\gamma_n^{2}x_n')+D)\cap (-D)\ne \emptyset$ and, as $D$ is a convex cone, $G(\bx+\gamma_nu+\dfrac{1}{2}\gamma_n^{2}x_n')\cap (-D) \ne \emptyset$ for large $n$.
Thus, $\bx+\gamma_nu+\dfrac{1}{2}\gamma_n^{2}x_n' \in \Omega$ for large $n$, i.e., $x \in T^2(\Omega, \bx,u)$.

\underline{Claim 3}.
$\Delta_{(u,v,k)}(IT^{2}(S,\bx,u))\bigcap \big ((-\cone(Q+v))\times IT^{2}(-D,\bz,k) \times \{0\}\big)=\emptyset$.
\smallskip

Suppose to the contrary the existence of $x\in IT^{2}(S,\bx,u)$, $y\in-\cone(Q+v)$ and $z\in IT^{2}(-D,\bz,k)$ such that
$
(y,z,0)\in \Delta_{(u,v,k)}(x).
$
Then, by Claim 2, $x\in T^2(\Omega,\bx,u)$.
We arrive at a contradiction with Claim 1.
\smallskip

\underline{Claim 4}.
\emph{There exist multipliers $(v^*,k^*,w^* )\in Q^*\times N(-D,\bar z)\times W^* \setminus \{(0,0,0)\}$ such that $\langle v^* ,v \rangle=\langle k^* ,k \rangle =0$ and
\eqref{5.1} holds true.}
\smallskip

If $IT^{2}(-D,\bz,k)=\emptyset$, then $A^{2}(-D,\bz,k)=\emptyset$ and \eqref{5.1} holds true trivially.
Let $IT^{2}(-D,\bz,k)\ne\emptyset$.
The standard separation theorem applied to the two convex sets in Claim 3 yields the existence of multipliers $(v^*,k^*,w^*)\in Y^*\times Z^*\times W^* \setminus \{(0,0,0)\}$ such that
\begin{gather}\label{5.8}
\langle v^* ,y \rangle+ \langle k^* ,z \rangle+\langle w^*,w \rangle \geq
\langle v^* ,q\rangle+ \langle k^*,d\rangle
\end{gather}
for all $x \in IT^{2}(S,\bx,u)$, $(y,z,w)\in\Delta_{(u,v,k)}(x)$, $q\in-\cone(Q+v)$, and all $d \in IT^{2}(-D,\bz,k)$.
For any fixed admissible $x,y,z,w$ and $d$ and any $q\in\cone(Q+v)$ and $t>0$, one has $-tq\in-\cone(Q+v)$, and consequently,
$$
\langle v^*,q\rangle\ge\lim_{t\to\infty}\frac{\langle v^*,y\rangle+ \langle k^*,z\rangle+\langle w^*,w \rangle-\langle k^*,d\rangle}{t}=0.
$$
Hence,
\begin{gather}\label{5.7}
\langle v^*,q \rangle\ge0
\quad\mbox{for all}\quad
q\in\cone(Q+v),
\end{gather}
and consequently, taking into account the second property in \eqref{ys0}, inequality \eqref{5.8} implies \eqref{5.1}.

Since $Q$ is a cone, by the same argument, it follows from \eqref{5.7} that $v^*\in Q^*$.
As $v\in-\bd Q$, we also have $\langle v^*,v\rangle=0$.
Using \eqref{5.7} and property \eqref{ys} of the adjacent set, we obtain from \eqref{5.8} that
\begin{gather*}
\langle v^* ,y \rangle+ \langle k^* ,z \rangle+\langle w^*,w \rangle \geq
\langle k^* ,d\rangle+ \langle k^*,d'\rangle
\end{gather*}
for all $x \in IT^{2}(S,\bx,u)$, $(y,z,w)\in\Delta_{(u,v,k)}(x)$, $d \in A^{2}(-D,\bz,k)$, and all $d' \in T(T(-D,\bz),k)$.
Using the fact that $T(T(-D,\bz),k)$ is a cone, we conclude as before that $k^* \in -(T(T(-D,\bz),k))^*$, and consequently, $k^*\in N(-D,\bz)$.
As $k\in T(-D,\bz)$, we also have $\langle k^*,k\rangle=0$.
\smallskip

\underline{Claim 5}.
\emph{Under the constraint qualification \eqref{5.2}, $v^*$ in \eqref{5.1} is nonzero.}
\smallskip

Suppose that $v^*=0$.
Then, $(k^*,w^*)\neq(0,0)$ and \eqref{5.1} gives
\begin{align}\label{5.9}
\langle k^* ,z \rangle+\langle w^* ,w \rangle\geq \langle k^*, d \rangle
\end{align}
for all  $x \in IT^{2}(S,\bx,u)$, $z\in D_{l}^{2}G_{+}(\bx,\bz,u,k)(x)$,
$w\in D^{2}H(\bx,0,u,0)(x)$ and $d \in A^{2}(-D,\bz,k)$.
Take arbitrarily $(z',w')\in Z\times W$.
By virtue of \eqref{5.2}, there are $x\in IT^2(S,\bx,u)$, $z\in D_l^{2}G_{+}(\bx,\bz,u,k)(x)$, $w\in D^{2}H(\bx,0,u,0)(x)$, $d\in A^2(-D,\bar z,k)$, $d'\in D$ and
$\gamma_1, \gamma_2>0$
such that $(z',w')=\gamma_1 (z-d,w) +(\gamma_2(d'+\bz),0)$.
Since $k^* \in N(-D,\bz)$, one has $\langle k^*, d'\rangle \geq 0$ and $\langle k^*, \bz\rangle=0$.
Hence, using \eqref{5.9},
\begin{align*}
\langle k^*, z' \rangle+\langle w^*, w'\rangle &=\gamma_1\langle k^*,z-d\rangle +\gamma_2\langle k^*,d'+\bz\rangle +\gamma_1 \langle w^*,w\rangle
\\
&= \gamma_1 (\langle k^*, z\rangle +\langle w^*, w\rangle -\langle k^*, d\rangle)+\gamma_2\langle k^*, d'+\bz\rangle
\\
&\ge\gamma_2\langle k^*, d'+\bz\rangle\ge0.
\end{align*}
As $(z',w')\in Z\times W$ is arbitrary, we have $(k^*,w^*)=(0,0)$, a contradiction.
\end{proof}

\begin{remark}
{\rm 1}.
The requirements on the extension mapping $\mathcal{H}$ formulated before Definition~\ref{D5.1} are satisfied, e.g., if
$$e(\mathcal{H}(x,t),H(x)):=\sup_{h\in\mathcal{H}(x,t)}d(h,H(x))\leq \alpha t^k$$
for some $\alpha>0$, $k\geq 1$ and all $(x,t)$ in a neighbourhood of $(\bx,0)$.

{\rm 2}. The lower estimate for $T^2(\Omega,\bx,u)$ in Claim 2  and its proof presented above  are valid for any feasible triple $(\bx,\by,\bz)$ and any $u \in X$ with $0\in DH(\bx,0)(u)$ and $k \in D_{l}G_{+}(\bx,\bz)(u)$.
This estimate can be of importance beyond Theorem~\ref{T5.2}.

{\rm 3}. In the proof of Theorem~\ref{T5.2} (see Claim 2), one can employ weaker regularity properties of the extension mapping $\mathcal{H}$ than the one given in Definition~\ref{D5.1}.
Firstly, it is sufficient to require the inequality in \eqref{D5.1-1} to hold only at the fixed point $w=0$.
This important property known as \emph{metric subregularity} can be treated in the abstract setting of the current article and is going to make the topic of subsequent research.
Moreover, only points of the form $\bx+\gamma_nu+\dfrac{1}{2}\gamma_n^2x_n$ are involved in the proof.
Hence, a development of our regularity model corresponding to \emph{directional metric subregularity} is on the agenda.
Such an extension is going to properly improve \cite[Theorem~3.1]{KhaTun15}.

{\rm 4}. Following \cite{KhaTun15}, one can improve Theorem~\ref{T5.2} by relaxing the restrictive assumption of nonemptyness of the interior of the set $\Delta_{(u,v,k)}(IT^{2}(S,\bar x,u))$.

{\rm 5}. It is possible to develop multiplier rules similar to the one in Theorem~\ref{T5.2} in terms of other types of generalized derivatives, for instance asymptotic derivatives, instead of the contingent-type ones.
Such rules may be useful when the contingent-type derivatives do not exist in a particular problem under consideration.
\end{remark}

\section{Concluding remarks}\label{ConRem}

This article considers a general regularity model for a set-valued mapping $F:X\times\R_+\rightrightarrows Y$, where $X$ and $Y$ are metric spaces.
We demonstrate that the classical approach going back to Banach, Schauder, Lyusternik and Graves and based on iteration procedures still possesses potential.
In particular, we show that the \emph{Induction theorem} \cite[Theorem~1]{Kha86}, which was used as the main tool when proving the other results in \cite{Kha86}, implies also all the main results in the subsequent articles \cite{Kha88,Kha89} and
can serve as a substitution of the Ekeland variational principle when establishing other regularity criteria.
Furthermore, the latter classical result can also be established as a consequence of the Induction theorem.

This research prompts a list of questions and problems which should be taken care of.

1) ``On a set'' nonlinear regularity, considered in Section~\ref{MET_SEC} and interpreted there as a direct analogue of metric regularity in the conventional setting, is in fact a general model which covers also relaxed versions of regularity like sub- and semi-regularity.

2) The particular case of ``power nonlinearities'', i.e., the case when functional modulus $\mu$ is of the type $\mu(t)=\lambda t^k$ with $0<k\le1$, should be treated explicitly.

3) Theorem \ref{T2.3} illustrates the usage of the Induction theorem as a substitution for the Ekeland variational principle when establishing regularity criteria like Theorem~\ref{T6.4}.
In the last theorem which is an (indirect) consequence of Theorem \ref{T2.3}, the mapping is assumed upper semicontinuous.
This assumption can be relaxed with the help of a slightly more advanced version of Theorem \ref{T2.3}.

4) The regularity model studied in this article is illustrated in Section~\ref{S5} by an application to second-order necessary optimality conditions for a nonsmooth set-valued optimization problem with mixed constraints.
Other classes of optimization problems can be handled along the same lines using also other types of generalized derivatives.
The relaxed versions of regularity mentioned in item 1 above are going to be useful in this context.

\section*{Acknowledgements}
We thank the two anonymous referees for the very careful reading of the manuscript and many helpful comments and suggestions which helped us substantially improve the presentation.

\end{document}